\title{The Tits alternative for the automorphism group of a free product}
\author{Camille Horbez}
\begin{document}
\maketitle
\newtheorem{de}{Definition} [section]
\newtheorem{theo}[de]{Theorem} 
\newtheorem{prop}[de]{Proposition}
\newtheorem{lemma}[de]{Lemma}
\newtheorem{cor}[de]{Corollary}
\newtheorem{propd}[de]{Proposition-Definition}

\theoremstyle{remark}
\newtheorem{rk}[de]{Remark}
\newtheorem{ex}[de]{Example}
\newtheorem{question}[de]{Question}

\normalsize

\addtolength\topmargin{-.5in}
\addtolength\textheight{1.in}
\addtolength\oddsidemargin{-.045\textwidth}
\addtolength\textwidth{.09\textwidth}

\begin{abstract}
Let $G=G_1\ast\dots\ast G_k\ast F$ be a countable group which splits as a free product, where all groups $G_i$ are freely indecomposable and not isomorphic to $\mathbb{Z}$, and $F$ is a finitely generated free group. If for all $i\in\{1,\dots,k\}$, both $G_i$ and its outer automorphism group $\text{Out}(G_i)$ satisfy the Tits alternative, then $\text{Out}(G)$ satisfies the Tits alternative. As an application, we prove that the Tits alternative holds for outer automorphism groups of right-angled Artin groups, and of torsion-free groups that are hyperbolic relative to a finite family of virtually polycyclic groups.
\end{abstract}

\section*{Introduction}

In his celebrated 1972 paper \cite{Tit72}, Tits proved that any subgroup of a finitely generated linear group over an arbitrary field is either virtually solvable, or contains a rank two free subgroup. This dichotomy has since been shown to hold for various classes of groups, such as hyperbolic groups (Gromov \cite{Gro87}), mapping class groups of compact surfaces (Ivanov \cite{Iva84}, McCarthy \cite{McC85}), outer automorphism groups $\text{Out}(F_N)$ of finitely generated free groups (Bestvina, Feighn and Handel \cite{BFH00,BFH05}), groups acting freely and properly on a CAT(0) cube complex (Sageev and Wise \cite{SW05}), the group of polynomial automorphisms of $\mathbb{C}^2$ (Lamy \cite{Lam01}), groups of bimeromorphic automorphisms of compact complex Kähler manifolds (Oguiso \cite{Ogu06}), groups of birational transformations of compact complex Kähler surfaces (Cantat \cite{Can11}).

For the four first classes of groups mentioned above, as well as in Oguiso's theorem, a slightly stronger result than Tits' actually holds, since virtually solvable subgroups can be shown to be finitely generated and virtually abelian, with a bound on the index of the abelian subgroup (see \cite{BLM83} for the mapping class group case, see \cite{Ali02, BFH04} for the $\text{Out}(F_N)$ case, see \cite{BH99} for the case of groups acting on a CAT(0) cube complex). 

\begin{de}
A group $G$ \emph{satisfies the Tits alternative} if every subgroup of $G$ (finitely generated or not) is either virtually solvable, or contains a rank two free subgroup. 
\end{de}

More generally, we will make the following definition. The classical Tits alternative corresponds to the case where $\mathcal{C}$ is the class of virtually solvable groups.

\begin{de}
Let $\mathcal{C}$ be a collection of groups. A group $G$ \emph{satisfies the Tits alternative relative to $\mathcal{C}$} if every subgroup of $G$ either belongs to $\mathcal{C}$, or contains a rank two free subgroup.
\end{de} 

It is often interesting to show stability results for the Tits alternative: when a group $G$ is built in some way out of simpler subgroups $G_i$, it is worth knowing that one can deduce the Tits alternative for $G$ from the Tits alternative for the $G_i$'s. The Tits alternative is known to be stable under some basic group-theoretic constructions, such as passing to subgroups or to finite index supergroups; it is also stable under extensions -- we insist that it is important here to allow for subgroups of $G$ that are not finitely generated in the definition of the Tits alternative. Antolín and Minasyan established stability results of the Tits alternative for graph products of groups \cite{AM13}.
\\
\\
\indent Our main result is about deducing the Tits alternative for the outer automorphism group of a free product of groups $G_i$, under the assumption that all groups $G_i$ and $\text{Out}(G_i)$ satisfy it. A celebrated theorem of Grushko \cite{Gru40} states that any finitely generated group $G$ splits as a free product of the form $$G=G_1\ast\dots\ast G_k\ast F,$$ where for all $i\in\{1,\dots,k\}$, the group $G_i$ is nontrivial, not isomorphic to $\mathbb{Z}$, and freely indecomposable, and $F$ is a finitely generated free group. This \emph{Grushko decomposition} is unique in the sense that both the number $k$ of indecomposable factors, and the rank of the free group $F$, are uniquely determined by $G$, and the conjugacy classes of the freely indecomposable factors are also uniquely determined, up to permutation. 

Our main result reduces the study of the Tits alternative of the outer automorphism group of any finitely generated group to that of its indecomposable pieces. It answers a question of Charney and Vogtmann, who were interested in the Tits alternative for outer automorphisms of right-angled Artin groups.

\begin{theo}\label{Tits-intro}
Let $G$ be a finitely generated group, and let $$G:=G_1\ast\dots\ast G_k\ast F$$ be the Grushko decomposition of $G$. Assume that for all $i\in\{1,\dots,k\}$, both $G_i$ and $\text{Out}(G_i)$ satisfy the Tits alternative.\\ Then $\text{Out}(G)$ satisfies the Tits alternative.
\end{theo}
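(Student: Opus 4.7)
I would argue by induction on the Grushko complexity of $G$, taken as the lexicographic pair $(k,\text{rk}(F))$. The base cases are $k=0$, so that $G=F_N$, which is handled by the Bestvina--Feighn--Handel theorem, and $(k,\text{rk}(F))=(1,0)$, in which case $\text{Out}(G)=\text{Out}(G_1)$ satisfies the Tits alternative by hypothesis. By the uniqueness of the Grushko decomposition, any $\Phi\in\text{Out}(G)$ permutes the conjugacy classes $\{[G_i]\}$, so the subgroup $\text{Out}^0(G)$ fixing each $[G_i]$ has finite index; since the Tits alternative passes to subgroups and to finite-index supergroups, it suffices to verify it for an arbitrary subgroup $H\leq\text{Out}^0(G)$.

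The main tool is the action of $\text{Out}^0(G)$ on a Gromov-hyperbolic complex $\mathcal{X}$ attached to the free product decomposition---typically a relative free factor graph for $G=G_1\ast\cdots\ast G_k\ast F$, whose hyperbolicity extends results of Bestvina--Feighn and Handel--Mosher. I would then establish a dichotomy: either $H$ contains two loxodromic isometries with pairwise disjoint fixed-point pairs on $\partial\mathcal{X}$, and a ping-pong argument produces a rank-two free subgroup of $H$; or every element of $H$ acts non-loxodromically on $\mathcal{X}$, in which case an accessibility argument yields an $H$-invariant proper free factor system $\mathcal{F}$ strictly refining the Grushko decomposition.

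In the latter case, a finite-index subgroup of the stabilizer of $\mathcal{F}$ fits into a short exact sequence
\[
1 \to K \to \text{Stab}(\mathcal{F}) \to \prod_j \text{Out}(G'_j;\,\mathcal{F}|_{G'_j}) \to 1,
\]
where the $G'_j$ are the vertex groups of the splitting realising $\mathcal{F}$, each of strictly smaller Grushko complexity. Under a suitably strengthened induction hypothesis covering \emph{relative} outer automorphism groups, each factor on the right then satisfies the Tits alternative; the kernel $K$, generated by partial conjugations and twists supported on the edges of the splitting, should be analysed via its action on the associated Bass--Serre tree. Combining these ingredients with the stability of the Tits alternative under group extensions closes the induction.

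The main obstacle I anticipate is twofold. First, one must formulate and prove a suitably \emph{relative} version of Theorem~\ref{Tits-intro}, for outer automorphisms preserving a peripheral free factor system, so that the induction can be applied to the vertex groups $G'_j$ with their induced structure; maintaining the right strengthened hypothesis throughout the induction is the crux of the argument. Second, because the Tits alternative is required for all subgroups and not merely finitely generated ones, both the ping-pong construction and the production of the invariant free factor system $\mathcal{F}$ in the non-loxodromic case must be carried out uniformly in $H$, without any finite-generation reduction on $H$.
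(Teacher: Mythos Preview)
Your overall architecture is right: pass to a relative statement, argue by induction on a rank-type complexity, and use an action on a hyperbolic graph to split off the case where $H$ contains a nonabelian free subgroup. Two points, however, separate your outline from a working proof.

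First, the dichotomy you state is not exhaustive and its second branch does not hold. Gromov's classification for groups acting on a hyperbolic space $\mathcal{X}$ gives, besides the general-type case, the possibilities that $H$ is lineal or focal (loxodromics exist but share a boundary fixed point), horocyclic, or bounded. In none of these cases does one obtain an $H$-invariant proper free factor for free. A boundary point of the relevant complex corresponds to an \emph{arational} (or $\mathcal{Z}$-averse) tree, and the stabilizer of such a point need not preserve any proper free factor; this is precisely the case the paper isolates as a separate third branch of a \emph{trichotomy}. That branch is handled not by induction on free factors but by Guirardel--Levitt's structure theorem for the stabilizer of a tree with trivial arc stabilizers in $\partial P\mathcal{O}(G,\mathcal{F})$, which exhibits a finite-index subgroup embedding into a product of quotients of point stabilizers. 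Your sketch has no analogue of this step, and the ``accessibility argument'' you invoke cannot manufacture an invariant free factor out of an arational limit.

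Second, even within the bounded-orbit case you need a genuine argument: $\mathcal{X}$ is not locally finite, so bounded orbits do not yield a finite orbit of vertices. The paper handles this by pushing a $\mu$-stationary measure on the compactification $\overline{P\mathcal{O}(G,\mathcal{F})}$ to the Gromov boundary of the cyclic-splitting graph $FZ(G,\mathcal{F})$ (whose boundary is described, unlike that of the free factor complex), forcing any nonelementary $H$ to have a limit point in $\partial_\infty FZ$. This is where the ``uniform in $H$, no finite generation'' difficulty you flag is actually resolved.

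A minor further remark: taking Bestvina--Feighn--Handel as the $k=0$ base case is logically fine, but the paper's proof is self-contained and in particular \emph{reproves} the $\text{Out}(F_N)$ case; with your base case you would not get that. Also, the induction must be set up on the pair $(\text{rk}_K(G,\mathcal{F}),\text{rk}_f(G,\mathcal{F}))$ for the relative statement, rather than on the Grushko pair $(k,\text{rk}(F))$, so that the complexity genuinely drops when one passes to $(G,\mathcal{F}')$ and to $(G',\mathcal{F}_{G'})$ after finding an invariant proper free factor $G'$.
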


Again, we insist on the fact that when we assume that the groups $G_i$ and $\text{Out}(G_i)$ satisfy the Tits alternative, it is important to consider all their subgroups (finitely generated or not) in the definition of the Tits alternative, even if we are only interested in establishing this alternative for finitely generated subgroups of $\text{Out}(G)$. 

Under the assumptions of Theorem \ref{Tits-intro}, since the Tits alternative is stable under extensions, the full automorphism group $\text{Aut}(G)$ also satisfies the Tits alternative. When $k=0$, we get a new, shorter proof of the Tits alternative for the outer automorphism group $\text{Out}(F_N)$ of a finitely generated free group, that was originally established by Bestvina, Feighn and Handel \cite{BFH00,BFH05}. In particular, this gives a new proof of the Tits alternative for the mapping class group of a compact surface with nonempty boundary.

More generally, if $\mathcal{C}$ is a collection of groups that is stable under isomorphisms, contains $\mathbb{Z}$, and is stable under passing to subgroups, to extensions, and to finite index supergroups, we show that $\text{Out}(G)$ satisfies the Tits alternative relative to $\mathcal{C}$, as soon as all $G_i$ and $\text{Out}(G_i)$ do, see Theorem \ref{Tits}. This applies for example to the class of virtually polycyclic groups. Bestvina, Feighn and Handel actually proved the Tits alternative for $\text{Out}(F_N)$ relative to the collection of all abelian groups \cite{BFH04}, which does not follow from our main result. More generally, it would be of interest to know whether the version of Theorem \ref{Tits-intro} relative to the class of abelian groups holds.
\\
\\
\indent Theorem \ref{Tits-intro} can be applied to prove the Tits alternative for outer automorphism groups of various interesting classes of groups. In \cite{CV11}, Charney and Vogtmann proved the Tits alternative for the outer automorphism group of a right-angled Artin group $A_{\Gamma}$ associated to a finite simplicial graph $\Gamma$, under a homogeneity assumption on $\Gamma$. As noticed in \cite[Section 7]{CV11}, Theorem \ref{Tits-intro} enables us to remove this assumption. This was Charney and Vogtmann's original motivation for asking the question about the Tits alternative for the outer automorphism group of a free product. Basically, when $\Gamma$ is disconnected, the group $A_{\Gamma}$ splits as a free product of the subgroups $A_{\Gamma_i}$ associated to its connected components, and Theorem \ref{Tits-intro} enables us to argue by induction on the number of vertices of $\Gamma$, using Charney and Vogtmann's results from \cite{CV11}.

\begin{theo} \label{Tits-raag}
For all finite simplicial graphs $\Gamma$, the group $\text{Out}(A_{\Gamma})$ satisfies the Tits alternative.
\end{theo}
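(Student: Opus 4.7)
The plan is to argue by induction on the number of vertices $n$ of $\Gamma$. For $n \leq 1$, the group $A_\Gamma$ is trivial or infinite cyclic, so $\text{Out}(A_\Gamma)$ is finite and the conclusion is immediate. For the inductive step, I would split into two cases depending on whether $\Gamma$ is connected.

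Suppose first that $\Gamma$ is disconnected, with connected components $\Gamma_1,\dots,\Gamma_m$. Then $A_\Gamma \cong A_{\Gamma_1} \ast \dots \ast A_{\Gamma_m}$. Each factor is a RAAG on strictly fewer vertices, so by the inductive hypothesis each $\text{Out}(A_{\Gamma_i})$ satisfies the Tits alternative; and the groups $A_{\Gamma_i}$ themselves satisfy the Tits alternative, since they act freely and cocompactly on their Salvetti complex, to which \cite{SW05} applies. Since a RAAG associated to a connected graph with at least two vertices is freely indecomposable, regrouping the single-vertex components into a free group yields the Grushko decomposition of $A_\Gamma$, to which Theorem \ref{Tits-intro} applies and produces the Tits alternative for $\text{Out}(A_\Gamma)$.

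Suppose now that $\Gamma$ is connected. Here I would invoke the work of Charney and Vogtmann \cite{CV11}, which establishes the Tits alternative for $\text{Out}(A_\Gamma)$ under an additional \emph{homogeneity} assumption on $\Gamma$. Their argument proceeds by induction, reducing the problem to establishing the Tits alternative for the outer automorphism groups of RAAGs associated to certain proper subgraphs of $\Gamma$; the homogeneity hypothesis is used precisely to ensure that these subgraphs remain connected so that the induction applies. With the disconnected case handled above, the induction of \cite{CV11} now goes through without the homogeneity assumption, for arbitrary connected $\Gamma$. The main obstacle is therefore not new technical content but rather the careful interfacing of Theorem \ref{Tits-intro} with the inductive scheme of \cite{CV11}, a combination which is explicitly anticipated in Section 7 of that paper.
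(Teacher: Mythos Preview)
Your proposal is correct and follows essentially the same strategy as the paper: induction on the number of vertices of $\Gamma$, handling the disconnected case via the Grushko decomposition of $A_\Gamma$ and Theorem~\ref{Tits-intro}, and handling the connected case by appealing to the Charney--Vogtmann machinery. The paper's proof differs only in that it unpacks the connected case explicitly rather than treating \cite{CV11} as a black box: it uses the projection morphism $P:\text{Out}^0(A_\Gamma)\to\prod_{[v]}\text{Out}^0(A_{lk[v]})$ from \cite{CV09}, whose kernel is free abelian (Proposition~\ref{amalgamated-projection}) when there are at least two maximal classes, and in the single-maximal-class case invokes Tits' original theorem for $GL(A_{[v]})$ (Proposition~\ref{single-maximal}). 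Your explicit justification that each $A_{\Gamma_i}$ itself satisfies the Tits alternative (via \cite{SW05}) is a point the paper leaves implicit.
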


Theorem \ref{Tits-intro} also applies to the outer automorphism group of a torsion-free group $G$ that is hyperbolic relative to a finite collection $\mathcal{P}$ of virtually polycyclic subgroups. Indeed, it enables to restrict to the case where $G$ is freely indecomposable relative to $\mathcal{P}$, i.e. $G$ does not split as a free product of the form $G=A\ast B$, where all subgroups in $\mathcal{P}$ are conjugate into either $A$ or $B$. In the freely indecomposable case, the group of outer automorphisms of $G$ was described by Guirardel and Levitt as being built out of mapping class groups and subgroups of linear groups \cite{GL14}.

\begin{theo} \label{Tits-relhyp}
Let $G$ be a torsion-free group that is hyperbolic relative to a finite collection of virtually polycyclic subgroups. Then $\text{Out}(G)$ satisfies the Tits alternative.
\end{theo}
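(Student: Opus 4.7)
My plan is to reduce the problem via Theorem~\ref{Tits-intro} to the case where $G$ is freely indecomposable, and then to appeal to the Guirardel--Levitt structure theorem \cite{GL14} for the outer automorphism group of a freely indecomposable relatively hyperbolic group. Consider the Grushko decomposition $G = G_1\ast\cdots\ast G_k\ast F$. Because every $P\in\mathcal{P}$ is virtually polycyclic it is freely indecomposable, and since parabolic subgroups are elliptic in every free splitting of a relatively hyperbolic group, each $P$ is, up to conjugation, contained in exactly one factor $G_i$. Consequently each $G_i$ inherits the structure of a torsion-free group that is hyperbolic relative to a finite family $\mathcal{P}_i$ of virtually polycyclic subgroups, and is freely indecomposable relative to $\mathcal{P}_i$. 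Applying Theorem~\ref{Tits-intro} reduces the problem to verifying the Tits alternative for each $G_i$ and for each $\text{Out}(G_i)$.

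For $G_i$ itself, the convergence action of $G_i$ on its Bowditch boundary yields the Tits alternative: any non-elementary subgroup contains a rank-two free subgroup by Tukia's theorem, while any elementary subgroup is either virtually cyclic or conjugate into a parabolic, and hence is virtually polycyclic (in particular virtually solvable). For $\text{Out}(G_i)$, I would invoke \cite{GL14}: since $G_i$ is freely indecomposable relative to $\mathcal{P}_i$, the finite-index subgroup $\text{Out}(G_i;\mathcal{P}_i)$ of outer automorphisms preserving the peripheral structure fits in a finite sequence of extensions whose building blocks are mapping class groups of compact surfaces (arising from the quadratically hanging vertices of the canonical JSJ decomposition of $G_i$ over elementary subgroups), outer automorphism groups of the virtually polycyclic parabolic subgroups (which embed via Auslander's theorem into some $GL_n(\mathbb{Z})$), and finitely generated abelian twist groups along the edges. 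Each building block satisfies the Tits alternative---by Ivanov--McCarthy, by Tits' original theorem, and trivially, respectively---and since the Tits alternative is stable under extensions and under passage to finite-index supergroups, $\text{Out}(G_i)$ satisfies it, completing the reduction.

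The main obstacle lies in the careful application of \cite{GL14}: one must read off from their description a precise list of the pieces appearing as extension kernels and quotients, and in particular verify that the action of $\text{Out}(G_i;\mathcal{P}_i)$ on each non-elementary, non-QH vertex group of the canonical JSJ is controlled (through a virtually polycyclic or arithmetic image) by the relative JSJ rigidity machinery, and that edge twists contribute only finitely generated abelian factors. A secondary subtlety is that throughout we need the Tits alternative in its strong form, applying to arbitrary, possibly infinitely generated subgroups---this is what Theorem~\ref{Tits-intro} requires of the factors and what we must deliver in the conclusion---so one must invoke the statements of Tits, Ivanov--McCarthy, and Auslander in their full generality, which is fortunately available.
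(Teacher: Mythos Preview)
Your approach is essentially the same as the paper's: reduce via the main theorem to the freely indecomposable factors, then invoke Guirardel--Levitt's structure theorem together with the Tits alternative for mapping class groups and Auslander's embedding of $\text{Out}(P)$ into $GL_n(\mathbb{Z})$ for virtually polycyclic $P$. The paper is slightly more careful on two points you gloss over: it first discards virtually cyclic parabolics (so that each remaining $P$ is one-ended and hence genuinely elliptic in every free splitting---your blanket claim that ``parabolic subgroups are elliptic in every free splitting of a relatively hyperbolic group'' fails for $P\cong\mathbb{Z}$), and it justifies that $\text{Out}(G_i,\mathcal{P}_i)$ has finite index in $\text{Out}(G_i)$ by characterizing the non-virtually-cyclic parabolics intrinsically as the maximal subgroups that are neither virtually cyclic nor contain a nonabelian free group.
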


More generally, if $G$ is a torsion-free group that is hyperbolic relative to a finite family of finitely generated parabolic subgroups, we show that if all parabolic subgroups, as well as their outer automorphism groups, satisfy the Tits alternative, then the subgroup of $\text{Out}(G)$ made of those automorphisms that preserve the conjugacy classes of all parabolic subgroups also satisfies the Tits alternative. We refer to Theorem \ref{tits-rh} for a precise statement.
\\
\\
\indent We now describe the main ideas in our proof of Theorem \ref{Tits-intro}. In the case of the mapping class group $\text{Mod}(S)$ of a compact surface $S$, one way of proving the Tits alternative is to start by proving the following trichotomy: every subgroup $H\subseteq\text{Mod}(S)$ either 

\begin{itemize}
\item contains two pseudo-Anosov diffeomorphisms of $S$ that generate a rank two free subgroup of $H$, or 
\item is virtually cyclic, virtually generated by a pseudo-Anosov diffeomorphism, or
\item virtually fixes the isotopy class of a simple closed curve on $S$. 
\end{itemize}

\noindent This trichotomy was proved by Ivanov in \cite{Iva92}, and independently by McCarthy and Papadopoulos in \cite{McP89}. They started by proving that every subgroup of $\text{Mod}(S)$ either contains a pseudo-Anosov diffeomorphism, or virtually fixes the isotopy class of a simple closed curve on $S$, before studying subgroups of $\text{Mod}(S)$ that contain a pseudo-Anosov diffeomorphism. Once the above trichotomy is established, a second step in the proof of the Tits alternative consists in arguing by induction, in the case where $H$ preserves the isotopy class of a simple closed curve $\gamma$. In this case, by cutting $S$ along $\gamma$, we get a collection of subsurfaces. The Tits alternative is proved by induction, by considering the restrictions of the diffeomorphisms in $H$ to these subsurfaces.
\\
\\
\indent Our proof of Theorem \ref{Tits-intro} follows the same strategy. For the inductive step, we will need to work with decompositions of $G$ into free products that are not necessarily equal to the Grushko decomposition. From now on, we let $G$ be a countable group that splits as a free product of the form $$G:=G_1\ast\dots\ast G_k\ast F,$$ where $F$ is a finitely generated free group, and all $G_i$ are nontrivial. We do not require this decomposition to be the Grushko decomposition of $G$: some factors $G_i$ can be equal to $\mathbb{Z}$, or be freely decomposable. We actually do not even require $G$ to be finitely generated: some $G_i$ might be infinitely generated (however the number $k$ of factors arising in the splitting is finite, and $F$ is finitely generated). We denote by $\mathcal{F}:=\{[G_1],\dots,[G_k]\}$ the finite set of all $G$-conjugacy classes of the $G_i$'s, which we call a \emph{free factor system} of $G$. We denote by $\text{Out}(G,\mathcal{F})$ the subgroup of $\text{Out}(G)$ made of those outer automorphisms of $G$ that send each $G_i$ to a conjugate. Theorem \ref{Tits-intro} is a particular case of the following version, which is suitable for our inductive arguments. 

\begin{theo}\label{theo-Tits-2}
Let $G$ be a countable group, and let $\mathcal{F}$ be a free factor system of $G$. Assume that for all $i\in\{1,\dots,k\}$, both $G_i$ and $\text{Out}(G_i)$ satisfy the Tits alternative relative to $\mathcal{C}$, where $\mathcal{C}$ is a collection of groups that is stable under isomorphisms, contains $\mathbb{Z}$, and is stable under subgroups, extensions, and passing to finite index supergroups.\\ Then $\text{Out}(G,\mathcal{F})$ satisfies the Tits alternative relative to $\mathcal{C}$. 
\end{theo}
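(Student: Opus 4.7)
The plan is to transplant the Ivanov--McCarthy--Papadopoulos trichotomy for mapping class groups to the present setting. The analog of an essential simple closed curve is a proper free factor system $\mathcal{F}'$ strictly refining $\mathcal{F}$ (equivalently, a one-edge free $(G,\mathcal{F})$-splitting), and the analog of a pseudo-Anosov is a \emph{fully irreducible} element of $\text{Out}(G,\mathcal{F})$, meaning one that virtually preserves no such $\mathcal{F}'$. First I would set up a Guirardel--Levitt-style outer space for $(G,\mathcal{F})$ together with a relative free-factor graph, establish Gromov-hyperbolicity of the latter, and identify its loxodromic isometries with fully irreducibles.

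The structural core is then the following trichotomy: any subgroup $H\leq\text{Out}(G,\mathcal{F})$ either (a) contains a rank-two free subgroup generated by two independent fully irreducibles, or (b) is virtually cyclic, virtually generated by a fully irreducible, or (c) virtually fixes some proper free factor system $\mathcal{F}'$ properly refining $\mathcal{F}$. Case (a) directly produces the required $F_2$; case (b) lies in $\mathcal{C}$ since $\mathbb{Z}\in\mathcal{C}$ and $\mathcal{C}$ is closed under finite-index supergroups. The proof of the trichotomy follows the classical two-step pattern: first show that if $H$ does not virtually fix any such $\mathcal{F}'$ then $H$ contains a fully irreducible (using the action on the hyperbolic complex and standard Gromov-boundary arguments); then run a ping-pong argument on an appropriate compactification of the relative outer space, exploiting north--south dynamics of fully irreducibles, to distinguish (a) from (b).

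For case (c), I would argue by induction on the pair $(k,\text{rk}(F))$ in lexicographic order. The base case is $k=1$ with $F$ trivial, where $\text{Out}(G,\mathcal{F})=\text{Out}(G_1)$ satisfies the Tits alternative relative to $\mathcal{C}$ by hypothesis. In the inductive step, replace $H$ by its finite-index subgroup $H'$ fixing $\mathcal{F}'$. The stabilizer of $\mathcal{F}'$ in $\text{Out}(G,\mathcal{F})$ fits into a short exact sequence whose quotient embeds (up to finite index) into a product of groups of the form $\text{Out}(G'_j,\mathcal{F}'_j)$ attached to the vertex groups of the Bass--Serre tree of the refinement, and whose kernel is a group of twists supported on the edges of that tree. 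The $\text{Out}(G'_j,\mathcal{F}'_j)$ factors have strictly smaller complexity and satisfy the Tits alternative relative to $\mathcal{C}$ by induction; closure of $\mathcal{C}$ under subgroups, extensions, and finite-index supergroups then transfers the alternative up to $H'$, and hence to $H$.

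The hard part will be establishing the trichotomy, and within it three specific points: (i) hyperbolicity of the relative free-factor graph and identification of its loxodromics as the fully irreducibles of $(G,\mathcal{F})$; (ii) north--south dynamics of fully irreducibles on a suitable boundary of the relative outer space, which is what makes the ping-pong work; and (iii) scrupulous handling of non--finitely-generated subgroups, since any reduction to a finite generating set is forbidden --- this is precisely why the class $\mathcal{C}$ must be closed under arbitrary subgroups. A secondary technical task, needed in case (c) of the induction, is to verify that the twist subgroup itself satisfies the Tits alternative relative to $\mathcal{C}$; this should reduce to showing it is built out of abelian groups and vertex-group pieces of lower complexity, and hence handled by the extension-stability of $\mathcal{C}$.
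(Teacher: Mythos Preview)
Your overall architecture---trichotomy plus induction---matches the paper's, but the trichotomy you state is not the one the paper proves, and the discrepancy hides two genuine difficulties.

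The paper does not work with the relative free-factor complex but with the cyclic-splitting complex $FZ(G,\mathcal{F})$, precisely because the Gromov boundary of the former was not described (the paper says so explicitly), whereas $\partial_\infty FZ(G,\mathcal{F})$ is identified with equivalence classes of $\mathcal{Z}$-averse trees (Theorem~\ref{boundary-fz}). More importantly, the paper's middle case is not ``$H$ is virtually cyclic, generated by a fully irreducible'' but rather ``$H$ virtually fixes a tree with trivial arc stabilizers in $\partial P\mathcal{O}(G,\mathcal{F})$'' (Theorem~\ref{trichotomy}). Reducing that to virtually-cyclic would require north--south dynamics for fully irreducibles and a proof that boundary-point stabilizers are virtually cyclic---already a separate theorem in the $\text{Out}(F_N)$ case, and not established here. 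Instead the paper invokes Guirardel--Levitt's structure theorem for the stabilizer $\text{Out}(T)$ of such a tree (Theorem~\ref{Guirardel-Levitt}), which exhibits it as built from the point-stabilizer groups $G_v$, and then descends inductively on those $G_v$, whose Kurosh rank is strictly smaller (Proposition~\ref{per-finite}). So the inductive step has \emph{two} branches (fixed proper free factor, fixed tree), not one, and the complexity used is $(\text{rk}_K(G,\mathcal{F}),\text{rk}_f(G,\mathcal{F}))$ rather than your $(k,\text{rk}(F))$.

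The step you describe as ``standard Gromov-boundary arguments''---deducing that a subgroup with no virtually-fixed free factor contains a loxodromic---is precisely the non-standard part: $FZ(G,\mathcal{F})$ is not locally finite, so a bounded orbit need not imply a finite one. The paper's device is probabilistic. It takes a $\mu$-stationary measure on the compact space $\overline{P\mathcal{O}(G,\mathcal{F})}$, proves via a study of arational trees (Section~\ref{sec-4}) that any such measure is supported on $\mathcal{Z}$-averse trees (Proposition~\ref{stationary}), and pushes forward to $\partial_\infty FZ$ to produce a limit point of the $H$-orbit there. No ping-pong on outer space and no north--south dynamics are used; the rank-two free subgroup comes directly from Gromov's classification (Theorem~\ref{Gromov}) once the limit set is nonempty and $H$ has no finite orbit in the boundary.
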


As mentioned above, our proof of Theorem \ref{theo-Tits-2} will consist in two steps: establishing a trichotomy for subgroups $H\subseteq\text{Out}(G,\mathcal{F})$, and applying an inductive argument. The induction step consists in dealing with the case where $H$ virtually preserves the conjugacy class of a proper \emph{$(G,\mathcal{F})$-free factor}. A \emph{$(G,\mathcal{F})$-free factor} is a subgroup $A\subseteq G$ such that $G$ splits as a free product of the form $G=A\ast B$, and for all $i\in\{1,\dots,k\}$, the group $G_i$ is conjugate into either $A$ or $B$. A $(G,\mathcal{F})$-free factor is \emph{proper} if it is nontrivial, not conjugate to any of the $G_i$'s, and not equal to $G$. When $H$ preserves the conjugacy class of a proper free factor $A$,  the group $H$ is contained in $\text{Out}(G,\mathcal{F}')$, where $\mathcal{F'}$ is the free factor system of $G$ obtained from $\mathcal{F}$ by removing all subgroups in $\mathcal{F}$ that are conjugate into $A$, and replacing them by the $G$-conjugacy class of the factor $A$. When passing from $(G,\mathcal{F})$ to $(G,\mathcal{F}')$, some measure of complexity decreases, which enables us to argue by induction.  
\\
\\
\indent We now describe our analogue of Ivanov's trichotomy for subgroups of $\text{Out}(G,\mathcal{F})$. We first state an analogous trichotomy for subgroups of $\text{Out}(F_N)$. We recall that an automorphism $\Phi\in\text{Out}(F_N)$ is \emph{fully irreducible} if no nontrivial power of $\Phi$ preserves the conjugacy class of a proper free factor of $F_N$. Every subgroup of $\text{Out}(F_N)$ (finitely generated or not) either  

\begin{itemize}
\item contains two fully irreducible automorphisms that generate a rank two free subgroup, or 
\item is virtually cyclic, virtually generated by a fully irreducible automorphism, or 
\item virtually fixes the conjugacy class of a proper free factor of $F_N$. 
\end{itemize} 

In \cite{HM09}, Handel and Mosher proved that any finitely generated subgroup of $\text{Out}(F_N)$ either contains a fully irreducible automorphism, or virtually fixes the conjugacy class of a proper free factor. Their proof uses the same kinds of techniques as Bestvina, Feighn and Handel's proof of the Tits alternative \cite{BFH00}, so it cannot be used to get a new proof of the Tits alternative for $\text{Out}(F_N)$. The study of subgroups of $\text{Out}(F_N)$ that contain a fully irreducible element is due to Bestvina, Feighn and Handel \cite{BFH97}, another approach is due to Kapovich and Lustig \cite{KL11}. In \cite{Hor14-3}, we gave a new, shorter proof of the above trichotomy, independent from the work in \cite{BFH00}, that also works for non finitely generated subgroups of $\text{Out}(F_N)$.

Our proof of this statement uses the action of $\text{Out}(F_N)$ on the \emph{free factor complex} $FF_N$, whose hyperbolicity was originally proved by Bestvina and Feighn \cite{BF12}. Bestvina and Feighn also proved that an automorphism $\Phi\in\text{Out}(F_N)$ acts loxodromically on $FF_N$ if and only if $\Phi$ is fully irreducible. In terms of the action of $\text{Out}(F_N)$ on $FF_N$, the above trichotomy can be restated as follows: every subgroup of $\text{Out}(F_N)$ either

\begin{itemize}
\item contains a rank two free subgroup generated by two loxodromic isometries of $FF_N$, or
\item is virtually cyclic, virtually generated by a loxodromic isometry of $FF_N$, or
\item has a finite orbit in $FF_N$.
\end{itemize}

More generally, given a group $G$ acting by isometries on a (possibly non-proper) hyperbolic space $X$, it follows from a classification of groups of isometries of hyperbolic spaces due to Gromov \cite{Gro87} that either $G$ 

\begin{itemize}
\item contains a rank two free subgroup, generated by two loxodromic isometries of $X$, or
\item has a fixed point in the Gromov boundary $\partial_{\infty}X$, or
\item has a bounded orbit in $X$.
\end{itemize}

The key point for deducing the above trichotomy statement for subgroups of $\text{Out}(F_N)$ from Gromov's statement consists in showing that if $H$ has a bounded orbit in $FF_N$, then $H$ has a finite orbit in $FF_N$. This is not obvious because $FF_N$ is not locally finite. To bypass this difficulty, we studied stationary measures on the compact closure of Culler and Vogtmann's outer space $CV_N$, and projected them to the Gromov boundary of the complex of free factors. In our proof of the above trichotomy, we also need to understand stabilizers of points in $\partial_{\infty}FF_N$ for dealing with the second case in Gromov's theorem.
\\
\\
\indent We prove a similar trichotomy for subgroups of $\text{Out}(G,\mathcal{F})$, with $(G,\mathcal{F})$ as above. To this means, we work with relative outer space $P\mathcal{O}(G,\mathcal{F})$, and the complex of relative cyclic splittings $FZ(G,\mathcal{F})$. The geometry of these complexes was investigated in a series of previous papers \cite{Hor14-5,Hor14-6}. In \cite{Hor14-5}, we described a compactification $\overline{P\mathcal{O}(G,\mathcal{F})}$ of the relative outer space in terms of very small actions of $G$ on $\mathbb{R}$-trees. In \cite{Hor14-6}, we proved the hyperbolicity of the complex of relative cyclic splittings, and described its Gromov boundary as a quotient of a subspace $P\mathcal{X}(G,\mathcal{F})$ of $\overline{P\mathcal{O}(G,\mathcal{F})}$. Assume that the pair $(G,\mathcal{F})$ is \emph{nonsporadic}, i.e. we do not have $G=G_1\ast G_2$ and $\mathcal{F}=\{[G_1],[G_2]\}$, or $G=G_1\ast\mathbb{Z}$ and $\mathcal{F}=\{[G_1]\}$. The trichotomy that we prove for subgroups of $\text{Out}(G,\mathcal{F})$ is the following: every subgroup $H\subseteq\text{Out}(G,\mathcal{F})$ (finitely generated or not) either

\begin{itemize}
\item contains a rank two free subgroup, generated by two loxodromic isometries of $FZ(G,\mathcal{F})$, or
\item virtually fixes a tree with trivial arc stabilizers in $\partial P\mathcal{O}(G,\mathcal{F})$, or 
\item virtually preserves the conjugacy class of a proper $(G,\mathcal{F})$-free factor.
\end{itemize}

Again, the key point is to understand subgroups of $\text{Out}(G,\mathcal{F})$ with bounded orbits in $FZ(G,\mathcal{F})$. We show that if a subgroup $H\subseteq \text{Out}(G,\mathcal{F})$ does not virtually preserve the conjugacy class of any proper $(G,\mathcal{F})$-free factor, then the $H$-orbit of any point of $FZ(G,\mathcal{F})$ has a limit point in the Gromov boundary. 

Our argument relies on techniques coming from the theory of random walks on groups. Given a probability measure $\mu$ on $\text{Out}(F_N)$ whose support generates the subgroup $H$, we consider \emph{$\mu$-stationary} measures $\nu$ on $\overline{P\mathcal{O}(G,\mathcal{F})}$, i.e. probability measures that satisfy $$\nu(E)=\sum_{\Phi\in\text{Out}(G,\mathcal{F})}\mu(\Phi)\nu(\Phi^{-1}E)$$ for all $\nu$-measurable subsets $E\subseteq \overline{P\mathcal{O}(G,\mathcal{F})}$. Compactness of $\overline{P\mathcal{O}(G,\mathcal{F})}$ yields the existence of a $\mu$-stationary measure on $\overline{P\mathcal{O}(G,\mathcal{F})}$ that describes the repartition of accumulation points of sample paths of the random walk on $\text{Out}(G,\mathcal{F})$, realized on $P\mathcal{O}(G,\mathcal{F})$ via the action. This is the Markov chain whose position at time $n$ is obtained by successive multiplications on the right of $n$ independent automorphisms, all distributed with law $\mu$. We prove that any $\mu$-stationary measure $\nu$ on $\overline{P\mathcal{O}(G,\mathcal{F})}$ is supported on the subspace $P\mathcal{X}(G,\mathcal{F})$. The measure $\nu$ therefore projects to a $\mu$-stationary measure on the Gromov boundary of $FZ(G,\mathcal{F})$. The closure of the $H$-orbit of any point in $FZ(G,\mathcal{F})$ meets the support of $\nu$, which shows the existence of a limit point in the Gromov boundary. 

To prove the Tits alternative for $\text{Out}(G,\mathcal{F})$, we also need to understand subgroups of $\text{Out}(G,\mathcal{F})$ that stabilize a tree with trivial arc stabilizers in $\partial P\mathcal{O}(G,\mathcal{F})$, which is made possible by work of Guirardel and Levitt \cite{GL14-2}. When $H$ fixes the conjugacy class of a proper free factor, we argue by induction, as explained above. 
\\
\\
\indent As we are considering invariant free factors (and not invariant splittings) for the inductive step, it could seem to be more natural to work directly in the complex of proper $(G,\mathcal{F})$-free factors, whose hyperbolicity was recently proved by Handel and Mosher \cite{HM14-2}, and try to prove that every subgroup of $\text{Out}(G,\mathcal{F})$ either has a finite orbit, or has a limit point in the Gromov boundary. However, describing the Gromov boundary of the complex of proper $(G,\mathcal{F})$-free factors is still an open problem. We bypass this difficulty by working in the complex $FZ(G,\mathcal{F})$, whose Gromov boundary was described in \cite{Hor14-6}.
\\
\\
\noindent The paper is organized as follows. In Section \ref{sec-1}, we review basic facts about Gromov hyperbolic spaces, free products of groups, and relative spaces associated to them. In Section \ref{sec-2}, we deal with the \emph{sporadic} cases where either $G=G_1\ast G_2$ and $\mathcal{F}=\{[G_1],[G_2]\}$, or $G=G_1\ast\mathbb{Z}$ and $\mathcal{F}=\{[G_1]\}$. In Section \ref{sec-3}, we state Guirardel and Levitt's theorem about stabilizers of trees in $\overline{P\mathcal{O}(G,\mathcal{F})}$ that is needed in our proof of Theorem \ref{theo-Tits-2}. Section \ref{sec-4} contains a study of \emph{arational} $(G,\mathcal{F})$-trees, which is used in Section \ref{sec-5} to establish the trichotomy for subgroups of $\text{Out}(G,\mathcal{F})$. Theorem \ref{sec-6} is devoted to the inductive arguments. The reader will also find complete versions of our various statements of the Tits alternative in this section. Finally, in Section \ref{sec-7}, we give applications of our main result to automorphism groups of right-angled Artin groups, and of relatively hyperbolic groups.  

\section*{Acknowledgements}

It is a great pleasure to thank my advisor Vincent Guirardel for the many interesting discussions we had together. 
I acknowledge support from ANR-11-BS01-013 and from the Lebesgue Center of Mathematics.

\section{Review}\label{sec-1}

\subsection{Gromov hyperbolic spaces}

A geodesic metric space $(X,d)$ is \emph{Gromov hyperbolic} if there exists $\delta>0$ such that for all $x,y,z\in X$, and all geodesic segments $[x,y],[y,z]$ and $[x,z]$, we have $N_{\delta}([x,z])\subseteq N_{\delta}([x,y])\cup N_{\delta}([y,z])$ (where given $Y\subseteq X$, we denote by $N_{\delta}(Y)$ the $\delta$-neighborhood of $Y$ in $X$). The \emph{Gromov boundary} $\partial_{\infty} X$ of $X$ is the space of equivalence classes of quasi-geodesic rays in $X$, two rays being equivalent if their images lie at bounded Hausdorff distance (we recall that a \emph{quasi-geodesic ray} is a map $\gamma:\mathbb{R}_+\to X$, so that there exist $K,L>0$ such that for all $s,t\in\mathbb{R}_+$, we have $\frac{1}{K}|t-s|-L\le d(\gamma(s),\gamma(t))\le K|t-s|+L$). An isometry $\phi$ of $X$ is \emph{loxodromic} if for all $x\in X$, we have $$\lim_{n\to +\infty}\frac{1}{n}d(x,\phi^nx)>0.$$ Given a group $G$ acting by isometries on $X$, we denote by $\Lambda_XG$ the \emph{limit set} of $G$ in $\partial_{\infty} X$, which is defined as the intersection of $\partial_{\infty} X$ with the closure of the orbit of any point in $X$ under the $G$-action. The following theorem, essentially due to Gromov, gives a classification of isometry groups of (possibly nonproper) Gromov hyperbolic spaces. A sketch of proof can be found in \cite[Proposition 3.1]{CCMT13}, see also \cite[Theorem 2.7]{Ham13}.

\begin{theo} (Gromov \cite[Section 8.2]{Gro87})\label{Gromov-1}
Let $X$ be a hyperbolic geodesic metric space, and let $G$ be a group acting by isometries on $X$. Then $G$ is either 
\begin{itemize}
\item \emph{bounded}, i.e. all $G$-orbits in $X$ are bounded; in this case $\Lambda_X G=\emptyset$, or
\item \emph{horocyclic}, i.e. $G$ is not bounded and contains no loxodromic element; in this case $\Lambda_X G$ is reduced to one point, or
\item \emph{lineal}, i.e. $G$ contains a loxodromic element, and any two loxodromic elements have the same fixed points in $\partial_{\infty} X$; in this case $\Lambda_X G$ consists of these two points, or
\item \emph{focal}, i.e. $G$ is not lineal, contains a loxodromic element, and any two loxodromic elements have a common fixed point in $\partial_{\infty} X$; in this case $\Lambda_X G$ is uncountable and $G$ has a fixed point in $\Lambda_X G$, or
\item \emph{of general type}, i.e. $G$ contains two loxodromic elements with no common endpoints; in this case $\Lambda_X G$ is uncountable and $G$ has no finite orbit in $\partial_{\infty} X$. In addition, the group $G$ contains two loxodromic isometries that generate a rank two free subgroup. 
\end{itemize} 
\end{theo}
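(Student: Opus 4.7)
The plan is to split into cases according to (a) whether $G$ contains a loxodromic isometry, and (b) the configuration of fixed-point sets on $\partial_\infty X$ of the loxodromic elements of $G$.

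First I would record the standard preliminary facts about a single loxodromic isometry $\phi$ of a hyperbolic space: it has exactly two fixed points $\phi^\pm\in\partial_\infty X$, with north–south dynamics, a quasi-axis that is coarsely unique, and any isometry commuting with $\phi^n$ for some $n\neq 0$ preserves $\{\phi^+,\phi^-\}$. All of this follows from the Morse lemma for quasi-geodesics in hyperbolic spaces.

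Case 1: $G$ contains no loxodromic element. If every $G$-orbit is bounded we are in the \emph{bounded} case and $\Lambda_X G=\emptyset$. Otherwise, choose $x\in X$ and a sequence $g_n\in G$ with $d(x,g_n x)\to\infty$; after extraction, $g_n x\to\xi\in\partial_\infty X$, so $\Lambda_X G\neq\emptyset$. I would then show $|\Lambda_X G|=1$ as follows: suppose $\xi\neq\eta$ are two limit points, realised by sequences $g_n x\to\xi$ and $h_n x\to\eta$. A Gromov-product computation shows that for $n$ large, $h_n^{-1}g_n$ moves $x$ by a distance that greatly exceeds the Gromov products $(x\mid h_n^{-1}g_n x)_x$ and $(x\mid g_n^{-1}h_n x)_x$; a standard argument (as in \cite{CCMT13}) then produces an element with positive stable translation length, i.e.\ a loxodromic element, contradicting our hypothesis. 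So $G$ is \emph{horocyclic}.

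Case 2: $G$ contains a loxodromic element $\phi$. Let
\[
L:=\bigcap_{\psi\in G\text{ loxodromic}}\mathrm{Fix}(\psi)\subseteq\{\phi^+,\phi^-\}.
\]
I would distinguish three subcases according to $|L|\in\{2,1,0\}$.

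If $|L|=2$, every loxodromic of $G$ fixes $\{\phi^+,\phi^-\}$ setwise. Conjugating $\phi$ by an arbitrary $g\in G$ gives a loxodromic $g\phi g^{-1}$ with fixed points $\{g\phi^+,g\phi^-\}=\{\phi^+,\phi^-\}$, hence the whole group $G$ permutes $\{\phi^+,\phi^-\}$. This is the \emph{lineal} case, and a tubular-neighbourhood/Busemann argument around a quasi-axis of $\phi$ shows $\Lambda_XG=\{\phi^+,\phi^-\}$. If $|L|=1$, write $L=\{\xi\}$, and choose a loxodromic $\psi$ with $\psi^-=\xi$ and $\psi^+\neq\phi^+,\phi^-$; then every element of $G$ fixes $\xi$ (else one produces a loxodromic missing $\xi$, contradicting the definition of $L$). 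To see that $\Lambda_XG$ is uncountable, apply a Schottky-type ping-pong to $\phi^N$ and to $g\psi^N g^{-1}$ for a suitable $g\in G$ and large $N$: one obtains a Cantor set of attracting points in $\partial_\infty X$. This is the \emph{focal} case. If $L=\emptyset$, some pair $\phi,\psi\in G$ of loxodromics has disjoint fixed-point sets, and the ping-pong lemma applied to sufficiently high powers $\phi^N,\psi^N$ produces a rank-two free subgroup consisting (apart from the identity) of loxodromic isometries; iterating the ping-pong shows $\Lambda_XG$ is uncountable and that $G$ has no finite orbit on $\Lambda_XG$. This is the case of \emph{general type}.

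The main obstacle is the singleton-limit-set assertion in the horocyclic case: since $X$ need not be proper, one cannot use compactness of $\partial_\infty X$ or of closed balls, and one must instead extract a positive translation length purely from Gromov-product estimates. Once that lemma is in place, everything else reduces to variations of the Morse lemma and ping-pong, applied in the four sub-configurations of loxodromic fixed-point sets.
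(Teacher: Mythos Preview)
The paper does not prove this theorem: it is quoted as a known result due to Gromov, with a pointer to \cite[Proposition 3.1]{CCMT13} and \cite[Theorem 2.7]{Ham13} for a sketch. So there is nothing to compare against; your outline is essentially the standard proof from those references, and the case split (no loxodromic vs.\ loxodromic, then according to $|L|\in\{0,1,2\}$) is the right organisation. Your identification of the horocyclic case as the delicate point, and the use of Gromov-product estimates to manufacture a loxodromic, is exactly how \cite{CCMT13} handles it.

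One genuine issue: your argument for uncountability of $\Lambda_XG$ in the \emph{focal} case is not right as written. In that case every loxodromic element fixes $\xi$, and since every $g\in G$ fixes $\xi$ (as you argue), the conjugate $g\psi^Ng^{-1}$ also fixes $\xi$. Two loxodromics sharing a common fixed point do not satisfy the hypotheses of the usual Schottky ping-pong, so you do not produce a free subgroup this way (indeed, focal groups need not contain a nonabelian free subgroup). The correct route is to observe that $\Lambda_XG$ already contains at least three points (namely $\xi$ and the two distinct repelling fixed points of $\phi$ and $\psi$), and then to use the standard fact that a closed $G$-invariant subset of $\partial_\infty X$ containing at least three points is perfect, hence uncountable; alternatively, show directly that the set of repelling fixed points of conjugates $\phi^n\psi\phi^{-n}$ accumulates nontrivially. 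Also, in your focal paragraph the sentence ``else one produces a loxodromic missing $\xi$'' hides a small argument: if $g\xi\neq\xi$ then $g\phi g^{-1}$ and $g\psi g^{-1}$ are both loxodromic and must fix $\xi$, forcing $g^{-1}\xi\in\{\phi^+,\phi^-\}\cap\{\psi^+,\psi^-\}=\{\xi\}$, a contradiction; it is worth making this explicit.
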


In particular, we have the following result.

\begin{theo} (Gromov \cite[Section 8.2]{Gro87})\label{Gromov}
Let $X$ be a hyperbolic geodesic metric space, and let $G$ be a group acting by isometries on $X$. If $\Lambda_X G\neq\emptyset$, and $G$ has no finite orbit in $\partial_{\infty} X$, then $G$ contains a rank two free subgroup generated by two loxodromic isometries.
\end{theo}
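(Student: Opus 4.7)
The plan is to derive Theorem \ref{Gromov} as an immediate consequence of the trichotomy in Theorem \ref{Gromov-1}, by ruling out the four non-generic cases using the two hypotheses $\Lambda_X G\neq\emptyset$ and ``no finite $G$-orbit in $\partial_{\infty}X$''.

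First I would apply Theorem \ref{Gromov-1} to the $G$-action on $X$, so that $G$ falls into exactly one of the five classes (bounded, horocyclic, lineal, focal, general type). I would then eliminate the first four classes one by one. The bounded case is ruled out because it would give $\Lambda_X G=\emptyset$, contradicting the first hypothesis. The horocyclic case is ruled out because $\Lambda_X G$ is then a single point, which is automatically $G$-invariant and hence a finite $G$-orbit in $\partial_{\infty}X$, contradicting the second hypothesis. The focal case is ruled out directly by Theorem \ref{Gromov-1}, since in that case $G$ has a fixed point in $\Lambda_X G\subseteq\partial_{\infty}X$, again contradicting the ``no finite orbit'' hypothesis.

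The only mildly subtle point is the lineal case: here $\Lambda_X G$ consists of the two common fixed points in $\partial_{\infty}X$ of all loxodromic elements of $G$. These two points need not be individually fixed by $G$ (an element of $G$ might swap them), but the two-point set $\Lambda_X G$ is $G$-invariant, so both $G$-orbits it decomposes into have size at most $2$, giving a finite orbit in $\partial_{\infty}X$ and again contradicting our hypothesis. Having excluded the four cases, $G$ must be of general type, and the last bullet of Theorem \ref{Gromov-1} directly yields two loxodromic isometries generating a rank two free subgroup, which is exactly the conclusion.

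No step looks like a genuine obstacle here; the work has already been done in the more refined Theorem \ref{Gromov-1}. If anything, the only care needed is in the lineal case, to remember that ``finite orbit in $\partial_{\infty}X$'' allows orbits of size $2$ (or equivalently, a finite $G$-invariant subset) rather than only fixed points, so that the two-point limit set genuinely contradicts the hypothesis.
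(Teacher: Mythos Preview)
Your argument is correct and is precisely the intended one: the paper does not give a separate proof but simply presents Theorem \ref{Gromov} as a particular case of Theorem \ref{Gromov-1} (introduced by ``In particular, we have the following result''), and your case-by-case elimination of the bounded, horocyclic, lineal, and focal cases is exactly how one reads off the conclusion from that classification. Your handling of the lineal case is also right: the two-point limit set is $G$-invariant and hence yields a finite $G$-orbit in $\partial_\infty X$.
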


\subsection{Free factor systems and relative complexes} \label{sec-relative}

\paragraph*{Free factor systems.}

Let $G$ be a countable group that splits as a free product of the form $$G:=G_1\ast\dots\ast G_k\ast F,$$ where $F$ is a finitely generated free group. We let $\mathcal{F}:=\{[G_1],\dots,[G_k]\}$ be the finite collection of all $G$-conjugacy classes of the $G_i$'s. We fix a free basis $\{g_1,\dots,g_N\}$ of $F$, and we let $T^{\text{def}}$ be the $G$-tree defined as the Bass--Serre tree of the graph of group decomposition of $G$ depicted on Figure \ref{fig-def}. The rank of the free group $F$ arising in the splitting of $G$ only depends on $\mathcal{F}$. We call it the \emph{free rank} of $(G,\mathcal{F})$ and denote it by $\text{rk}_f(G,\mathcal{F})$. The \emph{Kurosh rank} of $(G,\mathcal{F})$ is defined as $\text{rk}_K(G,\mathcal{F}):=|\mathcal{F}|+\text{rk}_f(G,\mathcal{F})$. 

\begin{figure}
\begin{center}
\input{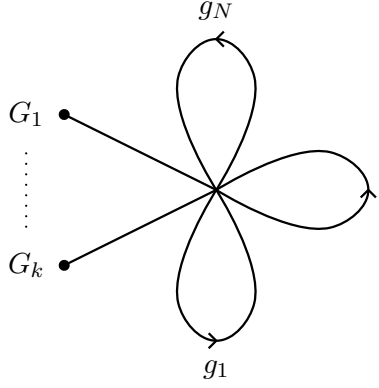}
\caption{The tree $T^{\text{def}}$ is the Bass--Serre tree of the above graph of groups decomposition of $G$.}
\label{fig-def}
\end{center}
\end{figure}

Subgroups of $G$ which are conjugate into one of the subgroups of $\mathcal{F}$ will be called \emph{peripheral} subgroups. A \emph{$(G,\mathcal{F})$-free splitting} is a minimal, simplicial $G$-tree $T$ in which all peripheral subgroups are \emph{elliptic} (i.e. they fix a point in $T$), and edge stabilizers are trivial. 

\paragraph*{Subgroups of free products.}

Subgroups of free products were studied by Kurosh in \cite{Kur34}. Let $H$ be a subgroup of $G$. By considering the $H$-minimal subtree in the tree $T^{\text{def}}$ (see the definition in Section \ref{sec-o} below), we get the existence of a (possibly infinite) set $J$, together with an integer $i_j\in\{1,\dots,k\}$, a nontrivial subgroup $H_{j}\subseteq G_{i_j}$ and an element $g_{j}\in G$ for each $j\in J$, and a (not necessarily finitely generated) free subgroup $F'\subseteq G$, so that $$H=\ast_{j\in J}~ g_{j}H_{j}g_{j}^{-1}\ast F'.$$ This splitting will be called the \emph{Kurosh decomposition} of $H$. The \emph{Kurosh rank} of $H$ is equal to $\text{rk}_K(H):=|J|+\text{rk}(F')$, its \emph{free rank} is $\text{rk}_f(H):=\text{rk}(F')$. They can be infinite in general. We also let $\mathcal{F}_H$ denote the set of $H$-conjugacy classes of the subgroups $g_{j}H_{j}g_{j}^{-1}$, which might also be infinite in general. We note that $\text{rk}_f(G,\mathcal{F})$ and $\mathcal{F}_H$ (and hence $\text{rk}_K(G,\mathcal{F})$) only depend on $H$ and $\mathcal{F}$, and not of our initial choice of $T^{\text{def}}$.

\paragraph*{Free factors.}

A \emph{$(G,\mathcal{F})$-free factor} is a subgroup of $G$ that is a point stabilizer in some $(G,\mathcal{F})$-free splitting. A $(G,\mathcal{F})$-free factor is \emph{proper} if it is nonperipheral (in particular nontrivial), and not equal to $G$. The Kurosh decomposition of a proper $(G,\mathcal{F})$-free factor reads as $$H=G'_{i_1}\ast\dots\ast G'_{i_r}\ast F',$$ where each of the subgroups $G'_{i_j}$ is conjugate in $G$ to one of the factors in $\mathcal{F}$ (with no repetition in the indices, i.e. the $G'_{i_j}$'s are pairwise non conjugate in $G$), and $F'$ is a finitely generated free group. In particular, the Kurosh rank of $H$ is finite. The group $G$ then splits as $$G=H\ast G'_{i_{r+1}}\ast\dots\ast G'_{i_k}\ast F'',$$ where $F''$ is a finitely generated free subgroup of $G$, and the $G'_{i_j}$'s are conjugate to the factors in $\mathcal{F}$ that do not arise in the Kurosh decomposition of $H$. The finite collection $\mathcal{F}':=\{[H],[G_{i_{r+1}}],\dots,[G_{i_k}]\}$ (where we consider $G$-conjugacy classes) is a free factor system of $G$, and we have
\begin{equation}\label{J}
|\mathcal{F}'|+|\mathcal{F}_H|=|\mathcal{F}|+1,
\end{equation}
\noindent and
\begin{equation}\label{rkf}
\text{rk}_f(G,\mathcal{F}')+\text{rk}_f(H)=\text{rk}_f(G,\mathcal{F}),
\end{equation}
\noindent whence
\begin{equation}\label{rkK}
\text{rk}_K(G,\mathcal{F}')+\text{rk}_K(H)=\text{rk}_K(G,\mathcal{F})+1.
\end{equation}

Let $H$ and $H'$ be two $(G,\mathcal{F})$-free factors, and let $T$ be a $(G,\mathcal{F})$-free splitting, one of whose elliptic subgroups is equal to $H$. By looking at the $H'$-minimal subtree of $T$, we see that $H\cap H'$ is an $(H',\mathcal{F}_{H'})$-free factor, so it is a $(G,\mathcal{F})$-free factor. This implies that the intersection of any family of $(G,\mathcal{F})$-free factors is again a free factor. In particular, any subgroup $A\subseteq G$ is contained in a smallest $(G,\mathcal{F})$-free factor, obtained as the intersection of all $(G,\mathcal{F})$-free factors that contain $A$. We denote it by $\text{Fill}(A)$.

\paragraph*{Relative automorphisms.}

Let $G$ be a countable group, and $\mathcal{F}$ be a free factor system of $G$. We denote by $\text{Out}(G,\mathcal{F})$ the subgroup of $\text{Out}(G)$ made of those automorphisms that preserve the conjugacy classes in $\mathcal{F}$. We denote by $\text{Out}(G,\mathcal{F}^{(t)})$ the subgroup of $\text{Out}(G)$ made of those automorphisms that act as a conjugation by an element of $G$ on each peripheral subgroup. 

For all $i\in\{1,\dots,k\}$, the group $G_i$ is equal to its normalizer in $G$. Therefore, any element of $\text{Out}(G)$ that preserves the conjugacy class of $G_i$ induces a well-defined outer automorphism of $G_i$. In other words, there is a morphism $$\text{Out}(G,\{[G_i]\})\to\text{Out}(G_i).$$ By taking the product over all groups $G_i$, we thus get a (surjective) morphism $$\text{Out}(G,\mathcal{F})\to\prod_{i=1}^k\text{Out}(G_i),$$ whose kernel is equal to $\text{Out}(G,\mathcal{F}^{(t)})$.

More generally, suppose that we are given a collection of subgroups $A_i\subseteq\text{Out}(G_i)$ for all $i\in\{1,\dots,k\}$, and let $\mathcal{A}=\{A_1,\dots,A_k\}$. We can define the subgroup $\text{Out}(G,\mathcal{F}^{\mathcal{A}})$ of $\text{Out}(G)$ made of those automorphisms that preserve all conjugacy classes in $\mathcal{F}$, and which induce an element of $A_i$ in restriction to $G_i$ for all $i\in\{1,\dots,k\}$. As above, there is a (surjective) morphism $$\text{Out}(G,\mathcal{F}^{\mathcal{A}})\to\prod_{i=1}^k A_i,$$ whose kernel is equal to $\text{Out}(G,\mathcal{F}^{(t)})$.

\subsection{Relative outer spaces}\label{sec-o}

An \emph{$\mathbb{R}$-tree} is a metric space $(T,d_T)$ in which any two points $x,y\in T$ are joined by a unique embedded topological arc, which is isometric to a segment of length $d_T(x,y)$. A $(G,\mathcal{F})$-tree is an $\mathbb{R}$-tree equipped with a minimal, isometric action of $G$, in which all peripheral subgroups of $G$ are \emph{elliptic}. We recall that an action on a tree is termed \emph{minimal} if there is no proper and nontrivial invariant subtree. Whenever a group $G$ acts on an $\mathbb{R}$-tree $T$, and some element of $G$ does not fix any point in $T$, there is a unique subtree of $T$ on which the $G$-action is minimal. In particular, whenever $H$ is a subgroup of $G$ that contains a hyperbolic element, we can consider the minimal subtree for the induced action of $H$ on $T$, which we call the \emph{$H$-minimal subtree} of $T$. The action of $H$ on $T$ is \emph{simplicial} if the $H$-minimal subtree is homeomorphic (when equipped with the topology defined by the metric) to a simplicial tree. We say that the action of $H$ on $T$ is \emph{relatively free} if all point stabilizers of the $H$-minimal subtree of $T$ are conjugate into $\mathcal{F}_H$.
 
A \emph{Grushko $(G,\mathcal{F})$-tree} is a simplicial $(G,\mathcal{F})$-tree with trivial edge stabilizers, all of whose elliptic subgroups are peripheral. Two $(G,\mathcal{F})$-trees are \emph{equivalent} if there exists a $G$-equivariant isometry between them.

The \emph{unprojectivized outer space} $\mathcal{O}(G,\mathcal{F})$, introduced by Guirardel and Levitt in \cite{GL07}, is defined to be the space of all equivalence classes of Grushko $(G,\mathcal{F})$-trees. \emph{Outer space} $P\mathcal{O}(G,\mathcal{F})$ is defined as the space of homothety classes of trees in $\mathcal{O}(G,\mathcal{F})$. Outer space, as well as its unprojectivized version, comes equipped with a right action of $\text{Out}(G,\mathcal{F})$, given by precomposing the actions (this can be turned into a left action by letting $\Phi.T:=T.\Phi^{-1}$ for all $T\in \mathcal{O}(G,\mathcal{F})$ and all $\Phi\in\text{Out}(G,\mathcal{F})$).

For all $g\in G$ and all $T\in \mathcal{O}(G,\mathcal{F})$, the \emph{translation length} of $g$ in $T$ is defined to be $$||g||_T:=\inf_{x\in T}d_T(x,gx).$$ Culler and Morgan have shown in \cite{CM87} that the map
\begin{displaymath}
\begin{array}{cccc}
i:&\mathcal{O}(G,\mathcal{F})&\to &\mathbb{R}^{G}\\
&T&\mapsto &(||g||_T)_{g\in G}
\end{array}
\end{displaymath}

\noindent is injective. We equip $\mathcal{O}(G,\mathcal{F})$ with the topology induced by this embedding, which is called the \emph{axes topology}. Outer space is then embedded as a subspace of the projective space $\mathbb{PR}^{G}$, and is equipped with the quotient topology. Its closure $\overline{P\mathcal{O}(G,\mathcal{F})}$, whose lift to $\mathbb{R}^{G}$ we denote by $\overline{\mathcal{O}(G,\mathcal{F})}$, is compact (see \cite[Theorem 4.2]{CM87} and \cite[Proposition 1.2]{Hor14-5}). We let $\partial P\mathcal{O}(G,\mathcal{F}):=\overline{P\mathcal{O}(G,\mathcal{F})}\smallsetminus P\mathcal{O}(G,\mathcal{F})$, and similarly $\partial\mathcal{O}(G,\mathcal{F}):=\overline{\mathcal{O}(G,\mathcal{F})}\smallsetminus\mathcal{O}(G,\mathcal{F})$. A $(G,\mathcal{F})$-tree $T$ is \emph{very small} if its arc stabilizers are either trivial, or maximally-cyclic and nonperipheral, and its tripod stabilizers are trivial. In \cite[Theorem 0.1]{Hor14-5}, we identified the space $\overline{P\mathcal{O}(G,\mathcal{F})}$ with the space of very small, minimal, projective $(G,\mathcal{F})$-trees. We also proved that it has finite topological dimension equal to $3\text{rk}_f(G,\mathcal{F})+2|\mathcal{F}|-4$.

\subsection{The cyclic splitting graph} \label{sec-hyp-fz}

Let $G$ be a countable group, and let $\mathcal{F}$ be a free factor system of $G$. A \emph{$\mathcal{Z}$-splitting} of $(G,\mathcal{F})$ is a minimal, simplicial $(G,\mathcal{F})$-tree, all of whose edge stabilizers are either trivial, or cyclic and nonperipheral. It is a \emph{one-edge} splitting if it has exactly one $G$-orbit of edges. Two $\mathcal{Z}$-splittings are \emph{equivalent} if there exists a $G$-equivariant homeomorphism between them. Given two $(G,\mathcal{F})$-trees $T$ and $T'$, a map $f:T\to T'$ is \emph{alignment-preserving} if the $f$-image of every segment in $T$ is a segment in $T'$. If there exists a $G$-equivariant alignment-preserving map from $T$ to $T'$, we say that $T$ is a \emph{refinement} of $T'$. The \emph{cyclic splitting graph} $FZ(G,\mathcal{F})$ is the graph whose vertices are the equivalence classes of one-edge $\mathcal{Z}$-splittings of $(G,\mathcal{F})$, two distinct vertices being joined by an edge if the corresponding splittings admit a common refinement. The graph $FZ(G,\mathcal{F})$ admits a natural right action of $\text{Out}(G,\mathcal{F})$, by precomposition of the actions. In \cite{Hor14-6}, we proved hyperbolicity of the graph $FZ(G,\mathcal{F})$. 

\begin{theo} (Horbez \cite[Theorem 3.1]{Hor14-6})
Let $G$ be a countable group, and let $\mathcal{F}$ be a free factor system of $G$. Then the graph $FZ(G,\mathcal{F})$ is Gromov hyperbolic. 
\end{theo}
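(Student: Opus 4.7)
The plan is to adapt the strategy used by Bestvina--Feighn for the free factor complex of $F_N$, and later extended by Mann and Handel--Mosher to $\mathcal{Z}$-splitting graphs, to the relative setting. The key idea is to use the relative outer space $\mathcal{O}(G,\mathcal{F})$ as a geometric model, construct folding paths between Grushko trees, project these folding paths to the cyclic splitting graph $FZ(G,\mathcal{F})$, and verify that these projected paths form a family of uniform unparametrized quasi-geodesics. Hyperbolicity will then follow from the Masur--Minsky / Bowditch guessing-geodesics criterion.

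First I would construct a coarsely Lipschitz projection $\pi:\mathcal{O}(G,\mathcal{F})\to FZ(G,\mathcal{F})$. Given a Grushko $(G,\mathcal{F})$-tree $T$, one associates a one-edge $\mathcal{Z}$-splitting by choosing, in an equivariant way, a single edge orbit of $T$ and collapsing all other edges (trivial edge stabilizers are fine, since they are allowed in a $\mathcal{Z}$-splitting). One verifies that any two such choices differ by a bounded amount in $FZ(G,\mathcal{F})$, and that an elementary fold of $T$ moves $\pi(T)$ by a uniformly bounded amount, giving the coarse Lipschitz bound. Next I would define Stallings-style folding paths in $\mathcal{O}(G,\mathcal{F})$ connecting any two Grushko trees, adapting the Bestvina--Feighn or Guirardel--Levitt construction to the relative setting, and record their combinatorial structure (the sequence of elementary folds).

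The main technical step is to show that for any folding path $(T_t)_{t\in I}$ in $\mathcal{O}(G,\mathcal{F})$, the projected path $(\pi(T_t))_{t\in I}$ is an unparametrized quasi-geodesic in $FZ(G,\mathcal{F})$. This has two ingredients: a progress statement (the projection eventually moves away) and a no-backtracking statement (a one-edge $\mathcal{Z}$-splitting that stops being "visible" in $T_t$ cannot become visible again later in the path). The no-backtracking part should be established by analyzing the nonperipheral cyclic subgroups that can appear as edge stabilizers of collapses of $T_t$, and showing that once the associated one-edge splitting $Z$ ceases to be compatible with $T_t$, the element generating its edge stabilizer acquires a definite amount of length in $T_t$ that only grows thereafter, preventing $Z$ from reappearing. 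Finally, with such a uniform family of quasi-geodesic projections of folding paths at hand, a standard application of the guessing-geodesics criterion yields hyperbolicity of $FZ(G,\mathcal{F})$.

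The hard part will be the no-backtracking analysis in the third step. In the absolute case of $F_N$, this relies on a careful combinatorial tracking of how subgroups of bounded length evolve under folding, and in the relative setting the presence of the nontrivial peripheral factors $G_i$ adds bookkeeping: one must distinguish between edges whose stabilizers are trivial and edges whose stabilizers are maximally cyclic nonperipheral (the types allowed in a $\mathcal{Z}$-splitting), and ensure that folds respect this distinction in the appropriate sense. I would expect this to require a relative version of Bestvina--Feighn's theory of "legal structures" and "folds are legal", rather than a black-box reduction to the free case, since the analysis of short subgroups along a folding path interacts nontrivially with the peripheral stratification imposed by $\mathcal{F}$.
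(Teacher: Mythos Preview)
The present paper does not prove this theorem: it is quoted from \cite{Hor14-6} and used as a black box, so there is no proof here to compare your proposal against. Your outline is, however, a reasonable summary of the strategy actually carried out in \cite{Hor14-6}, which follows Handel--Mosher's proof for the free splitting complex and Mann's extension to the cyclic splitting complex, transported to the relative setting via folding paths in $\mathcal{O}(G,\mathcal{F})$ and the Bowditch/Masur--Minsky guessing-geodesics criterion.

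One genuine gap in your sketch is the no-backtracking step. You propose to prevent a one-edge $\mathcal{Z}$-splitting $Z$ from reappearing along a folding path by tracking the translation length in $T_t$ of a generator of its edge group. But a $\mathcal{Z}$-splitting is allowed to have \emph{trivial} edge stabilizer, in which case there is no such element to track; your argument as stated says nothing about free splittings, which are exactly the vertices of $FZ(G,\mathcal{F})$ coming from $\pi(T_t)$ itself. In the actual proofs (Handel--Mosher, Mann, and the relative version in \cite{Hor14-6}) the contraction/no-backtracking property is obtained by a combinatorial analysis of how a given one-edge splitting is ``carried'' or ``visible'' in the trees $T_t$ along the folding path (via collapse maps and the combinatorics of natural edges under folds), not by a length argument on a single element. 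Your plan would need to incorporate that analysis, or some substitute covering the trivial-edge-group case, before the guessing-geodesics criterion can be invoked.
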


We also described the Gromov boundary of $FZ(G,\mathcal{F})$. A tree $T\in\overline{\mathcal{O}(G,\mathcal{F})}$ is \emph{$\mathcal{Z}$-compatible} if it is compatible with some $\mathcal{Z}$-splitting of $(G,\mathcal{F})$, and \emph{$\mathcal{Z}$-incompatible} otherwise. It is \emph{$\mathcal{Z}$-averse} if it is not compatible with any $\mathcal{Z}$-compatible tree $T'\in\overline{\mathcal{O}(G,\mathcal{F})}$ (see \cite[Section 5.6.1]{Hor14-6} for examples of $\mathcal{Z}$-incompatible trees that are not $\mathcal{Z}$-averse). We denote by $\mathcal{X}(G,\mathcal{F})$ the subspace of $\overline{\mathcal{O}(G,\mathcal{F})}$ consisting of $\mathcal{Z}$-averse trees. Two trees $T,T'\in\mathcal{X}(G,\mathcal{F})$ are \emph{equivalent}, which we denote by $T\sim T'$, if they are both compatible with a common tree in $\overline{\mathcal{O}(G,\mathcal{F})}$. There is a natural, coarsely well-defined map $\psi:\mathcal{O}(G,\mathcal{F})\to FZ(G,\mathcal{F})$.

\begin{theo} (Horbez \cite[Theorem 0.2]{Hor14-6}) \label{boundary-fz}
Let $G$ be a countable group, and let $\mathcal{F}$ be a free factor system of $G$. Then there exists a unique $\text{Out}(G,\mathcal{F})$-equivariant homeomorphism $$\partial{\psi}:\mathcal{X}(G,\mathcal{F})/{\sim}\to\partial_{\infty} FZ(G,\mathcal{F}),$$ so that for all $T\in\mathcal{X}(G,\mathcal{F})$, and all sequences $(T_i)_{i\in\mathbb{N}}\in \mathcal{O}(G,\mathcal{F})^{\mathbb{N}}$ converging to $T$, the sequence $(\psi(T_i))_{i\in\mathbb{N}}$ converges to $\partial{\psi}(T)$. 
\end{theo}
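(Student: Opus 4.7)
The plan is to establish the boundary identification via the coarsely well-defined projection $\psi:\mathcal{O}(G,\mathcal{F})\to FZ(G,\mathcal{F})$, following the strategy pioneered by Klarreich for the curve complex and adapted by Bestvina--Reynolds and Hamenst\"adt to the free factor complex of $F_N$. The key tool throughout is the hyperbolicity of $FZ(G,\mathcal{F})$ already stated above, together with the structure of $\overline{\mathcal{O}(G,\mathcal{F})}$ as a space of very small $(G,\mathcal{F})$-trees.

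First I would show that for any $T\in\mathcal{X}(G,\mathcal{F})$ and any sequence $(T_i)\in\mathcal{O}(G,\mathcal{F})^{\mathbb{N}}$ converging to $T$, the sequence $(\psi(T_i))$ is unbounded in $FZ(G,\mathcal{F})$. This is the step where $\mathcal{Z}$-averseness enters: if $(\psi(T_i))$ stayed bounded, one could extract a subsequence whose images lie within bounded distance of a fixed one-edge $\mathcal{Z}$-splitting $S$. A standard folding/compactness argument in $\overline{\mathcal{O}(G,\mathcal{F})}$ would then yield, after further extraction, a $\mathcal{Z}$-splitting compatible with the limit $T$, contradicting $\mathcal{Z}$-averseness. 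Next, I would show that if $(T_i),(T'_i)$ converge to equivalent trees $T\sim T'$, then the Gromov product $(\psi(T_i)\cdot\psi(T'_i))$ goes to infinity: using a common refinement $\hat{T}\in\overline{\mathcal{O}(G,\mathcal{F})}$ of $T$ and $T'$, one builds coarse common refinements $\hat{T}_i$ of $T_i$ and $T'_i$ whose projections to $FZ(G,\mathcal{F})$ track a quasigeodesic segment between $\psi(T_i)$ and $\psi(T'_i)$ with arbitrarily large midpoints. Conversely, for non-equivalent $\mathcal{Z}$-averse trees, a transversality argument using dual laminations or length-function separation shows the Gromov products stay bounded, so the induced map $\partial\psi$ is well-defined and injective on $\mathcal{X}(G,\mathcal{F})/{\sim}$.

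For surjectivity, given $\xi\in\partial_{\infty}FZ(G,\mathcal{F})$, I would pick a quasigeodesic ray in $FZ(G,\mathcal{F})$ converging to $\xi$, lift each vertex to a Grushko tree via a coarse section of $\psi$, and use compactness of $\overline{P\mathcal{O}(G,\mathcal{F})}$ to extract a subsequential limit $T$. The unboundedness argument applied in reverse forces $T$ to be $\mathcal{Z}$-averse, since any $\mathcal{Z}$-compatibility would allow the lifted sequence to be captured within a bounded neighborhood of a single splitting, contradicting that $\psi(T_i)\to\xi$. Continuity is then established by showing that if $T_n\to T_\infty$ in $\mathcal{X}(G,\mathcal{F})$, one can choose approximating Grushko trees $T_{n,i}\to T_n$ and diagonalize to produce a sequence converging to $T_\infty$ whose projections converge to $\partial\psi(T_\infty)$, while simultaneously $\partial\psi(T_n)$ is the limit of those projections; hyperbolicity then forces $\partial\psi(T_n)\to\partial\psi(T_\infty)$. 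Uniqueness of $\partial\psi$ follows from the density of $\mathcal{O}(G,\mathcal{F})$-sequences converging to any boundary tree, and $\text{Out}(G,\mathcal{F})$-equivariance is inherited from the equivariance of $\psi$.

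The main obstacle I expect is controlling the equivalence relation $\sim$ precisely, that is, simultaneously proving that common refinements of equivalent $\mathcal{Z}$-averse trees produce sequences with unbounded Gromov products, \emph{and} that the failure of such refinements for non-equivalent trees forces bounded Gromov products. This requires a fine structural understanding of arational $(G,\mathcal{F})$-trees (prepared in Section \ref{sec-4}), including a classification of which trees in $\partial\mathcal{O}(G,\mathcal{F})$ can share a common refinement, and careful analysis of how refinements project into $FZ(G,\mathcal{F})$. The unboundedness direction in the first step, requiring a uniform folding argument to extract a common $\mathcal{Z}$-splitting from approximating sequences, is also a delicate compactness-type statement about very small $(G,\mathcal{F})$-trees that has no truly routine analogue in the free group case.
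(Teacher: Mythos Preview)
This theorem is not proved in the present paper: it is quoted verbatim as \cite[Theorem 0.2]{Hor14-6} and used as a black box. There is therefore no proof here to compare your proposal against. Your outline is a reasonable high-level sketch of the Klarreich-style strategy that \cite{Hor14-6} indeed follows, but it is not something this paper undertakes.

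One point of confusion in your write-up: you say the injectivity step ``requires a fine structural understanding of arational $(G,\mathcal{F})$-trees (prepared in Section~\ref{sec-4})''. Section~\ref{sec-4} of \emph{this} paper is logically downstream of Theorem~\ref{boundary-fz}, not a preparation for it; the arational-tree material here is developed to feed into the stationary-measure argument of Section~\ref{sec-5}, and in fact uses Theorem~\ref{boundary-fz} (via Proposition~\ref{ATX} and the map $\partial\psi$) rather than the other way around. The structural input that \cite{Hor14-6} actually uses for the boundary description is the analysis of $\mathcal{Z}$-averse trees and the canonical mixing representative (recorded here as Proposition~\ref{mixing-representative}), not the arational classification of Section~\ref{sec-4}. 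So while your broad strategy is correct, be careful not to invert the logical dependencies between the two papers.
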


We also proved that every $\sim$-class of $\mathcal{Z}$-averse trees contains a unique simplex of mixing representatives. A tree $T\in\overline{\mathcal{O}(G,\mathcal{F})}$ is \emph{mixing} if for all finite subarcs $I,J\subseteq T$, there exist $g_1,\dots,g_k\in G$ such that $J\subseteq g_1I\cup\dots\cup g_kI$, and for all $i\in\{1,\dots,k-1\}$, we have $g_iI\cap g_{i+1}I\neq\emptyset$. Two $\mathbb{R}$-trees $T$ and $T'$ are \emph{weakly homeomorphic} if there exist maps $f:T\to T'$ and $g:T'\to T$ that are continuous in restriction to segments, and inverse of each other.

\begin{prop} (Horbez \cite[Proposition 5.3]{Hor14-6})\label{mixing-representative}
For all $T\in\mathcal{X}(G,\mathcal{F})$, there exists a mixing tree $\overline{T}\in\mathcal{X}(G,\mathcal{F})$ onto which all trees $T'\in\mathcal{X}(G,\mathcal{F})$ that are equivalent to $T$ collapse. In addition, any two such trees are $G$-equivariantly weakly homeomorphic. Any tree $T\in\overline{\mathcal{O}(G,\mathcal{F})}$ that is both $\mathcal{Z}$-incompatible and mixing, is $\mathcal{Z}$-averse.
\end{prop}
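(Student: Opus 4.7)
The plan is to handle the three assertions in sequence, from the construction of $\overline T$ to its uniqueness, and finally to the $\mathcal Z$-averseness claim.

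For the first assertion, given $T \in \mathcal{X}(G,\mathcal{F})$, I would construct $\overline T$ as a canonical $G$-equivariant alignment-preserving quotient of $T$. Declare a symmetric, $G$-invariant relation on finite subarcs of $T$ by $I \asymp J$ if there exist $g_1, \dots, g_k \in G$ with $g_i I \cap g_{i+1} I \neq \emptyset$ and $J \subseteq g_1 I \cup \dots \cup g_k I$. Mixing of $T$ is precisely the assertion that any two finite arcs are $\asymp$-related. When this fails, I would collapse the maximal subtrees of $T$ consisting of arcs that fail to $\asymp$-relate to a fixed reference arc, and iterate (passing to a direct limit if needed). The result is a $G$-equivariant alignment-preserving quotient $\overline T$ of $T$ that is mixing by construction. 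The hypothesis $T \in \mathcal X(G,\mathcal F)$ is essential here: the very-small identification from \cite[Theorem 0.1]{Hor14-5} is used to check that $\overline T$ lies in $\overline{\mathcal O(G,\mathcal F)}$, and $\mathcal Z$-averseness of $T$ prevents the iterative collapse from producing a $\mathcal Z$-compatible quotient. For the universal property — that every $T' \sim T$ collapses onto $\overline T$ — I fix a common tree $\hat T$ with which both $T$ and $T'$ are compatible, take common refinements on both sides using Guirardel's compatibility theory, and verify that the resulting factorization through $\overline T$ follows from the universal property of the mixing quotient.

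For uniqueness up to $G$-equivariant weak homeomorphism, let $\overline T_1$ and $\overline T_2$ be two mixing representatives of the $\sim$-class of $T$. The first assertion applied to each gives $G$-equivariant alignment-preserving surjections $f : \overline T_1 \to \overline T_2$ and $g : \overline T_2 \to \overline T_1$. Any nondegenerate arc of $\overline T_1$ collapsed by $g \circ f$ would, by mixing of $\overline T_1$, have its $G$-translates cover any prescribed arc, forcing the whole of $\overline T_1$ to collapse — absurd. Hence $g \circ f$ and $f \circ g$ are the identity maps on the underlying sets, and the pair $(f,g)$ realizes the required $G$-equivariant weak homeomorphism.

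For the third assertion, I argue by contraposition: suppose $T \in \overline{\mathcal O(G,\mathcal F)}$ is mixing and not $\mathcal Z$-averse, and I show $T$ is $\mathcal Z$-compatible. By assumption, $T$ is compatible with some $\mathcal Z$-compatible tree $T' \in \overline{\mathcal O(G,\mathcal F)}$, and $T'$ admits a $G$-equivariant alignment-preserving map to some one-edge $\mathcal Z$-splitting $S$. Taking a common refinement $\hat T$ of $T$ and $T'$, we obtain a composite map $\hat T \to T' \to S$. I would then use mixing of $T$ to show that every subtree of $\hat T$ collapsed in the projection $\hat T \to T$ maps to a single point in $S$: otherwise such a collapsed subtree would carry an edge of $S$, and mixing in $T$ would propagate this edge across $G$-translates, forcing $S$ itself to collapse. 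The $\mathcal Z$-splitting structure on $S$ therefore descends to a $\mathcal Z$-splitting compatible with $T$, contradicting $\mathcal Z$-incompatibility of $T$. The main obstacle throughout the proposition is guaranteeing that the equivariant quotients produced at each step remain inside $\overline{\mathcal O(G,\mathcal F)}$ and inside $\mathcal X(G,\mathcal F)$; this requires a careful case analysis of arc and tripod stabilizers under collapse, leveraging the very-small identification of \cite[Theorem 0.1]{Hor14-5}, with all the remaining content being bookkeeping with compatibility relations and common refinements.
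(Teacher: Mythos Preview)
This proposition is not proved in the present paper: it is quoted verbatim from \cite[Proposition 5.3]{Hor14-6}, with no argument given here. So there is no proof in this paper to compare your proposal against.

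That said, your outline is broadly in the right spirit and tracks the actual argument in \cite{Hor14-6}. A few remarks on the sketch itself. For the first assertion, the construction you describe --- collapsing maximal subtrees of arcs that fail to be $\asymp$-related to a reference arc --- is essentially the transverse-family approach, but your ``iterate and pass to a direct limit'' step hides real work: one must check that the process terminates (or that the limit is a genuine $\mathbb R$-tree in $\overline{\mathcal O(G,\mathcal F)}$), and that the quotient is still very small. Your justification that $\overline T\in\mathcal X(G,\mathcal F)$ is also missing; this does not follow automatically from $\overline T\in\overline{\mathcal O(G,\mathcal F)}$ plus mixing without invoking the third assertion, so the logical order needs care. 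For the third assertion, your contraposition is the right move, but the key step --- ``mixing in $T$ would propagate this edge across $G$-translates, forcing $S$ itself to collapse'' --- is where the content lies, and as written it is an assertion rather than an argument. One needs to show concretely that the preimages in $\hat T$ of points of $T$ are each sent into a single vertex of $S$; this typically uses that mixing trees have dense orbits (hence trivial arc stabilizers) together with an analysis of the transverse covering of $\hat T$ induced by $S$.
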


We also mention the following fact about $\mathcal{Z}$-splittings of $(G,\mathcal{F})$. 

\begin{lemma}(Horbez \cite[Lemma 5.11]{Hor14-5})\label{stab-cyclic}
Let $S$ be a $\mathcal{Z}$-splitting of $(G,\mathcal{F})$. Then every edge stabilizer in $S$ is trivial, or contained in a proper $(G,\mathcal{F})$-free factor.
\end{lemma}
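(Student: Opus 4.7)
My plan is to produce a $(G,\mathcal{F})$-free splitting of $G$ in which the cyclic edge stabilizer $\langle c\rangle$ is elliptic; the stabilizer of its fixed vertex will then be a $(G,\mathcal{F})$-free factor containing $\langle c\rangle$, which I will argue is proper. First I would equivariantly collapse every edge orbit of $S$ other than the orbit of a given edge $e$ with nontrivial stabilizer; this reduces to a one-edge $\mathcal{Z}$-splitting $S'$ with edge stabilizer $\langle c\rangle$, presenting $G$ either as an amalgam $G=A\ast_{\langle c\rangle}B$ or as an HNN extension $G=A\ast_{\langle c\rangle}$. The case of trivial edge stabilizer is vacuous, so I assume $c\ne 1$, and by definition of a $\mathcal{Z}$-splitting, $c$ is nonperipheral.

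Next I would refine $S'$ by a blow-up: at each endpoint $v$ of $e$, with stabilizer $G_v$, attach a minimal $G_v$-tree with trivial edge stabilizers in which every subgroup of $\mathcal{F}_v$ and the cyclic group $\langle c\rangle$ is elliptic (in the HNN case, also the relevant conjugate $\langle tct^{-1}\rangle$). Such a tree exists because $\langle c\rangle$ is infinite cyclic, freely indecomposable, and nonperipheral, so it can be adjoined to $\mathcal{F}_v$ as a new peripheral factor, giving a well-defined relative Grushko decomposition of $G_v$. The edge $e$ is then attached at the vertex of each blow-up fixed by $\langle c\rangle$, producing a $(G,\mathcal{F})$-tree $\widehat S$ whose only edges with nontrivial stabilizer lie in the $G$-orbit of $e$.

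Then equivariantly collapsing the orbit of $e$ in $\widehat S$ yields a $(G,\mathcal{F})$-tree $T$ all of whose edges have trivial stabilizers, i.e., a $(G,\mathcal{F})$-free splitting. The two $\langle c\rangle$-fixed endpoints of $e$ are identified in $T$, and the stabilizer of their common image contains $\langle c\rangle$; this stabilizer $H$ is a $(G,\mathcal{F})$-free factor containing $\langle c\rangle$. I would then verify that $H$ is proper: it is nonperipheral because $c$ is, and it is not all of $G$ because $T$ has more than one vertex orbit---either the blow-up at some endpoint was nontrivial, or, in the degenerate case where both blow-ups are trivial (so $A$ and $B$ are free products of their peripheral pieces with $\langle c\rangle$), the amalgam $A\ast_{\langle c\rangle}B$ unfolds to a genuine free product exhibiting $\langle c\rangle$ itself as a proper free factor of $G$.

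The main obstacle I expect is justifying the blow-up step and handling the degenerate case cleanly. One must verify that adjoining the infinite cyclic nonperipheral subgroup $\langle c\rangle$ to $\mathcal{F}_v$ yields a valid input for a relative Grushko decomposition of $G_v$, and carefully check that the collapse in the third step produces a minimal free splitting. The HNN case requires a parallel construction because equivariant collapse of a loop in the quotient graph of groups is more subtle than in the amalgam case; one blows up using a Grushko tree of $A$ relative to both cyclic subgroups identified by the stable letter, and then collapses to unfold.
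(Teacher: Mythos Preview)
The paper does not prove this lemma; it merely quotes it from \cite[Lemma~5.11]{Hor14-5}. So there is no in-paper argument to compare against, and I evaluate your proposal directly.

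The gap is in the blow-up step. You assert that for \emph{each} endpoint $v$ of $e$ one may adjoin $\langle c\rangle$ to $\mathcal{F}_v$ and obtain a free factor system of $G_v$---equivalently, that $\langle c\rangle$ is a $(G_v,\mathcal{F}_v)$-free factor. Your justification (``$\langle c\rangle$ is infinite cyclic, freely indecomposable, and nonperipheral'') does not establish this, and it is false in general. Take $G=F_3=\langle a,b,d\rangle$ with $\mathcal{F}=\emptyset$ and the one-edge $\mathcal{Z}$-splitting
\[
F_3 \;=\; \langle a,b\rangle \,\ast_{\langle [a,b]\rangle}\, \langle [a,b],d\rangle .
\]
At the left vertex, $G_v=\langle a,b\rangle$ and $c=[a,b]$; but $[a,b]$ is not contained in any proper free factor of $F_2$, so there is \emph{no} $(G_v,\emptyset)$-free splitting in which $c$ is elliptic, and $\{[\langle c\rangle]\}$ is not a free factor system of $G_v$. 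The tree you want to attach at that vertex simply does not exist.

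What salvages your strategy is a relative version of Shenitzer's lemma: if $G=A\ast_{\langle c\rangle}B$ (or the HNN analogue) with $c$ nonperipheral and $(G,\mathcal{F})$ admitting a Grushko tree, then $\langle c\rangle$ is a relative free factor of \emph{at least one} of the two vertex groups. One then blows up at that vertex only and collapses $e$; the other vertex group becomes a proper $(G,\mathcal{F})$-free factor containing $c$. In the example, $\langle [a,b]\rangle$ is visibly a free factor of the right-hand vertex group $\langle [a,b],d\rangle$, and the blow-up--collapse produces the free splitting $F_3=\langle a,b\rangle\ast\langle d\rangle$, exhibiting $\langle a,b\rangle$ as the desired proper free factor. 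You correctly flagged the blow-up as the delicate point, but the obstruction is not a matter of bookkeeping: it is the need for Shenitzer's lemma, which is where the actual content of the proof lies.
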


\subsection{Transverse families, transverse coverings, graphs of actions}

Let $T$ be a $(G,\mathcal{F})$-tree. A \emph{transverse family} in $T$ is a $G$-invariant collection $\mathcal{Y}$ of nondegenerate (i.e. nonempty and not reduced to a point) subtrees of $T$, such that for all $Y\neq Y'\in\mathcal{Y}$, the intersection $Y\cap Y'$ contains at most one point.

A \emph{transverse covering} of $T$ is a transverse family $\mathcal{Y}$ in $T$, all of whose elements are closed subtrees of $T$, such that every finite arc in $T$ can be covered by finitely many elements of $\mathcal{Y}$. A transverse covering $\mathcal{Y}$ of $T$ is \emph{trivial} if $\mathcal{Y}=\{T\}$. The \emph{skeleton} of a transverse covering $\mathcal{Y}$ is the bipartite simplicial tree $S$, whose vertex set is $V(S)=V_0(S)\cup\mathcal{Y}$, where $V_0(S)$ is the set of points of $T$ which belong to at least two distinct trees in $\mathcal{Y}$, with an edge between $x\in V_0(S)$ and $Y\in\mathcal{Y}$ whenever $x\in Y$ \cite[Definition 4.8]{Gui04}. 
\\
\\
\indent Let $G$ be a countable group, and $\mathcal{F}$ be a free factor system of $G$. A \emph{$(G,\mathcal{F})$-graph of actions} consists of

\begin{itemize}
\item a metric graph of groups $\mathcal{G}$ (in which we allow some edges to have length $0$), with an isomorphism from $G$ to the fundamental group of $\mathcal{G}$, such that all peripheral subgroups are conjugate into vertex groups of $\mathcal{G}$, and 

\item an isometric action of every vertex group $G_v$ on a $G_v$-tree $T_v$ (possibly reduced to a point), in which all intersections of $G_v$ with peripheral subgroups of $G$ are elliptic, and

\item a point $p_e\in T_{t(e)}$ fixed by $i_e(G_e)\subseteq G_{t(e)}$ for every oriented edge $e$, where $i_e:G_e\to G_{t(e)}$ denotes the inclusion morphism from the edge group $G_e$ into the adjacent vertex group $G_{t(e)}$.
\end{itemize}

A $(G,\mathcal{F})$-graph of actions is \emph{nontrivial} if $\mathcal{G}$ is not reduced to a point. Associated to any $(G,\mathcal{F})$-graph of actions $\mathcal{G}$ is a $(G,\mathcal{F})$-tree $T(\mathcal{G})$. Informally, the tree $T(\mathcal{G})$ is obtained from the Bass--Serre tree of the underlying graph of groups by equivariantly attaching each vertex tree $T_v$ at the corresponding vertex $v$, an incoming edge being attached to $T_v$ at the prescribed attaching point. The reader is referred to \cite[Proposition 3.1]{Gui98} for a precise description of the tree $T(\mathcal{G})$. We say that a $(G,\mathcal{F})$-tree $T$ \emph{splits as a $(G,\mathcal{F})$-graph of actions} if there exists a $(G,\mathcal{F})$-graph of actions $\mathcal{G}$ such that $T=T({\mathcal{G}})$.

\begin{prop} (Guirardel \cite[Lemma 1.5]{Gui08})\label{skeleton}
A $(G,\mathcal{F})$-tree splits as a nontrivial $(G,\mathcal{F})$-graph of actions if and only if it admits a nontrivial transverse covering.
\end{prop}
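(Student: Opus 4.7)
The statement is an ``if and only if'', so I would prove the two implications separately.

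For the forward direction, suppose $T = T(\mathcal{G})$ for a nontrivial $(G,\mathcal{F})$-graph of actions $\mathcal{G}$. By construction, each vertex tree $T_v$ embeds canonically as a closed subtree of $T$, and translating these by $G$ produces a $G$-invariant collection $\mathcal{Y}$ consisting of the nondegenerate vertex trees together with the closed segments corresponding to Bass--Serre edges of positive length. I would check that this is a transverse covering: two distinct members of $\mathcal{Y}$ meet in at most one point, because two distinct vertex-tree copies in $T(\mathcal{G})$ are separated by at least one Bass--Serre edge and are glued only at attaching points $p_e$; and any finite arc $I \subseteq T$ projects to a finite path in the Bass--Serre tree of $\mathcal{G}$, so $I$ is covered by finitely many pieces of $\mathcal{Y}$. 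Nontriviality of $\mathcal{G}$ (as a graph of groups together with vertex actions) ensures $\mathcal{Y} \neq \{T\}$.

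For the reverse direction, suppose $T$ admits a nontrivial transverse covering $\mathcal{Y}$. I would form the skeleton $S$ of $\mathcal{Y}$, which is a bipartite simplicial $G$-tree with vertex set $V_0(S) \cup \mathcal{Y}$. Taking the quotient $S/G$ gives a graph of groups $\mathcal{G}$ whose vertex groups are either stabilizers of points $x \in V_0(S)$ or stabilizers $\mathrm{Stab}(Y)$ for $Y \in \mathcal{Y}$, and whose edge groups are stabilizers of incidences $x \in Y$. For a vertex of the first kind I assign the trivial (point) vertex tree; for a vertex coming from $Y \in \mathcal{Y}$ I assign the $\mathrm{Stab}(Y)$-tree $Y$ itself. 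The attaching point $p_e$ associated to an oriented edge $e$ from some $Y$ to some $x$ is simply $x$, viewed as a point of $Y$. I would then verify the $(G,\mathcal{F})$-graph of actions conditions: peripheral subgroups fix points of $T$ (by definition of $(G,\mathcal{F})$-tree), and any such fixed point lies in some $Y \in \mathcal{Y}$ since $\mathcal{Y}$ covers finite arcs, so peripheral subgroups act elliptically on vertex trees, and their images in $\mathcal{G}$ sit inside vertex groups.

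It then remains to identify $T$ with $T(\mathcal{G})$. The natural $G$-equivariant map $T(\mathcal{G}) \to T$ sending each vertex tree of $\mathcal{G}$ to the corresponding $Y \in \mathcal{Y}$ and sending each Bass--Serre edge of $S$ to the unique arc in $T$ joining the relevant attaching points is isometric on each piece; one checks it is globally isometric using the defining property of the skeleton (namely that points of $V_0(S)$ exactly record the gluings between distinct members of $\mathcal{Y}$) and the fact that the transverse family has pairwise intersections of cardinality at most one.

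The main obstacle I expect is the reverse direction, specifically the verification that the reconstructed $T(\mathcal{G})$ actually coincides with $T$. This requires showing that the combinatorics captured by the skeleton, together with the internal geometry of each $Y \in \mathcal{Y}$, is enough to reproduce $T$; equivalently, that no ``hidden'' identifications occur in $T$ beyond those recorded by $V_0(S)$. The transverse family property (pairwise intersections at most a single point) is exactly what rules this out, and the covering property ensures the reconstructed tree is all of $T$ rather than a proper subtree. Nontriviality of the constructed graph of actions follows from $\mathcal{Y} \neq \{T\}$, since this forces either multiple orbits of vertex trees or at least one positive-length Bass--Serre edge in $S/G$.
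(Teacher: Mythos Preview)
The paper does not prove this proposition at all; it is simply quoted from Guirardel \cite[Lemma 1.5]{Gui08} and used as a black box. So there is nothing in the paper to compare your argument against. That said, your outline is essentially the standard proof one finds in Guirardel's work: build the skeleton of the transverse covering, take its quotient as the underlying graph of groups, put the subtrees $Y$ as vertex trees, and reconstruct $T$.

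A few points where your write-up is slightly loose. First, in the reverse direction the Bass--Serre edges of the skeleton $S$ should all be assigned length $0$: an edge of $S$ joins a point $x\in V_0(S)$ to a tree $Y$ containing it, so the two attaching points coincide in $T$. Your sentence about ``sending each Bass--Serre edge of $S$ to the unique arc in $T$ joining the relevant attaching points'' suggests a nondegenerate arc, which is misleading. Second, you should say a word about why the skeleton $S$ is actually a simplicial \emph{tree} and why the $G$-action on it is minimal; both are needed for $\mathcal{G}$ to be a genuine $(G,\mathcal{F})$-graph of actions and for $T(\mathcal{G})$ to be a minimal $(G,\mathcal{F})$-tree. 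Third, your check that peripheral subgroups are elliptic in $S$ is a bit quick: if a peripheral subgroup $P$ fixes $x\in T$ and $x$ lies in a unique $Y\in\mathcal{Y}$, then $P$ stabilizes $Y$; if $x$ lies in at least two members of $\mathcal{Y}$, then $x\in V_0(S)$ and $P$ fixes that vertex of $S$. Either way $P$ is elliptic in $S$, but this case distinction is worth making explicit. None of these are genuine gaps, just places where the exposition should be tightened.
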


Knowing that a $(G,\mathcal{F})$-tree $T$ is compatible with a simplicial $(G,\mathcal{F})$-tree $S$ provides a nontrivial transverse covering of $T$, defined in the following way (see the discussion in \cite[Section 4.7]{Hor14-6}). Since $T$ and $S$ are compatible, their length functions sum up to the length function of a $(G,\mathcal{F})$-tree, denoted by $T+S$, which comes with $1$-Lipschitz alignment-preserving maps $\pi_T:T+S\to T$ and $\pi_S:T+S\to S$, see \cite[Section 3.2]{GL10-2}. Then the family $\mathcal{Y}$ made of all nondegenerate $\pi_S$-preimages of vertices of $S$, and of the closures of $\pi_S$-preimages of open edges of $S$, is a transverse covering of $T+S$. Its image $\pi_T(\mathcal{Y})$ is a nontrivial transverse covering of $T$. 

We now mention a result, due to Levitt \cite{Lev94}, which gives a canonical way of splitting any very small $(G,\mathcal{F})$-tree as a $(G,\mathcal{F})$-graph of actions, whose vertex actions have dense orbits. 

\begin{prop} (Levitt \cite{Lev94})\label{Levitt}
Every $(G,\mathcal{F})$-tree $T\in\overline{\mathcal{O}(G,\mathcal{F})}$ splits uniquely as a $(G,\mathcal{F})$-graph of actions, all of whose vertex trees have dense orbits for the action of their stabilizer (they might be reduced to points), and all of whose edges have positive length, and have either trivial, or maximally-cyclic and nonperipheral stabilizer.
\end{prop}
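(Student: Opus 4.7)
The plan is to decompose $T$ canonically by isolating its maximal subtrees on which some subgroup acts with dense orbits, and to show that these subtrees form a transverse covering so that Proposition \ref{skeleton} (Guirardel's skeleton construction) produces the desired graph of actions.

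More precisely, I would first let $\mathcal{D}$ denote the collection of all closed nondegenerate subtrees $Y\subseteq T$ such that the setwise stabilizer $\mathrm{Stab}(Y)$ acts on $Y$ minimally and with dense orbits. A Zorn's lemma argument, using the fact that an increasing union of dense-orbit subtrees is again a dense-orbit subtree (the stabilizer of the union contains each of the stabilizers, and density passes to limits along the union), shows that every element of $\mathcal{D}$ is contained in a maximal one. Let $\mathcal{Y}$ be the set of maximal elements of $\mathcal{D}$; note that $\mathcal{Y}$ is $G$-invariant, since $G$ permutes $\mathcal{D}$.

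The main obstacle is to show that $\mathcal{Y}$ is a transverse family: if $Y_1,Y_2\in\mathcal{Y}$ met in a nondegenerate arc $I$, I would argue that the stabilizer of $Y_1\cup Y_2$ (which contains the group generated by $\mathrm{Stab}(Y_1)$ and $\mathrm{Stab}(Y_2)$) acts on $Y_1\cup Y_2$ with dense orbits. The point is that both $\mathrm{Stab}(Y_i)$-orbits are dense in the common arc $I$, so iterating translations from $\mathrm{Stab}(Y_1)$ and $\mathrm{Stab}(Y_2)$ produces orbits that are dense across both subtrees; this contradicts maximality. Next, I would verify that $\mathcal{Y}$ is a transverse covering, i.e. that every finite arc $J\subseteq T$ is covered by finitely many elements of $\mathcal{Y}$ together with finitely many arcs on which the action is ``simplicial'' in the sense that $J$ meets only finitely many $G$-translates of $\mathcal{Y}$-elements; this relies on a standard compactness/finiteness argument using very smallness of $T$, and on the fact that arcs not contained in any element of $\mathcal{Y}$ cannot be limits of dense-orbit behavior (otherwise they would generate a new element of $\mathcal{D}$ contradicting maximality).

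Given that $\mathcal{Y}$ is a transverse covering, Proposition \ref{skeleton} provides a splitting of $T$ as a nontrivial $(G,\mathcal{F})$-graph of actions whose vertex trees are the elements of $\mathcal{Y}$ (together with possibly trivial trees at the branch vertices of the skeleton), and whose underlying skeleton is simplicial. By construction, the nontrivial vertex trees have dense orbits, and the edges of the skeleton correspond to arcs of $T$ that lie outside every $Y\in\mathcal{Y}$, hence have positive length. Since $T$ is very small by \cite[Theorem 0.1]{Hor14-5}, its arc stabilizers, and a fortiori its edge stabilizers in the skeleton, are either trivial or maximally-cyclic and nonperipheral. Peripheral subgroups are elliptic in $T$ by definition and therefore elliptic in the skeleton, so $\mathcal{G}$ is genuinely a $(G,\mathcal{F})$-graph of actions.

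Finally, for uniqueness, I would observe that in any decomposition of $T$ as a graph of actions with dense-orbit vertex trees and positive edge lengths, each nondegenerate vertex tree must be a maximal element of $\mathcal{D}$: it is an element of $\mathcal{D}$ by hypothesis, and maximality follows because any strictly larger dense-orbit subtree would cross a positive-length edge of the skeleton, contradicting the graph of actions structure. Hence the nontrivial vertex trees of any such decomposition coincide with $\mathcal{Y}$, and the underlying skeleton is forced to be the one associated to the transverse covering $\mathcal{Y}$, proving uniqueness.
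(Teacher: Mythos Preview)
The paper does not actually prove this proposition: it is quoted as a result of Levitt \cite{Lev94} and used as a black box. So there is no ``paper's own proof'' to compare against; I will just assess your sketch on its merits.

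Your overall strategy (isolate maximal dense-orbit subtrees, show they form a transverse covering, invoke the skeleton construction) is in the right spirit, but the Zorn's lemma step contains a real gap. You assert that for an increasing chain $Y_1\subset Y_2\subset\cdots$ in $\mathcal{D}$, ``the stabilizer of the union contains each of the stabilizers.'' This is not true in general: if $g\in\mathrm{Stab}(Y_i)$ and $j>i$, all you know is $gY_i=Y_i\subset Y_j$, hence $gY_j\cap Y_j\supseteq Y_i$; there is no reason $g$ should preserve $Y_j$, let alone the full union. Without this inclusion, you have no candidate group acting with dense orbits on $\overline{\bigcup Y_\alpha}$, and the chain need not have an upper bound in $\mathcal{D}$. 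The ``density passes to limits'' slogan hides exactly this issue.

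Levitt's original argument avoids Zorn entirely and proceeds more concretely: one separates $T$ into its ``simplicial part'' (points having a neighborhood meeting only finitely many $G$-orbits of branch/singular points, equivalently points whose image in $T/G$ has a neighborhood which is a finite graph) and its complement. The connected components of the complement are then shown directly to be subtrees on which the stabilizer acts with dense orbits, and the simplicial part supplies the positive-length edges. This local characterization is what makes the transverse covering property and finiteness of edge orbits come out cleanly; your ``standard compactness/finiteness argument using very smallness'' is gesturing at this but is doing most of the real work and would need to be made explicit. Once the decomposition is obtained this way, uniqueness follows exactly as you indicate, since any dense-orbit vertex tree is forced to sit inside the non-simplicial part and, by maximality, to be a component of it.
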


We call this splitting the \emph{Levitt decomposition} of $T$ as a graph of actions. We note in particular that if $T\in\overline{\mathcal{O}(G,\mathcal{F})}$ is a very small $(G,\mathcal{F})$-tree, and $H\subseteq G$ is a subgroup of $G$ of finite Kurosh rank, then the $H$-minimal subtree of $T$ admits a Levitt decomposition.

\begin{lemma}\label{nonsimplicial}
Let $T\in\overline{\mathcal{O}(G,\mathcal{F})}$ be a tree with dense orbits. Let $\mathcal{Y}$ be a transverse family in $T$, and let $Y\in\mathcal{Y}$. If $\text{rk}_K(\text{Stab}(Y))<+\infty$, then the action of $\text{Stab}(Y)$ on $Y$ has dense orbits. If $\text{Stab}(Y)$ is contained in a proper $(G,\mathcal{F})$-free factor $H$, then the $H$-minimal subtree of $T$ is not a Grushko $(H,\mathcal{F}_H)$-tree. 
\end{lemma}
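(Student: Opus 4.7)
Both assertions will be proved by contradiction, using Proposition~\ref{Levitt} and the transversality of $\mathcal{Y}$. The guiding principle is that since $T$ has dense orbits, its canonical Levitt decomposition is trivial (no positive-length edges); so any construction producing a nontrivial $(G,\mathcal{F})$-graph of actions on $T$ with dense-orbit vertex trees and positive-length edges will yield a contradiction.

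For the first assertion, I would suppose for contradiction that $\text{Stab}(Y)$ has finite Kurosh rank but does not act on $Y$ with dense orbits. Let $Y_0 \subseteq Y$ be the $\text{Stab}(Y)$-minimal subtree. By the remark following Proposition~\ref{Levitt}, $Y_0$ admits a Levitt decomposition as a $(\text{Stab}(Y),\mathcal{F}_{\text{Stab}(Y)})$-graph of actions. Since the action of $\text{Stab}(Y)$ on $Y_0$ is not with dense orbits, this decomposition is nontrivial, so it has dense-orbit vertex trees $U_1, U_2, \dots \subseteq Y_0$ joined by positive-length edges; let $e$ be such an edge, separating $U_1$ from $U_2$. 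The goal is to assemble the $G$-translates of these vertex trees into a transverse family $\mathcal{U}$ in $T$, and to check that together with the $G$-translates of $e$ they form a nontrivial transverse covering of $T$. The transverse family property holds because two elements of $\mathcal{U}$ contained in the same $gY$ are transverse by the Levitt structure inside $gY$, while elements in different $\mathcal{Y}$-orbits $g_1Y \neq g_2Y$ have intersection contained in $g_1Y \cap g_2Y$, which is at most a point. Once a transverse covering is in hand, Proposition~\ref{skeleton} produces a nontrivial splitting of $T$ as a $(G,\mathcal{F})$-graph of actions, whose vertex trees have dense orbits (by construction) and whose edges have positive length (they are $G$-translates of $e$). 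By the uniqueness clause in Proposition~\ref{Levitt}, this must be the canonical Levitt decomposition of $T$; but since $T$ has dense orbits that canonical decomposition is trivial, a contradiction.

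For the second assertion, assume $T_H$ is Grushko. Since $\text{Stab}(Y) \subseteq H$, the $\text{Stab}(Y)$-minimal subtree $Y_0$ lies in the $H$-minimal subtree $T_H$, hence in $Y \cap T_H$. As a subtree of the simplicial Grushko tree $T_H$, the subtree $Y_0$ is itself simplicial with trivial edge stabilizers. If $\text{Stab}(Y)$ has finite Kurosh rank, Part~(1) asserts dense $\text{Stab}(Y)$-orbits on $Y$, incompatible with a nondegenerate simplicial $Y_0$. If instead $\text{Stab}(Y)$ has infinite Kurosh rank, the same construction as in Part~(1) can be run directly, using the simplicial edges already visible in $Y_0 \subseteq T_H$ in place of the Levitt edges: the $G$-translates of such an edge, together with the transverse structure inherited from $\mathcal{Y}$ and the simplicial vertex subtrees of $Y_0$, assemble into a nontrivial transverse covering of $T$ with positive-length edges, again contradicting the triviality of the Levitt decomposition of $T$.

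\textbf{Main obstacle.} The technical heart of the argument is the passage from a $G$-equivariant transverse \emph{family} built out of the vertex trees $gU_i$ to an actual transverse \emph{covering} of $T$, as required to apply Proposition~\ref{skeleton}. This needs careful bookkeeping: one must include sufficiently many subtrees (the $G$-translates of the edges, and pieces of $T$ outside $G\cdot Y$) while preserving pairwise transversality, and verify that the resulting graph of actions has dense-orbit vertex trees of positive length so as to invoke uniqueness of Levitt's decomposition. The alignment-preserving machinery recalled in Section~\ref{sec-hyp-fz} for producing transverse coverings from compatible simplicial trees is expected to be the right tool here.
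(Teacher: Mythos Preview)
The obstacle you identify is a genuine gap, not mere bookkeeping. The hypothesis is only that $\mathcal{Y}$ is a transverse \emph{family}; nothing forces $G\cdot Y$ to cover $T$, and you have no control over the complement. To invoke the uniqueness clause of Proposition~\ref{Levitt} you would need every piece you add outside $G\cdot Y$ to have dense orbits under its stabilizer, and there is no mechanism in your setup to arrange this. The machinery from Section~\ref{sec-hyp-fz} that you propose produces transverse coverings of $T$ from compatibility with a simplicial \emph{$(G,\mathcal{F})$-tree}; but $e$ is simplicial only for the $\text{Stab}(Y)$-action on $Y$, not for the $G$-action on $T$, and nothing in the hypotheses hands you a simplicial $(G,\mathcal{F})$-tree compatible with $T$, so that tool does not apply. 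The same gap recurs in your treatment of the second assertion in the case $\text{rk}_K(\text{Stab}(Y))=\infty$.

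The paper avoids any global covering with a short local argument. If either conclusion fails, $Y$ contains an arc $e$ that is simplicial for the $\text{Stab}(Y)$-action. Two external facts then do all the work: $T$ has only finitely many $G$-orbits of directions at branch points, and trees with dense orbits have trivial arc stabilizers. From the first, combined with density of $G$-orbits, one finds branch points $x\neq x'$ of $T$ lying on $e$ and directions $d,d'$ at them with $gd=d'$ for some $g\in G\smallsetminus\{1\}$. Then $gY\cap Y$ is nondegenerate, so transversality of $\mathcal{Y}$ forces $g\in\text{Stab}(Y)$; hence $ge$ is a $\text{Stab}(Y)$-simplicial edge of $Y$ meeting $e$, so $ge=e$, and $g$ is a nontrivial element stabilizing the arc $e$. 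The second fact gives the contradiction. Neither of these two inputs appears anywhere in your plan; they are precisely what replaces the global covering you were trying to build.
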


\begin{proof}
Assume that one of the conclusions of the lemma fails. Then $Y$ has a nontrivial simplicial part, which contains a simplicial edge $e$. There is a finite number of $G$-orbits of directions at branch points in $T$ \cite[Corollary 4.8]{Hor14-5}. As $T$ has dense orbits, the arc $e$ contains two distinct branch points $x$ and $x'$ of $T$, and two directions $d$ (resp. $d'$) at $x$ (resp. $x'$), such that there exists $g\in G\smallsetminus\{1\}$ with $gd=d'$. In particular, the intersection $gY\cap Y$ is nondegenerate (i.e. nonempty and not reduced to a point). As $\mathcal{Y}$ is a transverse family, this implies that $g\in\text{Stab}(Y)$. So $ge$ is a simplicial edge of $Y$ that meets $e$, and therefore $ge=e$. This implies that $T$ contains an arc with nontrivial stabilizer, which is impossible because $T$ has dense orbits \cite[Proposition 4.17]{Hor14-5}.
\end{proof}

\subsection{Trees of surface type}

\begin{de}
A tree $T\in\overline{\mathcal{O}(G,\mathcal{F})}$ is \emph{of surface type} if it admits a transverse covering by trees that are either simplicial arcs, or are dual to arational measured foliations on compact $2$-orbifolds.
\end{de}

\begin{prop}(Horbez \cite[Proposition 5.10]{Hor14-5})\label{outer-limits}
Let $T$ be a minimal, very small $(G,\mathcal{F})$-tree of surface type, and let $\mathcal{Y}$ be the associated transverse covering of $T$. Then either 
\begin{itemize}
\item there exists an element of $G$, represented by a boundary curve of one of the orbifolds dual to a tree in $\mathcal{Y}$, that is nonperipheral, and not conjugate into any edge group of the skeleton of $\mathcal{Y}$, or 
\item the tree $T$ splits as a $(G,\mathcal{F})$-graph of actions over a one-edge $(G,\mathcal{F})$-free splitting $S$, such that all stabilizers of subtrees in $\mathcal{Y}$ dual to arational foliations on compact $2$-orbifolds are elliptic in $S$.
\end{itemize}
\end{prop}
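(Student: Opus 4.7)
My plan is to establish the dichotomy by assuming the first alternative fails and producing the one-edge $(G,\mathcal{F})$-free splitting promised by the second alternative. So assume henceforth that every element of $G$ represented by a boundary curve of an orbifold dual to some $Y\in\mathcal{Y}$ is either peripheral in $G$, or conjugate into an edge group of the skeleton $S_0$ of $\mathcal{Y}$.

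I would begin by recording what the general theory already gives. By Proposition \ref{skeleton}, $T$ splits as a $(G,\mathcal{F})$-graph of actions over $S_0$, whose vertex trees are the elements of $\mathcal{Y}$ (together with the points of $V_0(S_0)$); edge stabilizers in $S_0$ coincide with arc stabilizers of $T$, so by very-smallness of $T$ they are either trivial or maximally-cyclic nonperipheral. The key preliminary observation is that for each $Y\in\mathcal{Y}$ dual to an arational measured foliation on a compact $2$-orbifold $O$, a boundary curve of $O$ that is transverse to the foliation acts hyperbolically on $Y$, hence on $T$. Such an element is nonperipheral and, being hyperbolic in $T$, cannot lie in any edge group of $S_0$ (whose elements are elliptic in $T$). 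This would contradict the failure of the first alternative. So every boundary curve of every orbifold dual to some $Y\in\mathcal{Y}$ must be a leaf of its foliation, hence elliptic in $T$; it fixes the attaching point of $Y$ with a neighboring tree in $\mathcal{Y}$, placing its stabilizer either inside a peripheral subgroup or inside the corresponding (cyclic) edge group of $S_0$.

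I would then produce the desired one-edge free splitting $S$ by an equivariant two-stage collapse of $S_0$. First, I equivariantly collapse every edge orbit of $S_0$ whose nontrivial cyclic stabilizer is generated (up to powers) by a boundary curve of an orbifold, merging each such edge with its adjacent orbifold-type vertex. This absorption is legitimate: the boundary curves in question already fix the adjacent orbifold subtree, so no new non-elliptic peripheral subgroup is introduced, and the resulting simplicial $(G,\mathcal{F})$-tree has only trivial edge stabilizers. Second, I equivariantly collapse all but one of the remaining edge orbits to obtain a one-edge $(G,\mathcal{F})$-free splitting $S$. By construction, each orbifold-dual subtree $Y\in\mathcal{Y}$ has been collapsed into a single vertex of $S$, so $\mathrm{Stab}(Y)$ is elliptic in $S$; and $T$ splits as a $(G,\mathcal{F})$-graph of actions over $S$ by Proposition \ref{skeleton}, the required transverse covering of $T$ being obtained by pulling back, through the collapse, the transverse covering of $T$ compatible with $S_0$.

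The main obstacle I expect is justifying that the two-stage collapse yields a nontrivial $(G,\mathcal{F})$-tree, i.e.\ that at least one edge orbit with trivial stabilizer survives the first stage. One must rule out the degenerate configuration where all edge orbits of $S_0$ are absorbed into orbifold vertices, corresponding to $T$ being essentially a single orbifold action whose boundary is entirely swallowed by peripheral subgroups. This sporadic case can be handled directly: under the failure of the first alternative, one builds a one-edge $(G,\mathcal{F})$-free splitting by cutting the orbifold along an essential arc joining two peripheral boundary components, which yields the required free splitting whose compatibility with $T$ follows from arationality of the dual foliation.
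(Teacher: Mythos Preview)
The paper does not give a proof of this proposition: it is quoted verbatim from \cite[Proposition~5.10]{Hor14-5} and used as a black box. So there is no in-paper proof to compare against.

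As for your argument on its own merits, the overall strategy of collapsing the skeleton $S_0$ to a one-edge free splitting is reasonable, but the handling of the degenerate case at the end is wrong. You propose, when all edge orbits of $S_0$ get absorbed into a single orbifold vertex, to cut the orbifold along an essential arc to produce a one-edge $(G,\mathcal{F})$-free splitting $S$. But the conclusion you are trying to establish requires that the stabilizer of the orbifold-dual subtree $Y$ be \emph{elliptic} in $S$; cutting the orbifold along an arc splits $\pi_1(O)=\mathrm{Stab}(Y)$ nontrivially, so $\mathrm{Stab}(Y)$ is certainly not elliptic in the resulting splitting. What actually happens in that degenerate configuration is that the \emph{first} alternative of the proposition holds (compare the construction of arational surface trees in Section~\ref{sec-4}, where one boundary curve $b_0$ is always left unused and is nonperipheral and not in any edge group). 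Your proof needs to show that if the first alternative genuinely fails, this degenerate case cannot arise, rather than trying to force the second alternative in a situation where it does not hold.

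A secondary gap: in your stage-one collapse you assert that the surviving edges all have trivial stabilizer, but you only treat edges adjacent to an orbifold vertex. Edges of $S_0$ joining a $V_0$-point to a simplicial-arc subtree can have nontrivial cyclic stabilizer (the arc stabilizer in $T$), and you have not explained why those stabilizers are also absorbed; this requires tracking how such arcs meet the orbifold pieces.
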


\begin{prop}(Horbez \cite[Lemma 5.8]{Hor14-5})\label{surface-type}
Let $T\in\overline{\mathcal{O}(G,\mathcal{F})}$. If there exists a subgroup $H\subseteq G$ that is elliptic in $T$, and not contained in any proper $(G,\mathcal{F})$-free factor, then $T$ is of surface type.
\end{prop}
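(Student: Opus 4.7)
The plan is to apply Levitt's canonical decomposition (Proposition~\ref{Levitt}) to $T$, writing it as a $(G,\mathcal{F})$-graph of actions whose vertex trees $Y_v$ are either reduced to points or have dense orbits under their stabilizers, and whose edges carry either trivial or maximal-cyclic nonperipheral stabilizers. Let $\mathcal{Y}$ denote the associated transverse covering and $S$ its skeleton; then $S$ is a $\mathcal{Z}$-splitting of $(G,\mathcal{F})$. The strategy is to combine the filling hypothesis on $H$ with Lemma~\ref{stab-cyclic} to force every non-simplicial piece of $\mathcal{Y}$ to be dual to an arational measured foliation on a $2$-orbifold.

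First I would locate the fixed point $v\in T$ of $H$ inside the decomposition. If $v$ lay in the interior of a simplicial edge $e$ of $T$, then after possibly passing to the index-$2$ subgroup avoiding an inversion, $H$ would be contained in the edge stabilizer of $e$ in the $\mathcal{Z}$-splitting $S$; Lemma~\ref{stab-cyclic} would then place $H$ in a proper $(G,\mathcal{F})$-free factor, contradicting the hypothesis. Similarly, if $v$ were a degenerate vertex tree (a single point) of the Levitt decomposition, then $H$ would be contained in the $S$-stabilizer of that vertex, which is adjacent to at least one edge of $S$ (since the decomposition is nontrivial); one can arrange that $H$ lies in the stabilizer of an adjacent edge up to enlargement, again contradicting Lemma~\ref{stab-cyclic}. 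Hence $v$ lies inside a vertex tree $Y\in\mathcal{Y}$ with dense $\mathrm{Stab}(Y)$-orbits, and $H\subseteq\mathrm{Stab}(Y)$.

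Next I would argue that every dense-orbit vertex tree $Y$ of the Levitt decomposition is dual to an arational measured foliation on a compact $2$-orbifold. Running the Rips machine on the action of $\mathrm{Stab}(Y)$ on $Y$ decomposes $Y$ as a graph of actions whose vertex pieces are of simplicial, surface, or axial/exotic type. Axial or exotic pieces would produce a nontrivial transverse family in $Y$ whose skeleton is a further $\mathcal{Z}$-splitting of the stabilizer (obtained from $S$ by refinement), and its edge and non-surface vertex stabilizers would all be proper subgroups of $G$ contained in proper $(G,\mathcal{F})$-free factors by Lemma~\ref{stab-cyclic}; Lemma~\ref{nonsimplicial} together with the filling property of $H$ would then rule these pieces out, since the only piece of the refined covering that may contain $v$ must have stabilizer not contained in any proper free factor. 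The same argument rules out a nontrivial invariant subfoliation of the surface piece containing $v$, yielding arationality.

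The main obstacle is the Rips-theoretic analysis in the last step: one must verify that each non-surface piece of the Rips decomposition of a very small dense-orbit $(G,\mathcal{F})$-tree carries a stabilizer that is either trivial, maximal-cyclic nonperipheral, or peripheral, so that Lemma~\ref{stab-cyclic} genuinely applies and forces such a piece into a proper free factor. Granting this, the only way $H$ can be both elliptic in $T$ and filling is for its fixed point to sit in a simplicial arc or in an arational surface component of $\mathcal{Y}$, which is exactly the statement that $T$ is of surface type.
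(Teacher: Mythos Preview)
The paper does not prove this proposition; it is quoted from \cite[Lemma 5.8]{Hor14-5}, so there is no ``paper's own proof'' to compare against here. I will therefore assess your argument on its merits.

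Your overall strategy has a structural gap. The filling hypothesis on $H$ constrains only the piece of the Levitt/Rips decomposition that contains the fixed point $v$ of $H$. It says nothing about the other dense-orbit vertex trees $Y'$ of the decomposition. Even if you successfully argue that the piece containing $v$ is a surface piece, another vertex tree $Y'$ could have an axial or exotic Rips component, and then $T$ is not of surface type. Your sentence ``the only piece of the refined covering that may contain $v$ must have stabilizer not contained in any proper free factor'' precisely isolates the problem: it rules out bad pieces \emph{at $v$}, not elsewhere. What is actually needed is a global statement: whenever some Rips piece of some vertex tree is non-surface, one can produce a $(G,\mathcal{F})$-free splitting in which \emph{every} point stabilizer of $T$ remains elliptic (not just the stabilizers in that one piece). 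Then $H$, being elliptic in $T$, is elliptic in that free splitting and hence lies in a proper free factor, a contradiction. This is the content of the cited lemma, and it is where the real work lies; your sketch does not supply it.

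There is also a local error in your treatment of the case where $v$ is a degenerate vertex tree (a single point) of the Levitt decomposition. You write that ``one can arrange that $H$ lies in the stabilizer of an adjacent edge up to enlargement'', but this is false: the vertex group $G_v$ of the skeleton $S$ can be much larger than any adjacent (cyclic or trivial) edge group, and $H\subseteq G_v$ does not force $H$ into an edge group. Lemma~\ref{stab-cyclic} applies to edge stabilizers of $\mathcal{Z}$-splittings, not to vertex stabilizers, so you cannot invoke it here. In fact this case is perfectly compatible with $T$ being of surface type (think of $H$ fixing an attaching point in an arational surface tree), so there is nothing to contradict; the case simply has to be absorbed into the global argument above rather than ruled out.
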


\section{Sporadic cases}\label{sec-2}

Let $G$ be a countable group, and let $\mathcal{F}$ be a free factor system of $G$. We say that $(G,\mathcal{F})$ is \emph{sporadic} if either $G=G_1\ast G_2$ and $\mathcal{F}=\{[G_1],[G_2]\}$, or $G=G_1\ast$ and $\mathcal{F}=\{[G_1]\}$. Otherwise $(G,\mathcal{F})$ is \emph{nonsporadic}. We noticed in \cite[Corollary 5.8]{Hor14-6} that the graph $FZ(G,\mathcal{F})$ is unbounded if and only if $(G,\mathcal{F})$ is nonsporadic. Given a group $A$, we denote by $Z(A)$ its center. The following propositions, which describe $\text{Out}(G,\mathcal{F}^{(t)})$ when $(G,\mathcal{F})$ is sporadic, are particular cases of Levitt's work about automorphisms of graphs of groups \cite{Lev04}.

\begin{prop}\label{sporadic-1}
Let $G_1$ and $G_2$ be nontrivial countable groups. Then $\text{Out}(G_1\ast G_2,\{[G_1],[G_2]\}^{(t)})$ is isomorphic to $G_1/Z(G_1)\times G_2/Z(G_2)$.
\end{prop}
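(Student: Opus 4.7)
The plan is to construct an explicit isomorphism, in the spirit of Fouxe--Rabinovitch twists for automorphisms of graphs of groups. Given $a\in G_1$ and $b\in G_2$, I would define an endomorphism $\phi_{a,b}$ of $G$ via the universal property of the free product, setting $\phi_{a,b}(x)=axa^{-1}$ for $x\in G_1$ and $\phi_{a,b}(y)=byb^{-1}$ for $y\in G_2$. Since $a\in G_1$ and $b\in G_2$, the image contains $G_1$ and $G_2$, hence all of $G$; an inverse is given by $\phi_{a^{-1},b^{-1}}$, so $\phi_{a,b}\in\text{Aut}(G)$, and by construction it lies above $\text{Out}(G,\mathcal{F}^{(t)})$. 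A direct check yields $\phi_{a,b}\circ\phi_{a',b'}=\phi_{aa',bb'}$, so the assignment $(a,b)\mapsto[\phi_{a,b}]$ is a homomorphism $\Psi\colon G_1\times G_2\to\text{Out}(G,\mathcal{F}^{(t)})$.

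A key preliminary fact, proved by a standard normal-form argument in the free product and using that both $G_1$ and $G_2$ are nontrivial, is that the centralizer satisfies $C_G(G_i)=Z(G_i)$. Granting this, injectivity of $\Psi$ modulo the expected obstruction is immediate: if $\phi_{a,b}=\text{conj}_h$ for some $h\in G$, then comparing restrictions to $G_1$ and $G_2$ forces $h\in aZ(G_1)\cap bZ(G_2)\subseteq G_1\cap G_2=\{1\}$, whence $a\in Z(G_1)$ and $b\in Z(G_2)$. So $\Psi$ descends to an injection $G_1/Z(G_1)\times G_2/Z(G_2)\hookrightarrow\text{Out}(G,\mathcal{F}^{(t)})$.

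Surjectivity is the main step. Given $\Phi\in\text{Out}(G,\mathcal{F}^{(t)})$, pick a representative $\phi$ with $\phi|_{G_i}=\text{conj}_{g_i}|_{G_i}$ for some $g_i\in G$. I would pass to the Bass--Serre tree $T^{\text{def}}$ of $G=G_1\ast G_2$, with base vertices $v_1,v_2$ stabilized by $G_1,G_2$ respectively: the automorphism $\phi$ induces a $\phi$-equivariant simplicial isomorphism of $T^{\text{def}}$ that sends $v_i$ to $g_iv_i$, since this is the unique vertex with stabilizer $g_iG_ig_i^{-1}=\phi(G_i)$. The image of the base edge $[v_1,v_2]$ is then $[g_1v_1,g_2v_2]$, which must be a $G$-translate of $[v_1,v_2]$, so there exists $g\in G$ with $gv_1=g_1v_1$ and $gv_2=g_2v_2$, i.e.\ $g\in g_1G_1\cap g_2G_2$. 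This forces the algebraic identity $g_2^{-1}g_1\in G_2G_1$.

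Writing $g_2^{-1}g_1=b^{-1}a$ with $a\in G_1$ and $b\in G_2$, the element $h:=ag_1^{-1}=bg_2^{-1}$ satisfies $hg_1=a\in G_1$ and $hg_2=b\in G_2$. Hence $\text{conj}_h\circ\phi$ restricts to $\text{conj}_a|_{G_1}$ and $\text{conj}_b|_{G_2}$, and therefore coincides with $\phi_{a,b}$ by the universal property of the free product. This yields $\Phi=\Psi(\overline a,\overline b)$ and completes the proof. The main obstacle is this surjectivity step: the constraint $g_2^{-1}g_1\in G_2G_1$ is not easy to extract purely algebraically from the hypothesis on $\phi$, and the adjacency-preservation property of the induced tree automorphism is precisely what unlocks it.
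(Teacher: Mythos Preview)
Your argument is correct. The paper itself does not prove this proposition: it simply records it as a special case of Levitt's general description of automorphism groups of graphs of groups \cite{Lev04}. What you have written is essentially an explicit unpacking of Levitt's machinery in this one-edge case, exhibiting the Fouxe--Rabinovitch partial conjugations $\phi_{a,b}$ directly and computing the kernel and image by hand.

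One point deserves to be made explicit in your surjectivity step. You assert that $\phi$ induces a $\phi$-equivariant simplicial automorphism of $T^{\mathrm{def}}$, and then read off the adjacency constraint $g_2^{-1}g_1\in G_2G_1$ from the image of the base edge. The existence of such an automorphism is not a tautology: if one tries to \emph{define} $f$ by the formula $f(gv_i)=\phi(g)g_iv_i$, checking that it preserves adjacency amounts precisely to the constraint you are trying to derive. The correct justification is that the $G$-tree obtained from $T^{\mathrm{def}}$ by twisting the action through $\phi$ is again a Grushko $(G,\{[G_1],[G_2]\})$-tree, and in this sporadic case the relative outer space $P\mathcal{O}(G,\{[G_1],[G_2]\})$ is reduced to a point, so the twisted tree is $G$-equivariantly isomorphic to $T^{\mathrm{def}}$; the inverse of that isomorphism is your $\phi$-equivariant $f$. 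With this said, your identification $f(v_i)=g_iv_i$ (using that $G_i$ is self-normalizing, hence the vertex with stabilizer $g_iG_ig_i^{-1}$ is unique) and the edge argument are clean, and the centralizer computation $C_G(G_i)=Z(G_i)$ for the kernel is the standard normal-form fact you cite.
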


\begin{prop}\label{sporadic-2}
Let $G_1$ be a countable group. Then $\text{Out}(G_1\ast,\{[G_1]\}^{(t)})$ has a subgroup of index $2$ that is isomorphic to $(G_1\times G_1)/Z(G_1)$, where $Z(G_1)$ sits as a subgroup of $G_1\times G_1$ via the diagonal inclusion map.
\end{prop}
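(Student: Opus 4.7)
The plan is to compute $\text{Out}(G,\{[G_1]\}^{(t)})$ directly from the Bass--Serre tree of the splitting $G = G_1 \ast \langle t \rangle$; the result also follows from Levitt's general analysis of automorphisms of graphs of groups \cite{Lev04}. Let $T$ be that Bass--Serre tree, whose vertex stabilizers are the conjugates of $G_1$ and whose edge stabilizers are trivial. Given $\Phi \in \text{Out}(G,\{[G_1]\}^{(t)})$, pick a representative $\phi$. After composing with an inner automorphism we may arrange $\phi(G_1) = G_1$; by hypothesis $\phi|_{G_1}$ is conjugation by some $u \in G$, and $\phi(G_1) = G_1$ forces $u \in N_G(G_1)$. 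A short normal-form argument shows $N_G(G_1) = G_1$, so a further inner adjustment yields a representative with $\phi|_{G_1} = \text{id}$.

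The induced isometry $\bar\phi$ of $T$ then fixes the vertex $v_0$ stabilized by $G_1$. The edges incident to $v_0$ fall into two $G_1$-orbits, corresponding to the two orientations of the loop in the quotient graph, and $\bar\phi$ either preserves or swaps these orbits. This parity defines a homomorphism $\epsilon:\text{Out}(G,\{[G_1]\}^{(t)})\to\mathbb{Z}/2\mathbb{Z}$; it is surjective because the automorphism sending $t \mapsto t^{-1}$ and fixing $G_1$ pointwise represents a nontrivial outer class reversing orientation. Let $\text{Out}^+$ denote its kernel, an index-two subgroup. After the normalization above, $\Phi \in \text{Out}^+$ forces $\phi(t) = g t h$ for some $g, h \in G_1$ (read off from $\bar\phi$'s action on the preferred edge orbit at $v_0$); conversely, any such assignment extends to a unique automorphism of $G$ by the universal property of free products.

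Set $\tau_{g,h}$ to be the automorphism with $\tau_{g,h}|_{G_1} = \text{id}$ and $\tau_{g,h}(t) = g^{-1} t h$. A direct computation gives $\tau_{g,h} \circ \tau_{g',h'} = \tau_{gg',\,hh'}$, so $\rho:(g,h) \mapsto [\tau_{g,h}]$ is a homomorphism $G_1 \times G_1 \to \text{Out}^+$, and is surjective by the classification above. An inner automorphism $\text{ad}(u)$ restricts to the identity on $G_1$ iff $u \in C_G(G_1)$, and a further normal-form computation identifies $C_G(G_1) = Z(G_1)$. Comparing $\text{ad}(u)(t) = u t u^{-1}$ with $\tau_{g,h}(t) = g^{-1} t h$ forces $g = u^{-1}$ and $h = u^{-1}$, so $\ker\rho = \{(u^{-1}, u^{-1}) : u \in Z(G_1)\}$; after the substitution $v = u^{-1}$, this is exactly the diagonal image of $Z(G_1)$ in $G_1 \times G_1$. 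Therefore $\text{Out}^+ \cong (G_1 \times G_1)/Z(G_1)$, as required. The only step requiring any real work is the classification $\phi(t) = g\, t^{\pm 1}\, h$ arising from the Bass--Serre analysis; everything else is formal group-theoretic computation.
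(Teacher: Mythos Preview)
Your argument is correct and essentially carries out, in this special case, the computation that the paper defers entirely to Levitt \cite{Lev04}: the paper gives no proof of its own, simply stating that Propositions \ref{sporadic-1} and \ref{sporadic-2} are particular cases of Levitt's description of automorphisms of graphs of groups. Your Bass--Serre analysis (normalize $\phi|_{G_1}=\mathrm{id}$, read off $\phi(t)=g\,t^{\pm 1}h$ from the action on the edge orbits at $v_0$, then identify the kernel of $(g,h)\mapsto[\tau_{g,h}]$ with the diagonal $Z(G_1)$ via $C_G(G_1)=Z(G_1)$) is exactly the kind of direct computation Levitt's framework encodes.

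One small point worth tightening: you should note that the parity map $\epsilon$ is well-defined on outer classes, not just on the chosen normalized representative. This is immediate once you observe that $T$ is the unique point of $P\mathcal{O}(G,\{[G_1]\})$, so $\mathrm{Out}(G,\{[G_1]\}^{(t)})$ acts on the quotient graph of groups (a single loop), and $\epsilon$ is just the induced action on the orientation of that loop. With that remark, the proof is complete.
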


\section{Stabilizers of trees in $\overline{\mathcal{O}(G,\mathcal{F})}$}\label{sec-3}

Let $G$ be a countable group, and let $\mathcal{F}$ be a free factor system of $G$. Given $T\in\overline{\mathcal{O}(G,\mathcal{F})}$ (resp. $[T]\in\overline{P\mathcal{O}(G,\mathcal{F})}$), we denote by $\text{Out}(T)$ (resp. $\text{Out}([T])$) the subgroup of $\text{Out}(G,\mathcal{F}^{(t)})$ consisting of those automorphisms that fix $T$ (resp. $[T]$). Notice that $\text{Out}(T)$ sits inside $\text{Out}([T])$ as a normal subgroup. There is a natural morphism $$\lambda:\text{Out}([T])\to\mathbb{R}_+^{\ast},$$ where $\lambda(\Phi)$ is defined as the unique real number such that $T.\Phi=\lambda(\Phi)T$. The kernel of $\lambda$ is equal to $\text{Out}(T)$, so $\text{Out}([T])$ is an abelian extension of $\text{Out}(T)$. One can actually show that the image of $\lambda$ is a cyclic subgroup of $\mathbb{R}_+^{\ast}$ \cite{GL14-2}. 
\\
\\
\indent In \cite[Corollary 3.5]{Hor14-5}, we proved the following about point stabilizers of trees in $\overline{\mathcal{O}(G,\mathcal{F})}$.

\begin{prop}(Horbez \cite[Corollary 3.5]{Hor14-5})\label{per-finite}
Let $T\in\overline{\mathcal{O}(G,\mathcal{F})}$ be a tree with trivial arc stabilizers. Then there are finitely many orbits of points in $T$ with nontrivial stabilizer. For all $v\in T$, we have $\text{rk}_K(\text{Stab}(v))<\text{rk}_K(G,\mathcal{F})$.
\end{prop}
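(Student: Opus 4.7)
The plan is to put $T$ in the normal form provided by Levitt's decomposition (Proposition~\ref{Levitt}), reducing the statement to a combination of simplicial Bass--Serre data and point stabilizers inside dense-orbit vertex trees, then to bound each piece separately.

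Concretely, I would write $T$ as the tree of a $(G,\mathcal{F})$-graph of actions $\mathcal{G}$ whose vertex trees $T_v$ either have dense orbits under their stabilizers $G_v$ or are reduced to points, and whose edges have positive length. Since $T$ has trivial arc stabilizers, every edge of $\mathcal{G}$ carries the trivial group, so the underlying Bass--Serre tree $S$ is a $(G,\mathcal{F})$-free splitting. Its vertex groups are $(G,\mathcal{F})$-free factors; iterating~\eqref{rkK} over the edges of $\mathcal{G}$ yields $\text{rk}_K(G_v) \le \text{rk}_K(G,\mathcal{F})$, with strict inequality as soon as $\mathcal{G}$ has at least one edge.

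For the finiteness claim, I would observe that a point $v$ in a dense-orbit vertex tree $T_v$ can have nontrivial stabilizer only if $v$ is a branch point of $T_v$: otherwise $v$ has exactly two directions, and any nontrivial element of $\text{Stab}(v)$ either preserves or swaps them, in both cases producing (after squaring if necessary) a nontrivial element fixing an arc, contradicting the assumption on $T$. The finiteness of $G$-orbits of directions at branch points of $T$ (\cite[Corollary~4.8]{Hor14-5}, already invoked in the proof of Lemma~\ref{nonsimplicial}) then gives finitely many $G_v$-orbits of branch points in each $T_v$. Combined with the finite number of $G$-orbits of vertices of the skeleton of $\mathcal{G}$, this yields the first assertion.

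For the Kurosh rank bound, two cases arise. If $\text{Stab}(v)$ is conjugate into some $G_v$ and comes from an endpoint of a simplicial edge of $\mathcal{G}$, then the inequality from the previous paragraph suffices, provided $\mathcal{G}$ is nontrivial; if $\mathcal{G}$ is trivial, then $T$ itself has dense orbits and this case is empty. If instead $v$ lies in the interior of a dense-orbit vertex tree $T_v$, one has $\text{Stab}(v) \subseteq G_v$ and it remains to prove the strict inequality $\text{rk}_K(\text{Stab}(v)) < \text{rk}_K(G_v,\mathcal{F}_{G_v})$. I would establish this by a Gaboriau--Levitt type approximation argument: choose a sequence of Grushko $(G_v,\mathcal{F}_{G_v})$-trees converging to $T_v$ in the axes topology, then use semicontinuity of point-stabilizer ranks along the sequence together with the fact that every elliptic subgroup of a Grushko $(G_v,\mathcal{F}_{G_v})$-tree is peripheral, to bound $\text{rk}_K(\text{Stab}(v))$ strictly below $\text{rk}_K(G_v,\mathcal{F}_{G_v})$. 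The main obstacle is precisely this last step: one needs the free-product analogue of the classical Gaboriau--Levitt--Lustig index formula for $\mathbb{R}$-trees, and careful control of how point stabilizers behave under limits in $\overline{\mathcal{O}(G_v,\mathcal{F}_{G_v})}$. Once that is in place, everything else is a straightforward consequence of Bass--Serre theory combined with the additivity formulas \eqref{J}--\eqref{rkK} and the finiteness of branch-point orbits.
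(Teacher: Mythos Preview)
The paper does not prove this proposition at all: it is stated as a citation of \cite[Corollary~3.5]{Hor14-5}, with no argument given here. So there is no proof in the present paper to compare your proposal against.

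As for your sketch on its own merits: the overall architecture (Levitt decomposition, reduction to dense-orbit pieces, then an index-type inequality) is indeed how the cited result is obtained in \cite{Hor14-5}, but you have correctly identified that the substantive content lies entirely in the last step, the free-product analogue of the Gaboriau--Levitt index inequality. Your proposal does not supply that argument; the phrase ``once that is in place'' is doing all the work. Everything before it is bookkeeping. So this is an outline that locates the difficulty rather than a proof.

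One small point in your finiteness argument: the claim that a point with nontrivial stabilizer in a dense-orbit tree must be a branch point is not quite right as stated. An order-two element could swap the two directions at a valence-two point without fixing any arc. The correct statement (and the one actually used in \cite{Hor14-5}) controls orbits of branch points \emph{and} of inversion points simultaneously via the bound on orbits of directions; you should phrase it that way rather than trying to exclude non-branch points outright.
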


Let $T\in\overline{\mathcal{O}(G,\mathcal{F})}$ be a tree with trivial arc stabilizers. Let $V$ be the collection of $G$-orbits of points with nontrivial stabilizer in $T$. Let $\{G_v\}_{v\in V}$ be a set of representatives of the $G$-conjugacy classes of point stabilizers in $T$. We define $\text{Out}(T,\{[G_v]\}_{v\in V}^{(t)})$ to be the subgroup of $\text{Out}(T)$ made of those automorphisms that are a conjugation by an element of $G$ in restriction to every point stabilizer of $T$.

\begin{theo} (Guirardel--Levitt \cite{GL14-2})\label{Guirardel-Levitt}
Let $T\in\overline{\mathcal{O}(G,\mathcal{F})}$ be a tree with trivial arc stabilizers. Let $V$ be the collection of orbits of points in $T$ with nontrivial stabilizer, and let $\{G_v\}_{v\in V}$ be the collection of point stabilizers in $T$. Then $\text{Out}(T,\{[G_v]\}^{(t)})$ has a finite index subgroup $\text{Out}^0(T,\{[G_v]\}^{(t)})$ which admits an injective morphism $$\text{Out}^0(T,\{[G_v]\}^{(t)})\hookrightarrow\prod_{v\in V}G_v^{d_v}/Z(G_v),$$ where $d_v$ denotes the degree of $v$ in $T$, and $Z(G_v)$ denotes the center of $G_v$, and $Z(G_v)$ sits as a diagonal subgroup of $G_v^{d_v}$ via the diagonal inclusion map. 
\end{theo}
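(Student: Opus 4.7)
The plan is to exploit the Levitt decomposition of $T$ (Proposition \ref{Levitt}). Since arc stabilizers of $T$ are trivial, this realizes $T$ as a $(G,\mathcal{F})$-graph of actions $\mathcal{G}$ with trivial edge groups, and vertex trees that are either points or have dense orbits under the action of their stabilizer. By Proposition \ref{per-finite}, the set $V$ is finite and each $G_v$ has Kurosh rank strictly smaller than $\text{rk}_K(G,\mathcal{F})$. The canonicity of the Levitt decomposition forces any $\Phi \in \text{Out}(T)$ to permute the finitely many orbits of cells of $\mathcal{G}$ together with their vertex-action decorations, so the subgroup $\text{Out}^0(T,\{[G_v]\}^{(t)})$ of elements that fix each such $G$-conjugacy class individually has finite index in $\text{Out}(T,\{[G_v]\}^{(t)})$.

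Next I would construct the injective morphism by extracting ``twist parameters''. Given $\Phi \in \text{Out}^0$, postcomposition by an inner automorphism of $G$ arranges that $\Phi$ fixes a chosen basepoint in each orbit of $V$ and acts as an inner automorphism of the corresponding $G_v$. For each of the $d_v$ directions $\delta$ at $v$, the restriction of $\Phi$ to the half-subtree beyond $\delta$ is implemented by conjugation by some $g_{v,\delta} \in G_v$; triviality of the edge stabilizer separating $v$ from that half-subtree allows the $g_{v,\delta}$ to be specified independently across directions. Two tuples differing by a diagonal factor $z \in Z(G_v)$ yield the same outer automorphism, since conjugating the entire tree by the central element $z$ is realized by the inner automorphism of $G$ by $z$. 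This produces a candidate morphism
$$\text{Out}^0(T,\{[G_v]\}^{(t)}) \to \prod_{v \in V} G_v^{d_v}/Z(G_v).$$
For injectivity, if all twist tuples of $\Phi$ are trivial modulo centers, then after an inner adjustment $\Phi$ acts as the identity on each $G_v$ and on every half-subtree beyond every direction at every $v \in V$; triviality of arc stabilizers then forces $\Phi$ to act as the identity on $T$, so $\Phi$ is inner.

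The main obstacle is the presence of vertex actions with dense orbits in the Levitt decomposition: at such vertices the directions in $T$ do not correspond to edges of an underlying graph of groups in a naive way, so one must interpret the twist construction relative to the simplicial skeleton of a finer transverse covering (for instance, the skeleton provided by compatibility of $T$ with an appropriate $\mathcal{Z}$-splitting), and one must control how individual twists propagate through dense-orbit factors. Levitt's twist calculus for graphs of groups \cite{Lev04} handles arbitrary valences in the purely simplicial setting, and the technical heart of Guirardel and Levitt's argument in \cite{GL14-2} is the extension of this calculus to the graph-of-actions setting, using both the canonicity of the Levitt decomposition and the Kurosh-rank bound of Proposition \ref{per-finite} to reduce the analysis at each vertex to a manageable piece.
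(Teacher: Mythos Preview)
The paper does not prove this theorem: it is quoted from Guirardel--Levitt \cite{GL14-2} and immediately used as a black box (via Corollary~\ref{Tits-stabilizer}) in the proof of Theorem~\ref{Tits-relative}. There is therefore no argument in the present paper against which to compare your proposal.

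That said, your sketch captures the correct overall strategy---assigning to an automorphism a tuple of twist parameters in the style of Levitt \cite{Lev04}, one per orbit of direction at each nontrivially-stabilized point, well defined up to a diagonal central element---and you correctly identify the main technical difficulty, namely carrying out this twist calculus across the dense-orbits vertex actions in the Levitt decomposition of $T$. One imprecision worth flagging: your description of $\text{Out}^0$ as the subgroup ``fixing each such $G$-conjugacy class individually'' is redundant, since every element of $\text{Out}(T,\{[G_v]\}^{(t)})$ already preserves each class $[G_v]$ (an automorphism that restricts to a $G$-inner automorphism on $G_v$ sends $G_v$ to a $G$-conjugate of itself). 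The genuine reason one must pass to a finite-index subgroup is to fix the finitely many $G_v$-orbits of directions at each $v$ individually, so that the coordinates of the twist tuple in $G_v^{d_v}$ are unambiguous; this is also how $d_v$ should be read (as the number of $G_v$-orbits of directions at $v$, i.e.\ the degree in the quotient), not as the raw number of directions in $T$, which is typically infinite when $G_v$ is.
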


A consequence of Guirardel and Levitt's theorem is the following fact.

\begin{cor}\label{Tits-stabilizer}
Let $T\in\overline{\mathcal{O}(G,\mathcal{F})}$ be a tree with trivial arc stabilizers. Let $V$ be the collection of orbits of points in $T$ with nontrivial stabilizer, and let $\{G_v\}_{v\in V}$ be the collection of point stabilizers in $T$. If $G$ satisfies the Tits alternative, then $\text{Out}(T,\{[G_v]\}^{(t)})$ satisfies the Tits alternative.
\end{cor}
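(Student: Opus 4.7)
The plan is to exploit the explicit algebraic structure provided by Theorem \ref{Guirardel-Levitt}: the stabilizer $\text{Out}(T,\{[G_v]\}^{(t)})$ admits a finite-index subgroup $\text{Out}^0(T,\{[G_v]\}^{(t)})$ that embeds into the finite direct product $\prod_{v\in V} G_v^{d_v}/Z(G_v)$. Here $V$ is finite by Proposition \ref{per-finite}. Since the Tits alternative is stable under passing to subgroups and to finite-index supergroups, it will suffice to prove that this finite product satisfies the Tits alternative.

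I would first handle each factor $G_v^{d_v}/Z(G_v)$ individually. Each $G_v$ is a subgroup of $G$ and hence satisfies the Tits alternative by hypothesis, and since the Tits alternative is stable under extensions, the finite direct product $G_v^{d_v}$ (viewed as an iterated extension of copies of $G_v$) also satisfies it. The crux of the argument is then to verify that the Tits alternative is preserved when quotienting by the diagonally embedded abelian subgroup $Z(G_v) \triangleleft G_v^{d_v}$. More generally, I would establish the following stability property: \emph{if a group $K$ satisfies the Tits alternative and $N \triangleleft K$ is abelian, then so does $K/N$.} Given a subgroup $H \leq K/N$, let $\tilde H \leq K$ denote its preimage. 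Either $\tilde H$ is virtually solvable, in which case its quotient $H$ is virtually solvable as well, or $\tilde H$ contains a copy of $F_2$. In the latter case, the kernel of the restriction $F_2 \to H$ equals $F_2 \cap N$, which is normal in $F_2$ (because $N$ is normal in $\tilde H$) and abelian; since $F_2$ has no nontrivial normal abelian subgroup, this kernel is trivial and $H$ contains $F_2$.

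Combining these ingredients, each $G_v^{d_v}/Z(G_v)$ satisfies the Tits alternative; another application of stability under extensions shows that the finite product $\prod_{v\in V} G_v^{d_v}/Z(G_v)$ does too; the subgroup $\text{Out}^0(T,\{[G_v]\}^{(t)})$ inherits the Tits alternative from this product, and finally $\text{Out}(T,\{[G_v]\}^{(t)})$ inherits it as a finite-index supergroup.

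The main obstacle is the quotient-by-an-abelian-normal-subgroup step, which is not among the stability properties listed in the introduction. It reduces to the classical fact that normal abelian subgroups of $F_2$ are trivial: as subgroups of free groups are free, any abelian such subgroup would be cyclic, say $\langle a\rangle$, and would yield a homomorphism $F_2 \to \operatorname{Aut}(\langle a\rangle) = \{\pm 1\}$ whose index-$2$ kernel centralizes $a$, contradicting the fact that centralizers of nontrivial elements in free groups are cyclic. Every other step is a direct appeal to the stability properties of the Tits alternative recalled in the introduction.
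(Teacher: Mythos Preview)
Your proposal is correct and is exactly the natural elaboration of what the paper intends: the paper gives no explicit proof, merely stating the corollary as ``a consequence of Guirardel and Levitt's theorem'' (Theorem \ref{Guirardel-Levitt}). You have filled in the details in the expected way, using finiteness of $V$ (Proposition \ref{per-finite}) together with the stability of the Tits alternative under subgroups, extensions, and finite-index supergroups.

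The one step you flag as the ``main obstacle'' --- stability of the Tits alternative under quotienting by an abelian normal subgroup --- is indeed not listed among the paper's stated stability properties, and your argument via the triviality of normal abelian subgroups of $F_2$ is clean and correct. This is the only point requiring any thought beyond the structural theorem, and you handle it properly.
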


\section{Arational $(G,\mathcal{F})$-trees}\label{sec-4}

Let $G$ be a countable group, and let $\mathcal{F}:=\{[G_1],\dots,[G_k]\}$ be a free factor system of $G$. We recall that a $(G,\mathcal{F})$-free factor is \emph{proper} if it is nonperipheral (in particular nontrivial), and not equal to $G$.

\begin{de}
A $(G,\mathcal{F})$-tree $T\in\overline{\mathcal{O}(G,\mathcal{F})}$ is \emph{arational} if $T\in\partial \mathcal{O}(G,\mathcal{F})$ and for every proper $(G,\mathcal{F})$-free factor $H\subset G$, the factor $H$ is not elliptic in $T$, and the $H$-minimal subtree $T_H$ of $T$ is a Grushko $(H,\mathcal{F}_{H})$-tree, i.e. the action of $H$ on $T_H$ is simplicial and relatively free.
\end{de}

We denote by $\mathcal{AT}(G,\mathcal{F})$ the subspace of $\overline{\mathcal{O}(G,\mathcal{F})}$ consisting of arational $(G,\mathcal{F})$-trees. 

\subsection{Arational surface $(G,\mathcal{F})$-trees}

We describe a way of constructing arational $(G,\mathcal{F})$-trees, illustrated in Figure \ref{fig-arational}. We first need the following fact. 

\begin{prop}\label{finite-index}
Let $T$ be a tree dual to an arational measured foliation on a compact $2$-orbifold $\mathcal{O}$ with conical singularities, and let $H\subseteq\pi_1(\mathcal{O})$ be a finitely generated subgroup of $\pi_1(\mathcal{O})$ of infinite index. Then the $H$-minimal subtree of $T$ is simplicial.
\end{prop}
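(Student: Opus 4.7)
My plan is to pass to the covering space of $\mathcal{O}$ associated to $H$ and analyze the pulled-back foliation there. Let $p\colon \mathcal{O}_H\to \mathcal{O}$ denote the covering with $\pi_1(\mathcal{O}_H)=H$, and let $\tilde F := p^{-1}(F)$ be the lifted measured foliation. Since $H$ is finitely generated of infinite index, the orbifold $\mathcal{O}_H$ has finite topological type but is non-compact: this follows from an orbifold version of Scott's compact core theorem, obtained after passing to a torsion-free finite-index subgroup of $\pi_1(\mathcal{O})$ via Selberg's lemma. Under the standard identification of $T$ with the metric completion of the leaf space of the lift of $F$ to the universal cover of $\mathcal{O}$, the $H$-minimal subtree $T_H$ corresponds precisely to the dual tree of $\tilde F$, with $H$-action induced by deck transformations.

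The core of the argument is to show that $\tilde F$ admits no minimal component. Were such a component to exist, it would be supported on a connected compact sub-orbifold $\mathcal{O}' \subseteq \mathcal{O}_H$ on which $\tilde F$ restricts to an arational sub-foliation $\tilde F'$. Two cases arise. If $p(\mathcal{O}') \subsetneq \mathcal{O}$, then the $\pi_1(\mathcal{O})$-translates of a lift of $\mathcal{O}'$ in the universal cover descend to a proper $\pi_1(\mathcal{O})$-invariant sub-orbifold of $\mathcal{O}$ supporting a nontrivial sub-foliation of $F$, contradicting the arationality of $F$. If $p(\mathcal{O}') = \mathcal{O}$, then since $\mathcal{O}'$ is compact and the covering $p$ has infinite degree, a standard compactness argument shows that $\mathcal{O}'$ can only meet finitely many $H$-translates of a fundamental domain for $p$, forcing $[\pi_1(\mathcal{O}) : H] < \infty$, contradicting our hypothesis. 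Hence $\tilde F$ has no minimal components, so each leaf is either a compact arc with endpoints on $\partial \mathcal{O}_H$ or a properly embedded line escaping to an end of $\mathcal{O}_H$. The dual tree of such a foliation on a finite-type orbifold is simplicial, yielding that $T_H$ is simplicial.

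The main obstacle will be making the correspondence between $T_H$ and the dual tree of $\tilde F$ fully rigorous in the presence of conical singularities, and carrying out the descent of sub-foliations through the covering $p$ without loss of measure-theoretic information. I would likely reduce to the case of an honest surface by first passing to a torsion-free normal finite-index subgroup $\Gamma \trianglelefteq \pi_1(\mathcal{O})$, applying the classical results on measured foliations on surfaces to the surface cover $\mathcal{O}_\Gamma \to \mathcal{O}$ and to the intermediate cover $\mathcal{O}_H \to \mathcal{O}_{\Gamma\cap H}$, and then transferring back to $\mathcal{O}$ by equivariance under the finite group $\pi_1(\mathcal{O})/\Gamma$. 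As a sanity check on the final simplicial structure, I would verify consistency with the Levitt decomposition of $T_H$ provided by Proposition \ref{Levitt}, whose vertex trees with dense orbits are exactly what the argument rules out.
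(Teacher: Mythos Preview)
The paper does not give a detailed proof: it cites Reynolds \cite{Rey11-2} for the surface case and observes that the orbifold case follows either by direct adaptation or via Selberg's lemma, passing to a finite-index surface subgroup. Your final paragraph, reducing to surfaces through a torsion-free normal finite-index subgroup $\Gamma$, is precisely this second route, so at that level your strategy and the paper's coincide.

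Your direct two-case analysis, however, has gaps as written. In Case~1, the phrase ``$\pi_1(\mathcal{O})$-invariant sub-orbifold of $\mathcal{O}$'' is vacuous (there is no nontrivial $\pi_1(\mathcal{O})$-action on $\mathcal{O}$), and $p(\mathcal{O}')$ need not a priori be a sub-orbifold. The correct observation is that $\partial\mathcal{O}'$ consists of closed leaves of $\tilde F$, which project to closed leaves of $F$; arationality forces these into $\partial\mathcal{O}$, so $p(\mathcal{O}')$ is open and closed in $\mathcal{O}$ and hence equals $\mathcal{O}$ --- Case~1 is in fact empty. In Case~2, ``$\mathcal{O}'$ meets only finitely many fundamental domains'' is just a restatement of compactness and does not by itself yield finite index of $H$; what is needed is that $p|_{\mathcal{O}'}:\mathcal{O}'\to\mathcal{O}$, being a proper local homeomorphism with $\partial\mathcal{O}'$ mapping to $\partial\mathcal{O}$, is a finite covering, so that the image of $\pi_1(\mathcal{O}')$ is a finite-index subgroup of $\pi_1(\mathcal{O})$ contained in $H$. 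With these repairs the direct argument goes through; since you already plan to fall back on the Selberg reduction to the known surface result, the cleanest path is exactly the one the paper indicates.
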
  

A proof of Proposition \ref{finite-index} appears in \cite{Rey11-2} in the case where $\mathcal{O}$ is a compact surface, and it adapts to the case where $\mathcal{O}$ is a $2$-orbifold. Proposition \ref{finite-index} can also be deduced from the surface case by using Selberg's Lemma, which states that $\pi_1(\mathcal{O})$ has a finite-index subgroup which is the fundamental group of a compact surface.
\\
\\
\noindent Let $\mathcal{O}$ be a compact $2$-orbifold of genus $g$ with conical singularities, having $s+1$ boundary curves $b_0,b_1,\dots,b_s$, and $q$ conical points $b_{s+1},\dots,b_{s+q}$, equipped with an arational measured foliation. We build a graph of groups $\mathcal{G}'$ in the following way. One of the vertex groups of $\mathcal{G}'$ is the fundamental group of the orbifold $\mathcal{O}$, and the others are the peripheral subgroups $G_i$. For all $i\in\{1,\dots,s+q\}$, we choose $j_i\in\{1,\dots,k\}$, and an element $g_i\in G_{j_i}$, of same order as $b_i$. We put an edge between the vertex of $\mathcal{G}'$ associated to $\mathcal{O}$ and the vertex associated to $G_{j_i}$, and we amalgamate $b_i$ with $g_i$. Choices are made in such a way that the graph $\mathcal{G}'$ we get is connected. We then define a graph of groups $\mathcal{G}$ as the minimal subgraph of groups of $\mathcal{G}'$, i.e. $\mathcal{G}$ is obtained from $\mathcal{G}'$ by removing vertices $G_j$ with exactly one incident edge, and such that $G_i$ is cyclic and generated by $b_i$. Notice that the element of $\pi_1(\mathcal{O})$ corresponding to the boundary curve $b_0$ does not fix any edge in $\mathcal{G}$. The fundamental group of $\mathcal{G}$ is isomorphic to $G:=G_1\ast\dots\ast G_k\ast F_N$, where $N=2g+b_1(\mathcal{G})$ if $\mathcal{O}$ is orientable, and $N=g+b_1(\mathcal{G})$ if $\mathcal{O}$ is nonorientable. 

\begin{figure}
\begin{center}
\input{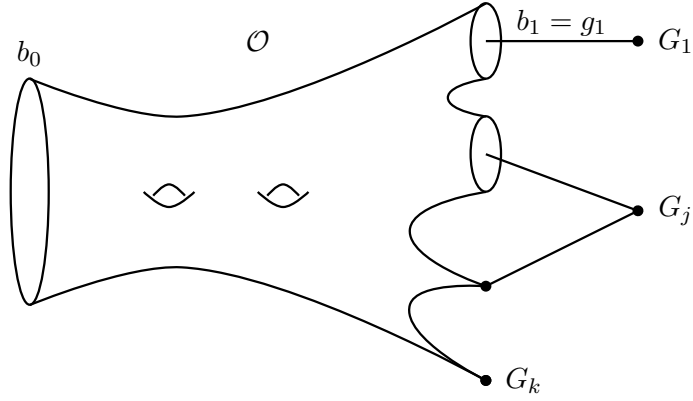}
\caption{An arational surface $(G,\mathcal{F})$-tree.}
\label{fig-arational}
\end{center}
\end{figure}

Dual to the foliation on $\mathcal{O}$ is a $\pi_1(\mathcal{O})$-tree $Y$. We form a graph of actions over $\mathcal{G}$: vertex trees are the $\pi_1(\mathcal{O})$-tree $Y$, and a trivial $G_i$-tree for all $i\in\{1,\dots,k\}$, attaching points in $Y$ are the points fixed by the $b_i$'s, and edges have length $0$. We denote by $T$ the $(G,\mathcal{F})$-tree defined in this way.

\begin{de}
A $(G,\mathcal{F})$-tree obtained by the above construction is called an \emph{arational surface} $(G,\mathcal{F})$-tree.
\end{de}

We claim that the $(G,\mathcal{F})$-tree $T$ we have built is an arational $(G,\mathcal{F})$-tree, which justifies our terminology. 

We start by making the following remarks: all point stabilizers in $Y$ are peripheral, except $b_0$. The element $b_0$ is not contained in any proper $(G,\mathcal{F})$-free factor. Indeed, otherwise, there would exist a $(G,\mathcal{F})$-free splitting $S$ in which $b_0$ is elliptic, and all other boundary components of $\mathcal{O}$ would also be elliptic in $S$ because they are peripheral. The splitting $S$ would then restrict to a free splitting of $\pi_1(\mathcal{O})$ in which all boundary components are elliptic. Such a splitting does not exist, so we have reached a contradiction. 

Let now $H$ be a proper $(G,\mathcal{F})$-free factor. Assume towards a contradiction that the $H$-minimal subtree of $T$ is not a Grushko $(H,\mathcal{F}_H)$-tree. The action of $H$ on $T$ is relatively free because $b_0$ is not contained in any proper $(G,\mathcal{F})$-free factor, so the action of $H$ is not discrete. The transverse covering of $T$ made of the translates of the $\pi_1(\mathcal{O})$-minimal subtree of $T$ induces a transverse covering of the $H$-minimal subtree of $T$, whose nontrivial elements are $H\cap\pi_1(\mathcal{O})^g$-trees, for some $g\in G$. Therefore, there exists a conjugate $H^g$ of $H$ so that $H^g\cap\pi_1(\mathcal{O})\neq\{e\}$, and the action of $H^g\cap\pi_1(\mathcal{O})$ on its minimal subtree is non-simplicial. By Proposition \ref{finite-index}, this implies that $H^g\cap\pi_1(\mathcal{O})$ has finite index in $\pi_1(\mathcal{O})$. As $H$ is elliptic in a $(G,\mathcal{F})$-free splitting $S$, so is $\pi_1(\mathcal{O})$: the group $\pi_1(\mathcal{O})$ fixes a unique point in $S$. All other vertex stabilizers of the Bass--Serre tree $S_0$ of $\mathcal{G}$ are peripheral, so each of them fixes a unique point in $S$. Since edge stabilizers of $S_0$ are peripheral, the stabilizers of any two adjacent vertices in $S_0$ contain a common peripheral element. This implies that they have the same fixed point in $S$, because no peripheral element fixes an arc in $S$. Therefore, all vertex groups of $S_0$ fix the same point in $S$. Hence $G$ is elliptic in $S$, a contradiction.

\subsection{A classification result}

The goal of this section is to provide a classification result for trees in $\overline{\mathcal{O}(G,\mathcal{F})}$. When $T\in\overline{\mathcal{O}(G,\mathcal{F})}$ is not arational, a proper $(G,\mathcal{F})$-free factor is a \emph{dynamical free factor} for $T$ if it acts with dense orbits on its minimal subtree but does not fix any point in $T$. The following proposition is an extension of \cite[Proposition 2.1]{Hor14-4} to the context of $(G,\mathcal{F})$-trees.

\begin{prop}\label{classification}
Let $G$ be a countable group, and let $\mathcal{F}$ be a free factor system of $G$. Then for all $(G,\mathcal{F})$-trees $T\in\overline{\mathcal{O}(G,\mathcal{F})}$, either
\begin{itemize}
\item we have $T\in\mathcal{O}(G,\mathcal{F})$, or
\item the tree $T$ is arational, or
\item the tree $T$ has a dynamical free factor, or
\item the tree $T$ has no dynamical free factor, and there exists $x\in T$ whose stabilizer is nonperipheral, and is contained in a proper $(G,\mathcal{F})$-free factor.
\end{itemize}
\end{prop}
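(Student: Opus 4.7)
The plan is to apply the Levitt decomposition of $T$ (Proposition~\ref{Levitt}) and analyze separately the dense-orbit parts and the simplicial parts, using Propositions~\ref{surface-type} and~\ref{outer-limits} to handle the dense-orbit parts.

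If $T\in\mathcal{O}(G,\mathcal{F})$ we are done, so suppose $T\in\partial\mathcal{O}(G,\mathcal{F})$. Assume moreover that $T$ has no dynamical free factor; I aim to show that $T$ is arational or that there exists $x\in T$ whose stabilizer is nonperipheral and contained in a proper $(G,\mathcal{F})$-free factor. Write the Levitt decomposition of $T$: its vertex trees $Y_v$ are either points or have dense $G_v$-orbits, and the edges of its skeleton have trivial or maximally-cyclic nonperipheral stabilizers. The first observation is that for every nondegenerate $Y_v$, the stabilizer $G_v$ does not fix any point of $T$, so if $G_v$ were a proper $(G,\mathcal{F})$-free factor it would be a dynamical free factor, contrary to the assumption. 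Using Lemma~\ref{stab-cyclic} to collapse the cyclic edges in the skeleton, one promotes $G_v$ to a vertex group $\widehat{G}_v$ of a $(G,\mathcal{F})$-free splitting; the assumption excluding dynamical free factors then forces $\widehat{G}_v=G$ for every such~$v$.

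Now I distinguish two cases. If $T$ is simplicial (no nondegenerate $Y_v$), then $T$ is a very small $(G,\mathcal{F})$-tree not in $\mathcal{O}(G,\mathcal{F})$, so it has either a nonperipheral point stabilizer or a nontrivial cyclic edge stabilizer. Collapsing the edges of $T$ with nontrivial cyclic stabilizer yields a $(G,\mathcal{F})$-free splitting whose vertex groups are $(G,\mathcal{F})$-free factors; a well-chosen point $x$ of $T$ projects to a proper vertex of this free splitting, exhibiting $\text{Stab}(x)$ as nonperipheral and contained in a proper $(G,\mathcal{F})$-free factor. Otherwise $T$ has a nondegenerate dense-orbit part with $\widehat{G}_v=G$, and I claim $T$ is then arational. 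If $T$ were not arational, some proper $(G,\mathcal{F})$-free factor $H$ would be either elliptic in $T$ (fixing some $x$ with $\text{Stab}(x)\supseteq H$ nonperipheral), or would have non-Grushko minimal subtree $T_H$. In the elliptic subcase, if $\text{Stab}(x)$ were not contained in a proper free factor, then by Proposition~\ref{surface-type} the tree $T$ would be of surface type, and Proposition~\ref{outer-limits} would yield either a $(G,\mathcal{F})$-free splitting producing a dynamical free factor (the orbifold vertex subgroup acting with dense orbits on its dual subtree), or an exceptional boundary curve that one likewise upgrades to a dynamical free factor; either way we contradict the absence of a dynamical free factor. The non-Grushko subcase reduces to the elliptic one by applying the Levitt decomposition to $T_H$ and transferring $(H,\mathcal{F}_H)$-free factors back to $(G,\mathcal{F})$-free factors.

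The main obstacle lies in the exceptional-boundary-curve case of Proposition~\ref{outer-limits}: one must verify that the orbifold fundamental group itself is a proper $(G,\mathcal{F})$-free factor in order to produce the contradiction. The plan is to combine Lemma~\ref{stab-cyclic} with a careful collapsing of the skeleton of the transverse covering to realize this orbifold group as a vertex group of some $(G,\mathcal{F})$-free splitting, and then to verify that it acts with dense orbits on the associated dual subtree, which yields the required dynamical free factor and closes the argument.
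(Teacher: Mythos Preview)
Your strategy has two genuine gaps; the paper avoids both by a cleaner case split.

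\textbf{The first paragraph is not justified.} You claim that excluding dynamical free factors forces $\widehat{G}_v = G$, but if $\widehat{G}_v$ is a proper free factor its minimal subtree in $T$ contains $Y_v$ \emph{together with} the positive-length cyclic edges you collapsed, so $\widehat{G}_v$ does not act with dense orbits on it and is not itself dynamical. To descend from $\widehat{G}_v$ to a dynamical factor via Lemma~\ref{dense} you would need $T$ to have trivial arc stabilizers, which you have not arranged. The paper instead splits on whether $T$ has a nontrivial arc stabilizer: if it does, an interior point of such an arc directly gives the fourth conclusion via Lemma~\ref{stab-cyclic} applied to the Levitt skeleton; if it does not, one first observes that $T$ cannot be relatively free (else Lemma~\ref{dense} would produce a dynamical factor from any witness to non-arationality) and then invokes Lemma~\ref{classif}. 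Once arcs with cyclic stabilizer are disposed of separately, your $\widehat{G}_v$ detour becomes unnecessary: the Levitt skeleton is then already a free splitting, and each nondegenerate $G_v$ is already a free factor.

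\textbf{The plan for the boundary-curve case is misguided.} In the first alternative of Proposition~\ref{outer-limits}, realizing the orbifold group $\pi_1(\Sigma)$ itself as a proper $(G,\mathcal{F})$-free factor cannot work in general: in the arational-surface configuration $\pi_1(\Sigma)$ is not contained in any proper free factor (its unused boundary curve is not), and this configuration must be \emph{identified} as arational rather than excluded. The paper's Lemma~\ref{classif} handles this differently: an arc on $\Sigma$ with both endpoints on the exceptional curve $c$ yields a $(G,\mathcal{F})$-free splitting in which the \emph{other} orbifold groups (when there are at least two orbits of them) become elliptic, and then Lemma~\ref{dense} gives a dynamical factor; a residual subcase analysis (single orbifold orbit, only nonperipheral point stabilizer conjugate to $c$) isolates the arational surface tree. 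Collapsing the skeleton cannot replace this surface-arc construction, since Lemma~\ref{stab-cyclic} only puts edge groups, not vertex groups, into proper free factors.
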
 

\begin{lemma}\label{dense}
Let $T$ be a $(G,\mathcal{F})$-tree with trivial arc stabilizers. Let $H\subseteq G$ be a nonperipheral subgroup of $G$ that is contained in a proper $(G,\mathcal{F})$-free factor. If $H$ fixes a point in $T$, then $T$ is not arational. If the $H$-minimal subtree of $T$ is not simplicial, then $T$ has a dynamical proper free factor.   
\end{lemma}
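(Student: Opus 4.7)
\textbf{For the first assertion}, I would argue by contradiction and suppose that $T$ is arational. Let $K$ be a proper $(G,\mathcal{F})$-free factor containing $H$. Arationality forces $K$ to be non-elliptic in $T$, with the $K$-minimal subtree $T_K$ being a Grushko $(K,\mathcal{F}_K)$-tree; in particular, the $K$-action on $T_K$ is relatively free, so point stabilizers are peripheral in $(K,\mathcal{F}_K)$. Since $H\subseteq K$ fixes a point $x\in T$, the closest-point projection $\pi_K(x)\in T_K$ is also $H$-fixed by $K$-equivariance, forcing $H$ to lie in a $(K,\mathcal{F}_K)$-peripheral subgroup. But inspecting the Kurosh decomposition $K=G'_{i_1}\ast\dots\ast G'_{i_r}\ast F'$ shows that every $(K,\mathcal{F}_K)$-peripheral subgroup is $G$-conjugate into some $G_i$, hence $(G,\mathcal{F})$-peripheral, contradicting our hypothesis on $H$.

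\textbf{For the second assertion}, let again $K$ be a proper $(G,\mathcal{F})$-free factor containing $H$. Since $K$ has finite Kurosh rank, the minimal subtree $T_K$ admits a Levitt decomposition as a $(K,\mathcal{F}_K)$-graph of actions with dense-orbit vertex trees and edges of trivial or maximally-cyclic stabilizer (Proposition~\ref{Levitt}). The crucial observation is that since $T$ has trivial arc stabilizers, so does $T_K$, so \emph{all} edge stabilizers in this decomposition are trivial. The skeleton of the decomposition is therefore a $(K,\mathcal{F}_K)$-free splitting, whose vertex stabilizers are $(K,\mathcal{F}_K)$-free factors, and hence $(G,\mathcal{F})$-free factors of $G$.

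If the Levitt decomposition is trivial, then $K$ itself acts with dense orbits on $T_K$, which is nondegenerate because $T_K\supseteq T_H$ is non-simplicial; thus $K$ does not fix a point in $T$ and is the required dynamical free factor. Otherwise, the transverse covering of $T_K$ by translates of the nontrivial vertex trees restricts to a transverse covering of $T_H$; as $T_H$ is non-simplicial while edges of the decomposition are simplicial, there must exist a nondegenerate dense-orbit vertex tree $Y$ such that $T_H\cap Y$ is nondegenerate. Its stabilizer $G_v:=\mathrm{Stab}_K(Y)$ acts with dense orbits on $Y$, so $Y$ is the $G_v$-minimal subtree of $T$, and $G_v$ does not fix a point in $T$. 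Moreover $G_v$ cannot equal $K$, for otherwise $K$-minimality of $T_K$ would yield $Y=T_K$ and force the Levitt decomposition to be trivial; combined with the fact that dense-orbit actions preclude $G_v$ from being peripheral (since peripheral subgroups are elliptic), this shows that $G_v$ is a proper $(G,\mathcal{F})$-free factor. Thus $G_v$ is a dynamical free factor for $T$.

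\textbf{The main obstacle} is ensuring that the candidate stabilizer $G_v$ emerging from the Levitt decomposition is a genuine free factor. Without further hypotheses the Levitt decomposition is only a $\mathcal{Z}$-splitting, whose vertex stabilizers need not be free factors. It is the triviality of arc stabilizers in $T$ that promotes this $\mathcal{Z}$-splitting to a free splitting, which is what lets us identify $G_v$ as a proper free factor and conclude.
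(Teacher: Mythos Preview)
Your proof is correct and follows essentially the same strategy as the paper: for both assertions you pass to a proper free factor $K\supseteq H$, and for the second you apply the Levitt decomposition of $T_K$, using that trivial arc stabilizers force the edge groups to be trivial so that vertex groups are $(G,\mathcal{F})$-free factors. Your treatment is somewhat more explicit than the paper's (the closest-point projection argument, the case split on whether the Levitt decomposition is trivial, and the justification that the candidate $G_v$ is nonperipheral), but note that the detour through $T_H\cap Y$ and the verification that $G_v\neq K$ are unnecessary: it suffices to observe that $T_K$ is non-simplicial (since $T_H\subseteq T_K$ is), so some vertex tree is nondegenerate, and any such vertex group is automatically contained in the proper factor $K\subsetneq G$.
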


\begin{proof}
Let $F$ be a proper $(G,\mathcal{F})$-free factor that contains $H$. If $H$ fixes a point in $T$, then the action of $F$ is not relatively free, which implies that $T$ is arational.  

By Proposition \ref{Levitt}, the $F$-minimal subtree $T_F$ of $T$ splits as a graph of actions $\mathcal{G}$ with trivial edge stabilizers, in which all vertex actions have dense orbits (they may be trivial). Vertex groups of $\mathcal{G}$ are $(G,\mathcal{F})$-free factors. If the $H$-minimal subtree of $T$ is non-simplicial, then $T_F$ is non-simplicial, so one of the vertex groups of $\mathcal{G}$ is a dynamical proper $(G,\mathcal{F})$-free factor of $T$. 
\end{proof}

\begin{lemma}\label{classif}
Let $T$ be a $(G,\mathcal{F})$-tree with trivial arc stabilizers. Assume that $T$ is not relatively free. Then either
\begin{itemize}
\item the tree $T$ is an arational surface tree (in particular, all elliptic subgroups in $T$ are either cyclic or peripheral), or
\item the tree $T$ has a dynamical proper free factor, or
\item there exists a nonperipheral point stabilizer in $T$ that is contained in a proper $(G,\mathcal{F})$-free factor, and all noncyclic, nonperipheral point stabilizers in $T$ are contained in proper $(G,\mathcal{F})$-free factors.
\end{itemize}
\end{lemma}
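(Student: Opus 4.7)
The plan is to organise the argument around the Levitt decomposition of $T$ given by Proposition \ref{Levitt}. The crucial preliminary observation is that since $T$ has trivial arc stabilizers, no edge of the Levitt decomposition can carry a nontrivial cyclic stabilizer (otherwise it would contribute a positive-length arc with nontrivial stabilizer in $T$). Hence the Bass--Serre tree $S_T$ of the Levitt decomposition is in fact a $(G,\mathcal{F})$-free splitting, and each Levitt vertex stabilizer $G_v$ is a $(G,\mathcal{F})$-free factor acting with dense orbits on its vertex tree $T_v$.

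If $S_T$ is nontrivial, the stabilizers $G_v$ are proper $(G,\mathcal{F})$-free factors. If some $T_v$ is nondegenerate, then $G_v$ acts with dense orbits on $T_v$ without fixing a point of $T$, exhibiting $G_v$ as a dynamical proper free factor, so case 2 holds. Otherwise every $T_v$ is a single point, so $T = S_T$ is a simplicial free splitting and every point stabilizer is itself a proper $(G,\mathcal{F})$-free factor; since $T$ is not relatively free, at least one is nonperipheral, yielding case 3.

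Suppose now that $S_T$ is trivial, so $T$ has dense $G$-orbits. If $T$ is arational, then because $T$ is not relatively free it admits a nonperipheral point stabilizer $H$; arationality prevents $H$ from lying in any proper free factor (otherwise that factor's minimal subtree would fail to be relatively free). Proposition \ref{surface-type} therefore shows that $T$ is of surface type, and arationality combined with surface type forces $T$ to be an arational surface tree, i.e.\ case 1. If $T$ is not arational, then some proper $(G,\mathcal{F})$-free factor $F$ either has non-simplicial minimal subtree $T_F$ or contains a nonperipheral elliptic subgroup. In the non-simplicial case, Proposition \ref{Levitt} applied to $T_F$ (whose arc stabilizers remain trivial) produces trivial edges and a nondegenerate vertex tree; the corresponding vertex stabilizer, being a vertex group of a free splitting of $F$ refining a free splitting of $G$ exhibiting $F$, is a proper $(G,\mathcal{F})$-free factor, and Lemma \ref{nonsimplicial} guarantees that it acts with dense orbits -- giving case 2. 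In the remaining elliptic sub-case, if every noncyclic nonperipheral point stabilizer of $T$ lies in a proper free factor we are in case 3; otherwise some such stabilizer $H$ escapes every proper free factor, Proposition \ref{surface-type} makes $T$ of surface type, and Proposition \ref{outer-limits} applies. Its sub-case (ii) is impossible here, since the one-edge free splitting would place every point stabilizer inside a vertex group that is a proper $(G,\mathcal{F})$-free factor, contradicting the choice of $H$; so we are in sub-case (i), with a nonperipheral boundary element $b$ not conjugate into any skeleton edge group. The orbifold piece containing $b$ then either matches the arational surface construction of Section \ref{sec-4} exactly (forcing case 1, which would however contradict the existence of a noncyclic $H$ unless $H$ was actually cyclic to begin with) or embeds in a proper $(G,\mathcal{F})$-free factor acting densely on its minimal subtree, providing case 2.

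The main obstacle is this final step: ruling out sub-case (i) of Proposition \ref{outer-limits} in favour of either case 1 or case 2 under the constraints that $T$ has dense orbits and carries a noncyclic nonperipheral stabilizer escaping every proper free factor. One must analyse the transverse covering carefully enough to either reconstruct the orbifold graph of groups $\mathcal{G}$ of the arational surface construction of Section \ref{sec-4} from the skeleton data (when the stabilizer is in fact cyclic, giving case 1), or promote the stabilizer of an orbifold piece to a proper $(G,\mathcal{F})$-free factor acting with dense orbits on its minimal subtree (giving case 2), using that all the other boundary and conical elements of the orbifold must be amalgamated with peripheral subgroups through the skeleton edge groups.
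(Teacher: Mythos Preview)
Your overall strategy has the right shape, and the reduction to dense orbits via the Levitt decomposition is correct (though the paper bypasses it). However, there are two genuine gaps.

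First, in the dense-orbits arational sub-case you assert that ``arationality combined with surface type forces $T$ to be an arational surface tree'' without justification. This is precisely the content that needs to be established: one must show that the transverse covering has a single orbit of orbifold pieces, that all boundary curves and conical points except one are peripheral, that the skeleton has no edge with trivial stabilizer, and that the remaining attaching data matches the construction of Section~\ref{sec-4}. The paper carries out exactly this analysis at the end of its proof; you cannot invoke it as already known.

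Second, and more seriously, your own final paragraph concedes that the surface-type analysis in sub-case~(i) of Proposition~\ref{outer-limits} is incomplete. The missing ingredient is a concrete geometric construction: once there is a nonperipheral boundary curve $c$ on an orbifold $\Sigma$ that is not conjugate into any skeleton edge group, take an essential arc on $\Sigma$ with both endpoints on $c$. This arc determines a $(G,\mathcal{F})$-\emph{free} splitting of $G$. If the covering has at least two orbits of orbifold pieces, the other orbifold groups are elliptic in this splitting, hence lie in proper free factors, and Lemma~\ref{dense} gives case~2. The same splitting shows that any nonperipheral point stabilizer not conjugate to $\langle c\rangle$ lies in a proper free factor, giving case~3. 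Only when there is a single orbifold orbit, no trivial-stabilizer skeleton edges, and every nonperipheral point stabilizer is conjugate to $\langle c\rangle$ does one conclude that $T$ is an arational surface tree. Your sketch gestures toward this dichotomy but never produces the splitting.

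A cleaner route---the one the paper takes---is to drop arationality as an organizing notion altogether: branch immediately on whether every elliptic subgroup of $T$ lies in a proper free factor (yielding case~3 at once since $T$ is not relatively free) or not (forcing surface type via Proposition~\ref{surface-type}), and then run the transverse-covering analysis above. This avoids both the arational/non-arational split and the awkwardness of invoking structural facts about arational trees inside a lemma that is itself used to study them.
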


\begin{proof}
If all elliptic subgroups of $T$ are contained in proper $(G,\mathcal{F})$-free factors, then the last assertion holds. Otherwise, Lemma \ref{surface-type} implies that $T$ is a tree of surface type. Let $\mathcal{Y}$ be the transverse covering of $T$ provided by the definition of trees of surface type. 

If the stabilizer of a tree in $\mathcal{Y}$ dual to an arational measured foliation on a compact $2$-orbifold is contained in a proper $(G,\mathcal{F})$-free factor, then the second assertion holds by Lemma \ref{dense}. This occurs in particular if the skeleton of $\mathcal{Y}$ contains an edge with trivial stabilizer, so we can assume that this is not the case.
 
Otherwise, Proposition \ref{outer-limits} implies that there exists an element of $G$, represented by a boundary curve $c$ of an orbifold $\Sigma$ dual to a tree in $\mathcal{Y}$, that is nonperipheral, and not conjugate into any edge group of the skeleton of $\mathcal{Y}$. If the transverse covering $\mathcal{Y}$ contains at least two orbits of nondegenerate trees, then an arc on $\Sigma$ whose endpoints lie on $c$ determines a $(G,\mathcal{F})$-free splitting, in which the other orbifold groups are elliptic, and hence contained in a proper $(G,\mathcal{F})$-free factor. Again, the second assertion of the lemma holds. Similarly, if there exists a point in $T$, whose stabilizer is nonperipheral and not conjugate to $c$, then the third conclusion of the lemma holds.

In the remaining case, the skeleton of $\mathcal{Y}$ contains a single orbit of vertices $v$ associated to a tree $T_0$ dual to an arational lamination on a $2$-orbifold $\mathcal{O}$. All vertices $v'$ adjacent to $v$ have stabilizer isomorphic to some $G_i$. The edge joining $v'$ to $v$ has nontrivial stabilizer, so it is attached in $T_0$ to a point corresponding to a boundary curve or a conical point of $\mathcal{O}$. In addition, all boundary curves (and conical points) of $\Sigma$ distinct from $c$ are peripheral. This implies that $T$ is an arational surface $(G,\mathcal{F})$-tree. 
\end{proof}

\begin{proof}[Proof of Proposition \ref{classification}]
Let $T\in\partial{\mathcal{O}(G,\mathcal{F})}$ be a tree which is not arational, and has no dynamical proper $(G,\mathcal{F})$-free factor. Then the $G$-action on $T$ is not relatively free. If $T$ has trivial arc stabilizers, then the conclusion follows from Lemma \ref{classif}.   

We now assume that $T$ contains an arc $e$ with nontrivial stabilizer, and let $S$ be the very small simplicial $(G,\mathcal{F})$-tree obtained by collapsing to points all vertex trees in the Levitt decomposition of $T$ as a graph of actions (Proposition \ref{Levitt}). The stabilizer $G_e$ of $e$ in $T$ also stabilizes an edge in $S$. By Lemma \ref{stab-cyclic}, the group $G_e$ is contained in a proper $(G,\mathcal{F})$-free factor, and in addition $G_e$ is nonperipheral because $T$ is very small. We can thus choose for $x$ some interior point of $e$. 
\end{proof}

\subsection{Arational $(G,\mathcal{F})$-trees are $\mathcal{Z}$-averse.}

\begin{prop}\label{ATX}
Let $G$ be a countable group, and let $\mathcal{F}$ be a free factor system of $G$. Then $\mathcal{AT}(G,\mathcal{F})\subseteq\mathcal{X}(G,\mathcal{F})$.
\end{prop}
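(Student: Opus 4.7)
The plan is to apply Proposition \ref{mixing-representative}, which implies that any $\mathcal{Z}$-incompatible mixing tree in $\overline{\mathcal{O}(G,\mathcal{F})}$ lies in $\mathcal{X}(G,\mathcal{F})$. So I would show that every arational $(G,\mathcal{F})$-tree $T$ is both mixing and $\mathcal{Z}$-incompatible.

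First I would prove that $T$ has dense $G$-orbits. Apply Proposition \ref{Levitt} to decompose $T$ as a graph of actions whose vertex trees have dense orbits (or are trivial) and whose edges have trivial or maximally-cyclic nonperipheral stabilizer. Any edge with nontrivial cyclic stabilizer $\langle z \rangle$ would, by Lemma \ref{stab-cyclic}, force $\langle z \rangle$ into a proper $(G,\mathcal{F})$-free factor $H$; but $\langle z \rangle$ fixes an arc of positive length in $T$, hence in the $H$-minimal subtree $T_H$, contradicting that $T_H$ is Grushko (so has trivial arc stabilizers). The presence of an edge with trivial stabilizer would yield a nontrivial $(G,\mathcal{F})$-free splitting as the skeleton of the Levitt decomposition, whose proper-free-factor vertex groups would either fix a point of $T$ (violating that proper factors are non-elliptic in an arational tree) or stabilize a non-trivial vertex tree with dense orbits, forcing the corresponding minimal subtree in $T$ to fail to be simplicial. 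So the Levitt decomposition of $T$ is trivial, hence $T$ has dense orbits; in particular $T$ has trivial arc stabilizers.

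Next I would show $T$ is mixing, i.e.\ $T$ admits no nontrivial transverse family $\mathcal{Y}$. If such a $\mathcal{Y}$ existed, pick $Y \in \mathcal{Y}$; minimality of $T$ forces $\mathrm{Stab}(Y) \neq G$. The task is to show $\mathrm{Stab}(Y)$ lies in a proper $(G,\mathcal{F})$-free factor $H$, for then Lemma \ref{nonsimplicial} gives that the $H$-minimal subtree of $T$ is not a Grushko $(H,\mathcal{F}_H)$-tree, contradicting arationality. This containment is the main obstacle I anticipate. It would be proved by analysing the skeleton of $\mathcal{Y}$, whose edge stabilizers are tightly constrained because $T$ has trivial arc stabilizers, together with the Kurosh-rank bound of Proposition \ref{per-finite} on point stabilizers of $T$; the classification of Proposition \ref{classification}, applied to $T$ or to suitable subtrees of it, could then be invoked to rule out the potentially problematic case where $\mathrm{Fill}(\mathrm{Stab}(Y))$ would be all of $G$.

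Finally, for $\mathcal{Z}$-incompatibility, suppose $T$ were compatible with a $\mathcal{Z}$-splitting $S$. The construction recalled after Proposition \ref{skeleton}, applied to the sum $T + S$ and the alignment-preserving projection $\pi_T : T+S \to T$, produces a nontrivial transverse covering of $T$, contradicting the mixing property established above. Combining mixing and $\mathcal{Z}$-incompatibility via Proposition \ref{mixing-representative} then yields $T \in \mathcal{X}(G,\mathcal{F})$, as desired.
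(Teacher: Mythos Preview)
Your global plan---show every arational tree is mixing and $\mathcal{Z}$-incompatible, then apply Proposition \ref{mixing-representative}---is the paper's. The gap is that your steps 2 and 3 both rest on the assertion that mixing is equivalent to admitting no nontrivial transverse family, which is unjustified and in fact false. Arational surface $(G,\mathcal{F})$-trees with $\mathcal{F}\neq\emptyset$ already show this: the $G$-translates of the $\pi_1(\mathcal{O})$-minimal subtree form a nontrivial transverse covering, and the stabilizer $\pi_1(\mathcal{O})$ is \emph{not} contained in any proper $(G,\mathcal{F})$-free factor (this is exactly what the construction is designed to achieve). Hence the ``task'' you set yourself in step 2---placing $\mathrm{Stab}(Y)$ inside a proper free factor for an arbitrary member $Y$ of a transverse family in an arational tree---is impossible in general; your anticipated obstacle is a genuine obstruction, not a difficulty to be overcome. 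Likewise in step 3, a compatible $\mathcal{Z}$-splitting does yield a nontrivial transverse covering of $T$, but that alone does not contradict mixing.

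The paper argues in the opposite order. It proves $\mathcal{Z}$-incompatibility first and directly (Lemma \ref{at-incompatible}): the graph-of-actions decomposition coming from a compatible $\mathcal{Z}$-splitting has either a nontrivial cyclic edge group (nonperipheral, elliptic in $T$, and contained in a proper free factor by Lemma \ref{stab-cyclic}, contradicting arationality via Lemma \ref{dense}) or only trivial edge groups, so that vertex groups are proper free factors. For mixing (Lemma \ref{at-mixing}), the paper does not work with an arbitrary transverse family: from a failure of mixing it builds a subtree $Y_I$ with the stronger disjointness property $gY_I\cap Y_I=\emptyset$ for $g\notin\mathrm{Stab}(Y_I)$, then invokes a collapse result (Lemma \ref{mixing-collapse}) to pass to a mixing quotient $\overline{T}$ in which $\mathrm{Stab}(Y_I)$ is elliptic, and applies Lemma \ref{classif} to $\overline{T}$ to produce a proper free factor violating arationality of $T$. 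Your step 1 is essentially sound, though the claim that the arc fixed by $z$ lies in $T_H$ is not quite right; what you really need is that a nonperipheral $z\in H$ is hyperbolic in a Grushko $(H,\mathcal{F}_H)$-tree, hence hyperbolic in $T$, contradicting that $z$ fixes an arc. The paper obtains dense orbits more cheaply, as an immediate consequence of $\mathcal{Z}$-incompatibility.
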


\begin{proof}
In view of Proposition \ref{mixing-representative}, it is enough to show that any tree $T\in\mathcal{AT}(G,\mathcal{F})$ is both $\mathcal{Z}$-incompatible and mixing. This will be done in Lemmas \ref{at-incompatible} and \ref{at-mixing}. 
\end{proof}

\begin{lemma}\label{at-incompatible}
Every arational $(G,\mathcal{F})$-tree is $\mathcal{Z}$-incompatible.
\end{lemma}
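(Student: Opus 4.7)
The plan is to argue by contradiction: suppose $T \in \mathcal{AT}(G,\mathcal{F})$ is compatible with some $\mathcal{Z}$-splitting $S$ of $(G,\mathcal{F})$. As recalled in the discussion following Proposition \ref{skeleton}, the common refinement $T+S$ carries a transverse covering by the nondegenerate $\pi_S$-preimages of vertices together with closures of $\pi_S$-preimages of open edges of $S$, whose image under the alignment-preserving map $\pi_T$ is a nontrivial transverse covering $\pi_T(\mathcal{Y})$ of $T$. I will use arationality to force $T$ to be a Grushko $(G,\mathcal{F})$-tree, contradicting $T \in \partial\mathcal{O}(G,\mathcal{F})$.

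The first step is to show that every edge stabilizer of $S$ must be trivial. Suppose some edge $e$ of $S$ has nontrivial cyclic nonperipheral stabilizer $C_e$. By Lemma \ref{stab-cyclic}, there is a proper $(G,\mathcal{F})$-free factor $H \supseteq C_e$. Arationality of $T$ gives that the $H$-minimal subtree $T_H$ is a Grushko $(H,\mathcal{F}_H)$-tree, so $H$ acts relatively freely on $T_H$. Since the peripheral subgroups of $(H,\mathcal{F}_H)$ are, by their Kurosh description, peripheral in $(G,\mathcal{F})$, any nonperipheral element of $G$ lying in $H$ remains nonperipheral in $(H,\mathcal{F}_H)$; thus a generator $c$ of $C_e$ acts loxodromically on $T_H$, and hence on $T$. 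But $c$ fixes $e$ pointwise in $S$, so it fixes every point of $\pi_S^{-1}(e) \subseteq T+S$, and $\pi_T$ carries these fixed points to $c$-fixed points of $T$, contradicting loxodromicity. Hence $S$ must be a $(G,\mathcal{F})$-free splitting.

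With $S$ now a $(G,\mathcal{F})$-free splitting, each vertex group $A_v$ is a $(G,\mathcal{F})$-free factor, either peripheral (and hence elliptic in $T$) or proper; in the proper case, arationality provides that the $A_v$-minimal subtree $T_{A_v}$ of $T$ is a Grushko $(A_v,\mathcal{F}_{A_v})$-tree. By Proposition \ref{skeleton} applied to $\pi_T(\mathcal{Y})$, the tree $T$ splits as a nontrivial $(G,\mathcal{F})$-graph of actions over $S$ whose vertex actions are the (translates of the) $T_{A_v}$'s in the proper case and reduce to points in the peripheral case, connected by edges of trivial stabilizer inherited from $S$. Assembling these simplicial pieces yields a simplicial $(G,\mathcal{F})$-tree with trivial edge stabilizers and peripheral point stabilizers, that is, a Grushko $(G,\mathcal{F})$-tree lying in $\mathcal{O}(G,\mathcal{F})$, contradicting $T \in \partial\mathcal{O}(G,\mathcal{F})$.

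The main technical point I expect to require care is the identification, in the second step, of the vertex subtrees $\pi_T(\pi_S^{-1}(v))$ in the $T$-graph-of-actions as precisely the minimal subtrees $T_{A_v}$, rather than possibly strictly larger $A_v$-invariant subtrees that could carry non-simplicial structure. This is settled by analyzing the fibers of $\pi_S$ in $T+S$: since $A_v$ fixes $v$ in $S$, its action on $T+S$ preserves the fiber $\pi_S^{-1}(v)$, and the length-function identity $\|g\|_{T+S}=\|g\|_T+\|g\|_S$ (vanishing of $\|g\|_S$ for $g\in A_v$) together with alignment preservation of $\pi_T$ allow one to conclude that this fiber projects exactly onto the $A_v$-minimal subtree of $T$, preserving the Grushko structure as needed.
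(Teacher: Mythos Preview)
Your approach is essentially the paper's: pass to the graph of actions induced by compatibility with $S$, eliminate nontrivial edge groups via Lemma~\ref{stab-cyclic} and arationality, then in the free-splitting case conclude $T$ is Grushko. Your Step~1 is a hands-on variant of the paper's one-line observation that an edge group of the graph of actions is automatically elliptic in $T$ (it fixes an attaching point in a vertex tree); the paper then quotes Lemma~\ref{dense}, while you unwind the definition of arational directly. Both are fine.

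The one genuine slip is in your final paragraph. The claim that $\pi_T(\pi_S^{-1}(v))$ coincides with the $A_v$-minimal subtree $T_{A_v}$ is not true in general: the vertex piece of the transverse covering also contains the attaching points of incident edges, which need not lie in $T_{A_v}$. The length-function identity $\|g\|_{T+S}=\|g\|_T+\|g\|_S$ only constrains translation lengths, not the shape of fibers, so it cannot establish what you want, and alignment preservation of $\pi_T$ does not help either. Fortunately this does not damage the argument. Since all edge groups of $S$ are trivial, the complement $\pi_T(\pi_S^{-1}(v))\smallsetminus T_{A_v}$ consists of arcs with trivial stabilizer; combined with arationality (which forces $T_{A_v}$ to be a Grushko $(A_v,\mathcal{F}_{A_v})$-tree when $A_v$ is proper, and forbids a proper $A_v$ from being elliptic), the full vertex piece is still simplicial with trivial arc stabilizers and peripheral point stabilizers. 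Assembling over $S$ then yields a simplicial $(G,\mathcal{F})$-tree with trivial edge stabilizers, so $T$ is Grushko or has a proper free factor as a point stabilizer, and either way $T\notin\mathcal{AT}(G,\mathcal{F})$. This is exactly how the paper phrases the conclusion, without ever asserting that the vertex trees are minimal.
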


\begin{proof}[Proof of Lemma \ref{at-incompatible}]
Let $T\in\overline{\mathcal{O}(G,\mathcal{F})}$ be a $\mathcal{Z}$-compatible tree. It follows from the discussion below Proposition \ref{skeleton} that $T$ splits as a $(G,\mathcal{F})$-graph of actions $\mathcal{G}$, whose edge groups are either trivial, or cyclic and nonperipheral. If $\mathcal{G}$ contains a nontrivial edge group $G_e$, then $G_e$ must be elliptic in $T$. The group $G_e$ is contained in a proper $(G,\mathcal{F})$-free factor $F$ (Lemma \ref{stab-cyclic}), and it is nonperipheral because $T$ is very small. By Lemma \ref{dense}, the tree $T$ is not arational. 

If all edge groups of $\mathcal{G}$ are trivial, then all vertex groups of $\mathcal{G}$ are proper $(G,\mathcal{F})$-free factors. If all vertex actions of $\mathcal{G}$ are Grushko $(G_v,\mathcal{F}_{G_v})$-trees, then $T$ is simplicial, with trivial edge stabilizers. So either $T$ is a Grushko $(G,\mathcal{F})$-tree, or some vertex stabilizer of $T$ is a proper free factor that acts elliptically on $T$. In both cases, the tree $T$ is not arational.
\end{proof}

The following lemma was proved by Reynolds in \cite[Proposition 8.3]{Rey12} in the case of $F_N$-trees in the closure of Culler and Vogtmann's outer space.

\begin{lemma}\label{at-mixing}
Every arational $(G,\mathcal{F})$-tree is mixing.
\end{lemma}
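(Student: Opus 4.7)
I argue by contradiction, using Lemma \ref{nonsimplicial} as the key tool.

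As a preliminary, I observe that $T$ has dense orbits. Since $T$ is $\mathcal{Z}$-incompatible by Lemma \ref{at-incompatible}, the Levitt decomposition of $T$ (Proposition \ref{Levitt}) must be trivial: otherwise, collapsing all vertex trees to points would yield a nontrivial $\mathcal{Z}$-splitting compatible with $T$, since the edges of that decomposition have trivial or maximally-cyclic nonperipheral stabilizers. So the Levitt decomposition of $T$ is trivial, and $T$ has dense orbits.

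Next, suppose for contradiction that $T$ is not mixing. Fix finite arcs $I, J \subseteq T$ such that no chain $g_1 I, \ldots, g_n I$ of $G$-translates of $I$ with $g_i I \cap g_{i+1} I \neq \emptyset$ covers $J$. Let $Y \subseteq T$ be the closure of the union of all translates $g I$ joined to $I$ by such a finite chain. Then $Y$ is a closed connected subtree, and $Y \neq T$ because $J$ is a compact arc that would otherwise be covered by finitely many chain-accessible translates forming a chain. Moreover, the family $\mathcal{Y} := \{h Y\}_{h \in G}$ is a nontrivial $G$-invariant transverse family of subtrees of $T$: if $h Y \cap Y$ contained a nondegenerate arc, one could cover that arc by chain-accessible translates from both $I$ and $h I$, and concatenating the two chains would yield a chain from $I$ to $h I$, forcing $h Y = Y$.

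The crux of the argument, and its main obstacle, is to show that $\text{Stab}(Y)$ is contained in a proper $(G,\mathcal{F})$-free factor; Lemma \ref{nonsimplicial} will then force the minimal subtree of this free factor in $T$ to fail to be Grushko, contradicting arationality. My plan is to enlarge $\mathcal{Y}$ to a transverse covering of $T$ and apply Proposition \ref{skeleton} to realize $T$ as a nontrivial $(G,\mathcal{F})$-graph of actions with simplicial skeleton $S$, in which $Y$ appears as a vertex tree with vertex stabilizer $\text{Stab}(Y)$. I would then argue that the edge stabilizers of $S$ are trivial: any one-edge collapse of $S$ is compatible with $T$, and should its edge stabilizer be nontrivial, the very-smallness of $T$ together with Lemma \ref{stab-cyclic} would place this collapse inside the class of $\mathcal{Z}$-splittings compatible with $T$, contradicting the $\mathcal{Z}$-incompatibility established in Lemma \ref{at-incompatible}. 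Once the edge stabilizers of $S$ are trivial, $S$ is a $(G,\mathcal{F})$-free splitting, exhibiting $\text{Stab}(Y)$ as a $(G,\mathcal{F})$-free factor, which is proper because the nontriviality of $\mathcal{Y}$ provides at least two $G$-orbits of vertices in $S$. The main technical difficulty is thus the setup of the transverse covering and the careful analysis of edge stabilizers in its skeleton; once these are handled, Lemma \ref{nonsimplicial} closes the argument.
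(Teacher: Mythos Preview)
Your overall strategy---build a transverse family from the non-mixing assumption, promote it to a transverse covering, and show the skeleton is a free splitting---has a genuine gap at the edge-stabilizer step. Edge stabilizers of the skeleton $S$ are \emph{point} stabilizers in $T$ (at the attaching points where two pieces of the covering meet), not arc stabilizers. Very-smallness of $T$ only constrains arc stabilizers; since $T$ has dense orbits these are trivial, but point stabilizers in an arational tree can certainly be nontrivial and need not be cyclic and nonperipheral (think of the peripheral groups $G_i$ sitting at the attaching points in an arational surface tree). So a one-edge collapse of $S$ need not be a $\mathcal{Z}$-splitting, and $\mathcal{Z}$-incompatibility of $T$ gives you no contradiction. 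Lemma~\ref{stab-cyclic} is about edge groups of $\mathcal{Z}$-splittings and cannot be invoked before you know $S$ is one. The ``enlarge $\mathcal{Y}$ to a transverse covering'' step is also not innocent: the translates $gY$ need not cover $T$, and what you add to fill the gaps affects the edge groups of the skeleton.

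The paper avoids this entirely by a different route. It uses Lemma~\ref{mixing-collapse} (a structural result from Guirardel--Levitt and \cite{Hor14-6}): given the subtree $Y_I$ with disjoint-or-equal translates, either $T$ is compatible with a $(G,\mathcal{F})$-free splitting---ruled out by Lemma~\ref{at-incompatible}---or $T$ collapses onto a \emph{mixing} tree $\overline{T}$ in which $\text{Stab}(Y_I)$ is elliptic. One then applies the classification Lemma~\ref{classif} to $\overline{T}$ (noting $\text{Stab}(Y_I)$ is non-cyclic by Lemma~\ref{nonsimplicial}) to produce a proper $(G,\mathcal{F})$-free factor whose minimal subtree in $T$ is not Grushko, contradicting arationality. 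The passage through the auxiliary mixing tree $\overline{T}$ is precisely what lets the paper sidestep the edge-stabilizer analysis that trips up your plan.
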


Let $T,\overline{T}\in\overline{\mathcal{O}(G,\mathcal{F})}$. We say that $T$ \emph{collapses} onto $\overline{T}$ if there exists a $G$-equivariant map $p:T\to \overline{T}$ that sends segments of $T$ onto segments of $\overline{T}$. The following lemma follows from work by Guirardel and Levitt \cite{GL14-2}, together with \cite[Proposition 5.17]{Hor14-6}.

\begin{lemma}\label{mixing-collapse}
Let $T\in\overline{\mathcal{O}(G,\mathcal{F})}$ be a tree with dense $G$-orbits, and let $Y\varsubsetneq T$ be a proper subtree, such that for all $g\in G$, either $gY=Y$, or $gY\cap Y=\emptyset$. Then either $T$ is compatible with a $(G,\mathcal{F})$-free splitting, or else $T$ collapses onto a mixing tree $\overline{T}\in\overline{\mathcal{O}(G,\mathcal{F})}$ in which $\text{Stab}(Y)$ is elliptic. 
\qed
\end{lemma}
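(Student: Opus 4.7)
The plan is to construct a quotient tree of $T$ by equivariantly collapsing every translate of $Y$ to a point, to verify that this quotient lives in $\overline{\mathcal{O}(G,\mathcal{F})}$, and then to apply \cite[Proposition 5.17]{Hor14-6} to obtain the stated dichotomy.

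First I would replace $Y$ by its closure $\bar Y$ in $T$, noting that the translates $\{g\bar Y\}_{g\in G}$ still form a $G$-invariant transverse family: distinct translates meet in at most one point, since this property is inherited from $Y$ by standard arguments about $\mathbb{R}$-trees. Collapsing each $g\bar Y$ equivariantly to a point --- formally, passing to the quotient $T_1 := T/{\sim}$ of $T$ by the equivalence relation $x\sim y$ iff $x=y$ or $x,y\in g\bar Y$ for some $g\in G$ --- yields a metric tree carrying a natural $G$-equivariant isometric action, together with a $G$-equivariant alignment-preserving projection $p:T\to T_1$; in the terminology of the paper, $T$ collapses onto $T_1$.

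Next I would check that $T_1\in\overline{\mathcal{O}(G,\mathcal{F})}$. Minimality descends from $T$ because $p$ is surjective and equivariant; peripheral subgroups remain elliptic since their $T$-fixed points map to $T_1$-fixed points; and arc stabilizers stay very small because an arc in $T_1$ lifts to a concatenation of arc segments of $T$ joined by points lying inside various translates $g\bar Y$, and any element fixing the arc in $T_1$ must simultaneously fix each lifted segment in $T$ and stabilize each $g\bar Y$ it meets. Since $T$ is very small, this forces arc stabilizers of $T_1$ to be either trivial or maximally cyclic and nonperipheral, and tripod stabilizers of $T_1$ to be trivial. By construction, $\mathrm{Stab}(Y)\subseteq\mathrm{Stab}(\bar Y)$ fixes the image $p(\bar Y)$, which is a single point of $T_1$, so $\mathrm{Stab}(Y)$ is elliptic in $T_1$. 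This is the step where I would appeal to the work of Guirardel and Levitt \cite{GL14-2} for the precise verification that this quotient construction remains inside $\overline{\mathcal{O}(G,\mathcal{F})}$.

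Finally, I would apply \cite[Proposition 5.17]{Hor14-6} to $T_1$: either $T_1$ is compatible with a $(G,\mathcal{F})$-free splitting $S$, or $T_1$ collapses further onto a mixing tree $\overline T\in\overline{\mathcal{O}(G,\mathcal{F})}$. In the first case, since $T$ refines $T_1$ via $p$, a common refinement of $T_1$ and $S$ can be pulled back through $p$ to a common refinement of $T$ and $S$, so $T$ itself is compatible with $S$ and the first conclusion holds. In the second case, composing $p$ with the collapse $T_1\to\overline T$ shows that $T$ collapses onto the mixing tree $\overline T$, and since ellipticity is preserved under equivariant collapses, $\mathrm{Stab}(Y)$ remains elliptic in $\overline T$. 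The main obstacle I expect is the verification in the second step: ensuring that collapsing the transverse family does not create unexpectedly large arc or tripod stabilizers and that the dense-orbits property is preserved under $p$; once $T_1$ is known to lie in $\overline{\mathcal{O}(G,\mathcal{F})}$ with dense orbits, the dichotomy follows directly from the cited proposition.
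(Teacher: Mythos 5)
The paper itself gives no proof of this lemma, deferring entirely to \cite{GL14-2} and \cite[Proposition 5.17]{Hor14-6}, so I can only assess your reconstruction on its internal logic; and there I see a genuine gap in the last step. You claim that ``since $T$ refines $T_1$ via $p$, a common refinement of $T_1$ and $S$ can be pulled back through $p$ to a common refinement of $T$ and $S$, so $T$ itself is compatible with $S$.'' This implication is false in general: two refinements of the same tree need not be compatible with each other. For a concrete counterexample, take $G=F_3=\langle a,b,c\rangle$, let $T_1$ be the Bass--Serre tree of $\langle a,b\rangle\ast\langle c\rangle$, and let $T$ and $S$ be the refinements of $T_1$ obtained by blowing up the $\langle a,b\rangle$-vertex into the Bass--Serre trees of $\langle a\rangle\ast\langle b\rangle$ and of $\langle ab\rangle\ast\langle b\rangle$ respectively. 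Both $T$ and $S$ collapse onto $T_1$, so $T_1$ is certainly compatible with $S$; but $T$ and $S$ have no common refinement, since a tree in which $a$, $b$ and $ab$ are all elliptic would have $F_2=\langle a,b\rangle$ global fixed points. The natural ``pullback'' you would want, the fiber product $T\times_{T_1}(T_1+S)$, fails to be a tree: over a point $v\in T_1$ that is simultaneously the image of a nondegenerate $g\bar Y\subset T$ and of a nondegenerate subtree of $T_1+S$, the fiber is a genuine product of two nondegenerate trees. Your example is simplicial whereas the lemma assumes $T$ has dense orbits, and it is conceivable that denseness rescues the implication, but you neither state nor use this hypothesis at the crucial moment, so as written the step does not go through. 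Whatever \cite[Proposition 5.17]{Hor14-6} actually asserts, you will need either a direct argument that $T$ (not merely $T_1$) is compatible with \emph{some} free splitting in the first branch, or a formulation of the dichotomy that applies to $T$ itself.

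Two smaller points. First, nothing in your argument rules out $\bar Y=T$ (a proper subtree can be dense), in which case $T_1$ degenerates to a point and the collapse map carries no information; you should observe that minimality of $T$ together with the invariance hypothesis on $Y$ excludes this, or handle it separately. Second, the relation ``$x\sim y$ iff $x=y$ or $x,y\in g\bar Y$ for some $g$'' is not itself an equivalence relation when distinct closures $g\bar Y$, $g'\bar Y$ share an endpoint; the classes are really the connected components of $\bigcup_g g\bar Y$, and their stabilizers can strictly contain $\mathrm{Stab}(Y)$. This does not hurt the ellipticity conclusion (a larger group fixing $p(\bar Y)$ still shows $\mathrm{Stab}(Y)$ is elliptic), but the quotient should be defined via the generated equivalence relation, and the very-smallness check should be phrased in terms of those larger components. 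That check, incidentally, can be made rigorous: any $h$ fixing a nondegenerate arc of $T_1$ pointwise fixes uncountably many points of $T$, hence a nondegenerate arc of $T$, hence $h=1$ since $T$ has dense orbits and therefore trivial arc stabilizers; so $T_1$ in fact has trivial arc stabilizers, and your appeal to \cite{GL14-2} for this point can be replaced by this short argument.
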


\begin{proof}[Proof of Lemma \ref{at-mixing}]
Let $T\in\mathcal{AT}(G,\mathcal{F})$. Then $T$ has dense orbits, otherwise any simplicial edge in $T$ would be dual to a $\mathcal{Z}$-splitting that is compatible with $T$, contradicting Lemma \ref{at-incompatible}. Assume towards a contradiction that $T$ is not mixing, and let $I\subset T$ be a segment. Define $Y_I$ to be the subtree of $T$ consisting of all points $x\in T$ such that there exists a finite set of elements $\{g_0=e,g_1,\dots,g_r\}\subset G$, with $x\in g_rI$, and $g_iI\cap g_{i+1}I\neq\emptyset$ for all $i\in\{0,\dots,r-1\}$. Then for all $g\in G$, we either have $gY_I=Y_I$, or $gY_I\cap Y_I=\emptyset$.

As $T$ is not mixing, there exists a nondegenerate arc $I\subset T$ such that $Y_I$ is a proper subtree of $T$. By Lemma \ref{mixing-collapse}, either $T$ is compatible with a $(G,\mathcal{F})$-free splitting, or else $T$ collapses onto a mixing tree $\overline{T}\in\overline{\mathcal{O}(G,\mathcal{F})}$, in which $\text{Stab}(Y_I)$ is elliptic. The first case is excluded by Lemma \ref{at-incompatible}, so we assume that we are in the second case. As $T$ has dense orbits, the stabilizer $\text{Stab}(Y_I)$ is not cyclic by Lemma \ref{nonsimplicial}. It thus follows from Lemma \ref{classif} that either $\overline{T}$ has a dynamical proper $(G,\mathcal{F})$-free factor $F$ (if the second situation of Lemma \ref{classif} occurs), or else $\text{Stab}(Y_I)$ is contained in a proper $(G,\mathcal{F})$-free factor (if the third situation of this lemma occurs). In the first case, the $F$-minimal subtree $T_F$ of $T$ cannot be a Grushko $(F,\mathcal{F}_F)$-tree, because $T_F$ collapses to a nontrivial tree with dense orbits in $\overline{T}$. This contradicts arationality of $T$. Hence the second case occurs, i.e. $\text{Stab}(Y_I)$ is contained in a proper $(G,\mathcal{F})$-free factor $F$. By Lemma \ref{nonsimplicial}, the $F$-minimal subtree of $T$ is not a Grushko $(F,\mathcal{F}_{F})$-tree, again contradicting arationality of $T$.
\end{proof}

\subsection{Finite sets of reducing factors associated to non-arational $(G,\mathcal{F})$-trees}

Given a $(G,\mathcal{F})$-tree $T\in\overline{P\mathcal{O}(G,\mathcal{F})}$, we denote by $\text{Dyn}(T)$ the set of minimal (with respect to inclusion) conjugacy classes of dynamical proper $(G,\mathcal{F})$-free factors for $T$. We denote by $\text{Ell}(T)$ the set of nonperipheral conjugacy classes of point stabilizers in $T$. Recall that given a subgroup $H\subseteq G$, we denote by $\text{Fill}(H)$ the smallest $(G,\mathcal{F})$-free factor that contains $H$. For all $\Phi\in\text{Out}(G,\mathcal{F}^{(t)})$, we have $\Phi \text{Dyn}(T)=\text{Dyn}(\Phi T)$, and $\Phi \text{Fill}({\text{Ell}}(T))=\text{Fill}(\text{Ell}(\Phi T))$. It follows from Proposition \ref{per-finite} that $\text{Ell}(T)$ is finite, we will now show that $\text{Dyn}(T)$ is also finite.

\begin{prop}\label{dyn-finite}
For all $T\in\overline{P\mathcal{O}(G,\mathcal{F})}$, the set $\text{Dyn}(T)$ is finite.
\end{prop}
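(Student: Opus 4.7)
The plan is to reduce the question to a finite combinatorial datum attached to $T$ — the Levitt decomposition supplied by Proposition~\ref{Levitt} — and to show that each minimal dynamical free factor is controlled (up to conjugation) by one of the finitely many vertex trees with dense orbits.

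First, apply Proposition~\ref{Levitt} to $T$: this writes $T$ as a $(G,\mathcal{F})$-graph of actions whose underlying (metric) graph of groups is finite, whose edges have positive length with either trivial or maximally cyclic nonperipheral stabilizer, and whose nondegenerate vertex trees carry dense actions of their stabilizers. Let $Y_1,\dots,Y_n$ be representatives of the $G$-orbits of nondegenerate vertex trees, with stabilizers $G_1,\dots,G_n$.

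Next, I would show that every dynamical proper $(G,\mathcal{F})$-free factor $H$ has a $G$-conjugate contained in some $G_i$. The Levitt transverse covering of $T$ restricts to a transverse family on the $H$-minimal subtree $T_H$. Since $T_H$ carries dense $H$-orbits while the Levitt edges have strictly positive length, the induced skeleton on $T_H$ (obtained via Proposition~\ref{skeleton}) cannot be nontrivial; otherwise it would produce a simplicial edge of positive length inside $T_H$, contradicting density. Thus $T_H$ lies in a single translate $gY_i$, and therefore $H\subseteq gG_ig^{-1}$.

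To finish, I would exploit minimality to pin down $[H]$ up to finitely many possibilities. If $[H]\in\text{Dyn}(T)$, then I claim that $T_H$ is arational as an $(H,\mathcal{F}_H)$-tree. Otherwise, Proposition~\ref{classification} applied inside $\overline{\mathcal{O}(H,\mathcal{F}_H)}$ would yield a proper $(H,\mathcal{F}_H)$-free factor $H'\subsetneq H$ acting with dense orbits on its minimal subtree; since a free factor of a free factor is again a free factor (by refining a $(G,\mathcal{F})$-free splitting that realizes $H$ by an $(H,\mathcal{F}_H)$-free splitting that realizes $H'$), the group $H'$ would be a strictly smaller dynamical $(G,\mathcal{F})$-free factor, contradicting the minimality of $[H]$. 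Then arationality of $T_H$ inside $Y_i$, together with the dense action of $G_i$ on $Y_i$, should force the $G$-conjugacy class of $H$ to be one of finitely many possibilities attached to $Y_i$, giving $|\text{Dyn}(T)|\le Cn<\infty$ for some bound $C$.

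The main obstacle I expect is the last step: controlling the $G$-conjugacy classes of minimal factors attached to a fixed $Y_i$. I would attack this by comparing two candidates $H_1,H_2\subseteq G_i$ through the intersection $H_1\cap H_2$ (which is again a $(G,\mathcal{F})$-free factor by the intersection property recalled in Section~\ref{sec-relative}) and using arationality of $T_{H_1}$ and $T_{H_2}$ inside $Y_i$ to argue that their minimal subtrees are $G_i$-translates of one another, whence $H_1$ and $H_2$ are conjugate in $G$.
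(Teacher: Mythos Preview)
Your reduction via the Levitt decomposition (Steps 1 and 2) is sound: every dynamical free factor has its minimal subtree contained in some vertex tree with dense orbits, so you may assume $T$ itself has dense $G$-orbits. The real content lies in your final step, and there the proposal has two genuine gaps.

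First, your claim that $T_H$ is arational for minimal $[H]\in\text{Dyn}(T)$ is not justified. You only rule out the third case of Proposition~\ref{classification}, but the fourth case---where $T_H$ has a nonperipheral point stabilizer lying in a proper $(H,\mathcal{F}_H)$-free factor yet has \emph{no} dynamical sub-free-factor---is perfectly compatible with dense $H$-orbits on $T_H$ and produces no smaller element of $\text{Dyn}(T)$. Minimality therefore does not force arationality.

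Second, and more seriously, even granting arationality you have no argument for finiteness. Saying that arationality ``should force'' finitely many possibilities, or that $T_{H_1}$ and $T_{H_2}$ should be $G_i$-translates, is not a proof; there is no a priori reason two arational subtrees of $Y_i$ are related by a translation, and $H_1\cap H_2$ need not act with dense orbits on anything---it may well be trivial or peripheral.

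The paper's route supplies exactly the missing transversality. Proposition~\ref{stab}, proved via a system of partial isometries and the Gaboriau--Levitt--Paulin inequality $m(S)-d(S)\ge 0$ for systems with independent generators, shows that for any $(G,\mathcal{F})$-free factor $H$ acting with dense orbits on $T_H$, the family $\{gT_H\}_{g\in G}$ is already transverse in $T$ and $\text{Stab}(T_H)=H$. Proposition~\ref{minimal-transverse} then shows that the full family $\{gT_H : H\in\text{Dyn}(T),\,g\in G\}$ is transverse: if $T_{H_1}\cap T_{H_2}$ contained an arc, its stabilizer would equal $H_1\cap H_2$, which by Lemma~\ref{nonsimplicial} would act with dense orbits on its minimal subtree, contradicting minimality unless $H_1=H_2$. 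Finiteness then follows immediately, since a transverse family in a tree with dense orbits has boundedly many $G$-orbits of subtrees (bounded by the number of orbits of directions at branch points). Note that the intersection $H_1\cap H_2$ enters here only \emph{after} one knows $T_{H_1}\cap T_{H_2}$ is nondegenerate, which is precisely what the partial-isometry argument of Proposition~\ref{stab} controls; your attempted use of $H_1\cap H_2$ lacks this input.
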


Let $T\in\overline{\mathcal{O}(G,\mathcal{F})}$. A finite subtree $K\subseteq T$ (i.e. the convex hull of a finite set of points) is a \emph{supporting subtree} of $T$ if for all segments $J\subseteq T$, there exists a finite subset $\{g_1,\dots g_r\}\subseteq G$ such that $J\subseteq g_1K\cup\dots\cup g_rK$.

\begin{lemma}\label{cost-1}
Let $T\in\overline{\mathcal{O}(G,\mathcal{F})}$ be a tree with dense orbits. For all $\epsilon>0$, there exists a finite supporting subtree $K\subseteq T$ whose volume is at most $\epsilon$.
\end{lemma}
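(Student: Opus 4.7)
The plan is to construct small finite supporting subtrees by translating short pieces of an initial supporting subtree into a small ball around a fixed basepoint, exploiting the density of $G$-orbits in $T$.

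First I would establish the existence of \emph{some} finite supporting subtree $K_0 \subseteq T$. When $T$ has dense orbits, Proposition \ref{Levitt} yields a graph of actions decomposition of $T$ whose vertex actions have dense orbits; a supporting subtree $K_0$ can then be built as the union of a finite subtree spanning a fundamental domain of the underlying Bass--Serre tree with finite supporting subtrees for each vertex action, using minimality of the $G$-action. In the simplest case, where $T$ itself is indecomposable, $K_0$ can be chosen as the convex hull of a sufficiently rich finite $G$-orbit of points.

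Given $\epsilon > 0$, I would proceed as follows. Subdivide $K_0$ into $n$ arcs $I_1, \ldots, I_n$, each of length at most $\epsilon/(4n)$, by cutting edges of $K_0$ sufficiently finely. Fix a basepoint $v \in T$. Since the $G$-orbit of any point is dense, for each $j$ one can choose $g_j \in G$ so that the midpoint of $g_j I_j$ lies within distance $\epsilon/(8n)$ of $v$; then each $g_j I_j$ is contained in the ball $B(v, R)$ of radius $R := \epsilon/(8n) + \epsilon/(8n) = \epsilon/(4n)$. Define $K$ to be the convex hull of $\bigcup_j g_j I_j$ in $T$. Since balls are convex in $\mathbb{R}$-trees, $K \subseteq B(v, R)$; and since $K$ has at most $2n$ leaves (contributed by the endpoints of the arcs $g_j I_j$), the standard estimate that a subtree with $m$ leaves contained in a ball of radius $R$ has total volume at most $(m+1)R$ yields
\[
\mathrm{vol}(K) \leq (2n+1)\cdot \frac{\epsilon}{4n} < \epsilon.
\]

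To verify that $K$ is supporting, let $J \subseteq T$ be any segment. Since $K_0$ is supporting, we can cover $J$ by finitely many translates $h_1 K_0, \ldots, h_r K_0$. For each pair $(k,j)$, the piece $h_k I_j = h_k g_j^{-1}(g_j I_j)$ is contained in the translate $h_k g_j^{-1} K$, so $J$ is covered by the finite collection $\{h_k g_j^{-1} K\}_{k,j}$ of translates of $K$, proving that $K$ is a finite supporting subtree of the required volume.

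The main obstacle lies in the first step, the existence of an initial finite supporting subtree $K_0$; this requires a careful assembly of pieces using Proposition \ref{Levitt} and the structure of graphs of actions whose vertex actions have dense orbits. Once $K_0$ is in hand, the shrinking argument is essentially a volume estimate: the density of $G$-orbits allows us to compress the total length of the finely subdivided pieces $I_j$ into a ball of radius $O(\epsilon/n)$, and the linear-in-$n$ bound on the volume of a subtree with $O(n)$ leaves contained in such a ball balances out to a quantity bounded by $\epsilon$.
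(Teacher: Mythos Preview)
Your shrinking argument contains a fatal arithmetic inconsistency. You ask to subdivide $K_0$ into $n$ arcs $I_1,\dots,I_n$, each of length at most $\epsilon/(4n)$; but then their total length is at most $n\cdot\epsilon/(4n)=\epsilon/4$, whereas it must equal $\mathrm{vol}(K_0)$. Such a subdivision therefore exists only when $\mathrm{vol}(K_0)\le\epsilon/4$, which is exactly what you are trying to prove.

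This is not a matter of adjusting constants: the approach cannot work. If you subdivide $K_0$ into $n$ arcs with maximal length $L$, then $nL\ge\mathrm{vol}(K_0)=:V$ (the maximum is at least the average). After translating each arc so that its midpoint lies near $v$, the ball containing all translates must have radius $R>L/2$, simply to accommodate a single translated arc. Your convex hull $K$ has at most $2n$ leaves and lies in $B(v,R)$, so your own estimate gives $\mathrm{vol}(K)\le(2n+1)R$; but $(2n+1)R>nL\ge V$. The upper bound you obtain is thus never smaller than $\mathrm{vol}(K_0)$. Density of orbits lets you place midpoints near $v$, but it provides no mechanism for forcing the translated arcs to \emph{overlap}, which is what would actually be needed to reduce total volume. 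Contrary to your final paragraph, the hard step is not the existence of $K_0$ but precisely this ``easy'' volume estimate, which in fact fails.

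For comparison, the paper does not argue internally in $T$ at all. It invokes an approximation result from \cite{Hor14-5}: since $T$ has dense orbits, there is a sequence $(T_n)\in\mathcal{O}(G,\mathcal{F})^{\mathbb{N}}$ with $\mathrm{vol}(T_n/G)\to 0$ and $1$-Lipschitz $G$-equivariant maps $f_n:T_n\to T$. A finite supporting subtree $K_n\subseteq T_n$ (a fundamental domain) has volume tending to $0$, and its image $f_n(K_n)$ is a finite supporting subtree of $T$ with $\mathrm{vol}(f_n(K_n))\le\mathrm{vol}(K_n)\to 0$. The shrinking comes from approximating $T$ by simplicial trees of small covolume, not from rearranging pieces of a fixed supporting subtree.
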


\begin{proof}
As $T$ has dense orbits, it follows from \cite[Theorem 5.3]{Hor14-5} that there exists a sequence $(T_n)_{n\in\mathbb{N}}\in\mathcal{O}(G,\mathcal{F})^{\mathbb{N}}$, such that the volume of the quotient graph $T_n/G$ converges to $0$, and for all $n\in\mathbb{N}$, there exists a $1$-Lipschitz $G$-equivariant map $f_n:T_n\to T$. Letting $K_n$ be a finite supporting subtree of $T_n$, with volume converging to $0$ as $n$ goes to $+\infty$, the images $f_n(K_n)$ are finite supporting subtrees of $T$ whose volumes converge to $0$.
\end{proof}

Given a finite system $S=(F,A)$ of partial isometries of a finite forest $F$, we define $m(S)$ as the volume of $F$, and $d(S)$ as the sum of the volumes of the domains of the partial isometries in $A$. We say that $S$ has \emph{independent generators} if no reduced word in the partial isometries in $A$ and their inverses defines a partial isometry of $F$ that fixes a nondegenerate arc. Gaboriau, Levitt and Paulin have shown in \cite[Proposition 6.1]{GLP94} that if $S$ has independent generators, then $m(S)-d(S)\ge 0$. The following proposition is a generalization of \cite[Lemma 3.10]{Rey11-2} to the context of $(G,\mathcal{F})$-trees. 

\begin{prop}\label{stab}
Let $T\in\overline{\mathcal{O}(G,\mathcal{F})}$ be a tree with dense orbits, and let $H$ be a $(G,\mathcal{F})$-free factor. Assume that $H$ acts with dense orbits on its minimal subtree $T_H$ in $T$. Then $\text{Stab}(T_H)=H$, and $\{gT_H|g\in G\}$ is a transverse family in $T$.
\end{prop}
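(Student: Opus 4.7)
The plan is to deduce both statements from the following unified claim: for every $g \in G \setminus H$, the intersection $gT_H \cap T_H$ contains no nondegenerate arc. This is enough: if $g \in \text{Stab}(T_H)$, then $gT_H \cap T_H = T_H$ is nondegenerate, so the claim forces $g \in H$, yielding $\text{Stab}(T_H) = H$. For the transverse family property, if $g_1T_H \cap g_2T_H$ contained more than one point, then so would $(g_2^{-1}g_1)T_H \cap T_H$, forcing $g_2^{-1}g_1 \in H$ and hence $g_1T_H = g_2T_H$.

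To prove the claim, I argue by contradiction: suppose $g \in G \setminus H$ and $gT_H \cap T_H$ contains a nondegenerate arc $I$ of length, say, $2\delta > 0$. The strategy is to produce a system of partial isometries $S = (F, A)$ of a finite forest $F$ which has independent generators (so that $m(S) - d(S) \ge 0$ by Gaboriau--Levitt--Paulin \cite[Proposition 6.1]{GLP94}), yet also satisfies $d(S) > m(S)$, giving the desired contradiction. Since $H$ acts with dense orbits on $T_H$, Lemma \ref{cost-1} lets me choose a finite supporting subtree $K \subseteq T_H$ of volume $m(K)$ as small as I like, and I can arrange that $K$ contains a sub-arc $I_0 \subseteq I$ of length at least $\delta$. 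Since $K$ is supporting for $T_H$, I pick finitely many $h_1, \ldots, h_n \in H$ whose induced partial isometries $\alpha_{h_j}$ on $K$ already generate an action on $K$ faithfully approximating the $H$-action on $T_H$; as $K$ shrinks, the standard cost-volume analysis underlying \cite{GLP94} forces the excess $m(K) - d_H$ (where $d_H := \sum_j \text{vol}(\text{dom}\,\alpha_{h_j})$) to be made arbitrarily small.

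Next I adjoin the partial isometry $\alpha_g$ induced by $g$ on its natural domain inside the enlarged finite forest $F := K \cup gK$; by construction this domain has volume at least $\delta$, so $d(S) \ge d_H + \delta > m(F)$ once $K$ is chosen small enough. The crux is then the \emph{independence of generators} of the enlarged system $(F, \{\alpha_{h_j}\} \cup \{\alpha_g\})$: a nontrivial reduced word in these partial isometries fixing an arc of $F$ would correspond to a nontrivial element of $\langle H, g \rangle \subseteq G$ fixing an arc of $T$. But $T$ has dense orbits, and therefore trivial arc stabilizers by \cite[Proposition 4.17]{Hor14-5}. The fact that $H$ is a \emph{$(G,\mathcal{F})$-free factor} (and not just an arbitrary subgroup) is what ensures that any such reduced word is genuinely nontrivial in $G$: the free-product structure coming from the free factorization of $G$ (in which $H$ appears as a vertex group) prevents any collapse of a word involving $g \notin H$ alternating with nontrivial elements of $H$.

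This contradicts the Gaboriau--Levitt--Paulin inequality $m(S) \ge d(S)$ for systems with independent generators, proving the claim. The main technical obstacle will be carrying out the independence argument rigorously while simultaneously controlling the volume bookkeeping: one must choose the finite set $\{h_j\}$ large enough that $d_H$ is within $\delta/2$ of $m(K)$, yet small enough that the enlarged system remains a genuine finite system of partial isometries on $F = K \cup gK$. This delicate balancing, and the careful use of the free-factor hypothesis to exclude degenerate words, is where the bulk of the work lies; the argument follows the template of Reynolds' \cite[Lemma 3.10]{Rey11-2}, adapted to the $(G,\mathcal{F})$ setting via the machinery developed in Sections \ref{sec-relative} and \ref{sec-o}.
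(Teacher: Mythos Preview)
Your overall strategy matches the paper's: reduce to showing $gT_H\cap T_H$ contains no arc for $g\notin H$, build a finite system of partial isometries violating the Gaboriau--Levitt--Paulin inequality $m(S)\ge d(S)$, and use the free-factor hypothesis together with triviality of arc stabilizers to certify independent generators. However, your concrete system is set up differently from the paper's, and the volume bookkeeping in your version does not go through as stated. You ask that $K$ contain a subarc $I_0\subseteq I$ of length $\ge\delta$ while also making $m(K)-d_H$ arbitrarily small for some finite family $\{\alpha_{h_j}\}$ with independent generators. But ``independent generators'' for the $\alpha_{h_j}$'s means no reduced word in the $h_j$'s is trivial in $H$, i.e.\ the $h_j$'s freely generate a free subgroup; for a non-free factor $H$ (say $H=G_1\ast F'$), there is no reason such a free subgroup should have $d_H$ close to $m(K)$, and indeed GLP gives $d_H\le m(K)$, so your desired inequality $d_H+\delta>m(K\cup gK)$ does not follow. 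The appeal to a ``standard cost-volume analysis'' is where the argument breaks.

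The paper's construction avoids this entirely by exploiting the symmetry between $T_H$ and $gT_H$. It takes the arc $I$ of length $L$ as the large piece of the forest, then applies Lemma~\ref{cost-1} \emph{twice}: once to $(H,\mathcal F_H)\curvearrowright T_H$ to get a tiny supporting tree $F_\epsilon\subseteq T_H$ disjoint from $I$, and once to $(H^g,\mathcal F_{H^g})\curvearrowright gT_H$ to get a tiny $F'_\epsilon\subseteq gT_H$ disjoint from $I$ and $F_\epsilon$. The partial isometries $\phi_i$ (resp.\ $\phi'_j$) map subintervals of $I$ into $F_\epsilon$ (resp.\ $F'_\epsilon$) via elements of $H$ (resp.\ $H^g$), so the domains cover $I$ twice and $d(S)=2L$, while $m(S)\le L+2\epsilon<2L$. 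Because $I,F_\epsilon,F'_\epsilon$ are pairwise disjoint, any reduced word fixing an arc is forced to alternate between $\phi$-pairs and $\phi'$-pairs, hence represents an alternating word in $H$ and $H^g$; since $H$ is a free factor and $g\notin H$, one has $\langle H,H^g\rangle=H\ast H^g$, so this element is nontrivial, contradicting triviality of arc stabilizers. This alternating structure is exactly what your single-$\alpha_g$ setup lacks, and it is what makes both the volume count and the independence argument clean.
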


\begin{proof}
Let $g\in G\smallsetminus H$. Assume towards a contradiction that $gT_H\cap T_H$ contains a nondegenerate arc $I$ of length $L>0$. Let $\epsilon>0$, with $\epsilon<\frac{L}{2}$. Lemma \ref{cost-1} applied to the $(H,\mathcal{F}_H)$-tree $T_H$ ensures the existence of a finite tree $F_{\epsilon}\subseteq T_H$ of volume smaller than $\epsilon$, such that $I$ is covered by finitely many translates of $F_{\epsilon}$, and we can choose $F_{\epsilon}$ to be disjoint from $I$. We can therefore subdivide $I$ into finitely many subsegments $I_1,\dots,I_k$ such that for all $i\in\{1,\dots,k\}$, there exists $g_i\in H$ with $g_iI_i\subseteq F_{\epsilon}$. Similarly, there exists a finite forest $F'_{\epsilon}\subseteq gT_H$ of volume smaller than $\epsilon$, such that $I$ is covered by finitely many translates of $F'_{\epsilon}$, and again we can choose $F'_{\epsilon}$ to be disjoint from both $I$ and $F_{\epsilon}$ in $T$. We similarly have a subdivision $I'_1,\dots, I'_l$ of $I$, and an element $g'_j\in H^g$ for each $j\in\{1,\dots,l\}$, so that $g'_jI'_j\subseteq F'_{\epsilon}$. We build a system of partial isometries $S$ on the forest $I\cup F_{\epsilon}\cup F'_{\epsilon}$, with an isometry $\phi_i$ from $I_i$ to $F_{\epsilon}$ corresponding to the action of $g_i$ for all $i\in\{1,\dots,k\}$, and an isometry $\phi'_j$ from $I'_j$ to $F'_{\epsilon}$ corresponding to the action of $g'_j$ for all $j\in\{1,\dots,l\}$. Then $m(S)\le L+2\epsilon$, while $d(S)=2L$. Therefore $m(S)-d(S)<0$, and hence the system of isometries $S$ does not have independent generators \cite[Proposition 6.1]{GLP94}. This means that there exists a reduced word $w$ in the partial isometries $\phi_i$, $\phi'_j$ and their inverses, associated to an element $g\in G$ which fixes an arc in $T$. It follows from the construction of the system of isometries that up to cyclic conjugation, the word $w$ is a concatenation of $2$-letter words of the form $\phi_{i_1}\circ\phi_{i_2}^{-1}$ and $\phi'_{j_1}\circ{\phi'_{j_2}}^{-1}$, with $i_1\neq i_2$ and $j_1\neq j_2$, and these two types of subwords alternate in $w$. So $g$ is of the form $h_1h_2^g\dots h_{s-1}h_{s}^g$, where $h_i\in H$ is a nontrivial element for all $i\in\{1,\dots,s\}$. Since $H$ is a proper $(G,\mathcal{F})$-free factor, and $g\in G\smallsetminus H$, we have $\langle H,H^g\rangle = H\ast H^g$, so $g\neq e$. This contradicts the fact that $T$ has dense orbits, and hence trivial arc stabilizers \cite[Proposition 4.17]{Hor14-5}. Therefore, for all $g\in G\smallsetminus H$, the intersection $gT_H\cap T_H$ consists in at most one point. This implies that $\text{Stab}(T_H)=H$, and that $\{gT_H\}_{g\in G}$ is a transverse family in $T$. 
\end{proof}

\begin{prop}\label{minimal-transverse}
Let $T\in\overline{\mathcal{O}(G,\mathcal{F})}$ be a tree with dense orbits. Then the collection $\{gT_H|H\in\text{Dyn}(T),g\in G\}$ is a transverse family in $T$.
\end{prop}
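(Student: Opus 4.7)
The plan is to exploit minimality of the conjugacy classes in $\text{Dyn}(T)$, together with Proposition \ref{stab} applied separately to each $H \in \text{Dyn}(T)$. By $G$-invariance of the collection, it suffices to show that if $H_1, H_2 \in \text{Dyn}(T)$ and $g \in G$, then either $T_{H_1}$ and $gT_{H_2}$ are equal or their intersection contains at most one point. Writing $gT_{H_2} = T_{gH_2g^{-1}}$ and replacing $H_2$ by $gH_2g^{-1}$ (which represents the same conjugacy class $[H_2]$), this reduces to analyzing $Y := T_{H_1} \cap T_{H_2}$. If $[H_1] = [H_2]$, the conclusion follows directly from Proposition \ref{stab} applied to $H_1$; so I assume $[H_1] \neq [H_2]$ and argue for a contradiction, supposing $Y$ is nondegenerate.

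Set $K := H_1 \cap H_2$. I would first show that $\{gY : g \in G\}$ is a transverse family in $T$ with $\text{Stab}_G(Y) = K$. If $g \notin H_1$, then Proposition \ref{stab} gives that $gT_{H_1} \cap T_{H_1}$ contains at most one point, and the inclusions $gY \subseteq gT_{H_1}$ and $Y \subseteq T_{H_1}$ force $gY \cap Y$ to contain at most one point; the symmetric argument applies to $g \notin H_2$. Hence transversality holds for all $g \notin K$, and the stabilizer description follows from the same reasoning (an element $g$ stabilizing $Y$ setwise forces $T_{H_i} \cap gT_{H_i}$ to contain $Y$ and so $g \in H_i$ for $i=1,2$).

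Next, $K$ is an intersection of two $(G,\mathcal{F})$-free factors, hence is itself a $(G,\mathcal{F})$-free factor; being contained in the proper factor $H_1$, it has finite Kurosh rank. Applying Lemma \ref{nonsimplicial} to $T$ with this transverse family yields that $K$ acts on $Y$ with dense orbits. This forces $K$ to be nontrivial (otherwise $Y$ would reduce to a point) and nonperipheral (otherwise $K$ would be elliptic, and the projection to $Y$ of its fixed point in $T$ would yield a global $K$-fixed point in $Y$, incompatible with dense orbits on a nondegenerate tree). So $K$ is a proper $(G,\mathcal{F})$-free factor. Since $T_K \subseteq Y$ is closed and $K$-invariant, and dense $K$-orbits in $Y$ must lie in $T_K$, one gets $T_K = Y$; thus $K$ acts with dense orbits on its minimal subtree and does not fix a point in $T$, so $[K] \in \text{Dyn}(T)$. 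The inclusions $K \subseteq H_1$ and $K \subseteq H_2$ combined with minimality of $[H_1]$ and $[H_2]$ in $\text{Dyn}(T)$ then force $[K] = [H_1] = [H_2]$, contradicting the assumption. The main ingredient is the application of Lemma \ref{nonsimplicial}, which converts a nondegenerate overlap into a new dynamical free factor beneath both $H_1$ and $H_2$; the subtle point is verifying that $K$ is nontrivial and nonperipheral, which is handled via the dense-orbit conclusion together with the elementary nearest-point projection argument.
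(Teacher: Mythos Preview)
Your proof is correct and follows essentially the same route as the paper: both arguments show that the nondegenerate intersection $Y=T_{H_1}\cap T_{H_2}$ sits in a transverse family with stabilizer $K=H_1\cap H_2$, apply Lemma~\ref{nonsimplicial} to get that the free factor $K$ acts with dense orbits, and then invoke minimality of the classes in $\text{Dyn}(T)$ to force $[H_1]=[H_2]$. Your write-up is in fact more careful than the paper's in spelling out why $K$ is nontrivial and nonperipheral; the only cosmetic slips are that you assert $[K]\in\text{Dyn}(T)$ before minimality is checked (you only need that $K$ is a dynamical proper free factor) and that $T_K=Y$ should really read that $T_K$ is dense in $Y$, but neither affects the argument.
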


\begin{proof}
Let $H,H'\in\text{Dyn}(T)$, and assume that $T_H\cap T_{H'}$ contains a nondegenerate arc. By Proposition \ref{stab}, since $H$ and $H'$ are proper $(G,\mathcal{F})$-free factors, we have $\text{Stab}(T_H)=H$ and $\text{Stab}(T_{H'})=H'$. The collections $\{gT_H\}_{g\in G}$ and $\{gT_{H'}\}_{g\in G}$ are transverse families in $T$ (Proposition \ref{stab}), hence so is the collection of nondegenerate intersections of the form $gT_H\cap g'T_{H'}$ for $g,g'\in G$. If $g\in G$ stabilizes $T_H\cap T_{H'}$, then $gT_H\cap T_H$ and $gT_{H'}\cap T_{H'}$ both contain a nondegenerate arc, and hence $gT_H=T_H$ and $gT_{H'}=T_{H'}$. So we have $\text{Stab}(T_H\cap T_{H'})=\text{Stab}(T_H)\cap\text{Stab}(T_{H'})=H\cap H'$. By Lemma \ref{nonsimplicial}, the $(G,\mathcal{F})$-free factor $H\cap H'$ acts with dense orbits on the minimal subtree of $T_H\cap T_{H'}$. By minimality of the factors in $\text{Dyn}(T)$, this implies that $H=H'$ and $T_H=T_{H'}$. So $\{gT_H|H\in\text{Dyn}(T),g\in G\}$ is a transverse family in $T$. 
\end{proof}

\begin{proof}[Proof of Proposition \ref{dyn-finite}]
Finiteness of $\text{Dyn}(T)$ for all trees $T\in\overline{P\mathcal{O}(G,\mathcal{F})}$ follows from Proposition \ref{minimal-transverse}, since every transverse family in a tree with dense orbits contains boundedly many orbits of trees (where the bound is given by the number of orbits of directions at branch points in $T$).
\end{proof}

\section{Nonelementary subgroups of $\text{Out}(G,\mathcal{F})$, and a trichotomy for subgroups of $\text{Out}(G,\mathcal{F})$}\label{sec-5}

Let $G$ be a countable group, and let $\mathcal{F}$ be a free factor system of $G$, such that $(G,\mathcal{F})$ is nonsporadic.

\begin{de}
A subgroup $H\subseteq\text{Out}(G,\mathcal{F})$ is \emph{nonelementary} if 

\begin{itemize}
\item it does not preserve any finite set of proper $(G,\mathcal{F})$-free factors, and
\item it does not preserve any finite set of points in $\partial_{\infty} FZ(G,\mathcal{F})$.
\end{itemize}
\end{de}

We now aim at showing that any nonelementary subgroup of $\text{Out}(G,\mathcal{F})$ contains a rank two free subgroup.

\begin{theo}\label{nonelementary}
Let $G$ be a countable group, and let $\mathcal{F}$ be a free factor system of $G$, so that $(G,\mathcal{F})$ is nonsporadic. Then any nonelementary subgroup of $\text{Out}(G,\mathcal{F})$ contains a free subgroup of rank two, generated by two loxodromic isometries of $FZ(G,\mathcal{F})$. 
\end{theo}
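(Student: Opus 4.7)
The plan is to apply Gromov's Theorem \ref{Gromov} to the action of $H$ on $FZ(G,\mathcal{F})$. The second bullet in the definition of nonelementary gives directly that $H$ has no finite orbit in $\partial_{\infty} FZ(G,\mathcal{F})$, so the whole task reduces to exhibiting a boundary accumulation point for an $H$-orbit in $FZ(G,\mathcal{F})$; equivalently, to showing that $\Lambda_{FZ(G,\mathcal{F})}H\neq\emptyset$.

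To produce such an accumulation point I will pass to random walks and stationary measures. Choose a probability measure $\mu$ on $\text{Out}(G,\mathcal{F})$ whose support generates $H$ as a semigroup (this is possible because $H$ is countable). Compactness of $\overline{P\mathcal{O}(G,\mathcal{F})}$ together with a standard Markov--Kakutani argument yields a $\mu$-stationary Borel probability measure $\nu$ on $\overline{P\mathcal{O}(G,\mathcal{F})}$. The crucial claim is that $\nu$ gives full measure to the subspace $P\mathcal{X}(G,\mathcal{F})$ of $\mathcal{Z}$-averse trees. Granting this, the $\text{Out}(G,\mathcal{F})$-equivariant map $\partial\psi$ of Theorem \ref{boundary-fz} pushes $\nu$ forward to a $\mu$-stationary probability measure $\overline{\nu}$ on $\partial_{\infty} FZ(G,\mathcal{F})$; a standard stationarity argument then forces the closure of every $H$-orbit in $FZ(G,\mathcal{F})\cup\partial_{\infty} FZ(G,\mathcal{F})$ to meet the support of $\overline{\nu}$, so that $\Lambda_{FZ(G,\mathcal{F})}H\neq\emptyset$ and Theorem \ref{Gromov} concludes.

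The heart of the proof is therefore the measure-rigidity claim that any $\mu$-stationary probability measure $\nu$ on $\overline{P\mathcal{O}(G,\mathcal{F})}$ is concentrated on $P\mathcal{X}(G,\mathcal{F})$. I plan to partition $\overline{P\mathcal{O}(G,\mathcal{F})}$ according to the four-way classification of Proposition \ref{classification}. Arational trees already lie in $P\mathcal{X}(G,\mathcal{F})$ by Proposition \ref{ATX}, so it suffices to rule out $\nu$-mass on the three remaining strata. Mass on the open stratum $P\mathcal{O}(G,\mathcal{F})$ is excluded using the standard properness-type behavior of the $\text{Out}(G,\mathcal{F})$-action on outer space combined with nonelementarity of $H$ (a recurrence argument produces a finite-orbit situation that clashes with the first bullet of the definition). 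For the two remaining reducible classes, Proposition \ref{dyn-finite} and Proposition \ref{per-finite} supply, respectively, a finite $\text{Out}(G,\mathcal{F}^{(t)})$-equivariant Borel assignment $T\mapsto\text{Dyn}(T)$ and a finite $\text{Out}(G,\mathcal{F}^{(t)})$-equivariant Borel assignment $T\mapsto\{\text{Fill}(\text{Stab}(v)):[v]\in\text{Ell}(T)\}$, each valued in the countable set $\mathcal{P}$ of finite collections of proper $(G,\mathcal{F})$-free factor conjugacy classes. If $\nu$ had positive mass on one of these strata, renormalizing and pushing forward produces a $\mu$-stationary probability measure on $\mathcal{P}$; since $\mathcal{P}$ is countable, such a measure is supported on a finite $H$-invariant subset, which in turn yields a finite $H$-invariant collection of proper $(G,\mathcal{F})$-free factor conjugacy classes, contradicting the first bullet of nonelementarity.

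The main obstacle I expect is exactly this measure-rigidity step, and within it the careful handling of the Borel structure: one must check that the assignments $T\mapsto\text{Dyn}(T)$ and $T\mapsto\text{Fill}(\text{Ell}(T))$ are Borel-measurable in the axes topology, and that the passage from a $\mu$-stationary probability measure on the countable discrete space $\mathcal{P}$ to an honest finite $H$-invariant subfamily of free factors can be made rigorous (the standard trick is to look at atoms of maximal mass, or a barycenter in the convex hull of the support). Once these points are settled, the pushforward via $\partial\psi$ and the application of Theorem \ref{Gromov} are essentially formal.
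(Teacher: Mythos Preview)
Your proposal is correct and follows essentially the same route as the paper: build a $\mu$-stationary measure on $\overline{P\mathcal{O}(G,\mathcal{F})}$, show via the equivariant finite-free-factor assignments coming from Proposition~\ref{classification} that it is concentrated on $P\mathcal{AT}(G,\mathcal{F})\subseteq P\mathcal{X}(G,\mathcal{F})$, push forward via $\partial\psi$, and conclude with Theorem~\ref{Gromov}. The only packaging difference is that the paper bundles all non-arational strata into a single measurable equivariant map $\Theta$ and invokes Ballmann's maximum-principle lemma (Lemma~\ref{disjoint-translations}) once; in particular it handles the open stratum $P\mathcal{O}(G,\mathcal{F})$ via $T\mapsto\text{Red}(T)$ (the proper free factors arising as vertex groups of collapses of $T$, nonempty by nonsporadicity) rather than by a properness/recurrence argument.
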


As a consequence of Theorem \ref{nonelementary} and of our description of the Gromov boundary of $\partial_{\infty}FZ(G,\mathcal{F})$, we get the following trichotomy for subgroups of $\text{Out}(G,\mathcal{F})$.

\begin{theo}\label{trichotomy}
Let $G$ be a countable group, and let $\mathcal{F}$ be a free factor system of $G$, so that $(G,\mathcal{F})$ is nonsporadic. Then every subgroup of $\text{Out}(G,\mathcal{F})$ either 
\begin{itemize}
\item contains a rank two free subgroup generated by two loxodromic isometries of $FZ(G,\mathcal{F})$, or
\item virtually fixes a tree with trivial arc stabilizers in $\partial P\mathcal{O}(G,\mathcal{F})$, or
\item virtually fixes the conjugacy class of a proper $(G,\mathcal{F})$-free factor.
\end{itemize}
\end{theo}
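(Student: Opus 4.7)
The plan is to play off Theorem \ref{nonelementary} against the description of $\partial_{\infty} FZ(G,\mathcal{F})$ provided by Theorem \ref{boundary-fz}. Let $H\subseteq\text{Out}(G,\mathcal{F})$. If $H$ is nonelementary in the sense introduced just before Theorem \ref{nonelementary}, then Theorem \ref{nonelementary} immediately yields the free subgroup required by the first alternative of the trichotomy. So I may assume $H$ is elementary: either $H$ virtually preserves a finite set of conjugacy classes of proper $(G,\mathcal{F})$-free factors, or $H$ virtually preserves a finite set of points in $\partial_{\infty} FZ(G,\mathcal{F})$. In the first elementary subcase, passing to the kernel of the induced finite permutation action gives a finite-index subgroup of $H$ fixing a conjugacy class of proper $(G,\mathcal{F})$-free factor, which is the third alternative.

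In the remaining subcase I pass to a finite-index subgroup and may assume that $H$ fixes some point $\xi\in\partial_{\infty} FZ(G,\mathcal{F})$. By Theorem \ref{boundary-fz}, write $\xi=\partial\psi([T])$ for some $\mathcal{Z}$-averse tree $T\in\mathcal{X}(G,\mathcal{F})$, and let $\overline{T}$ be the mixing representative of its $\sim$-class (Proposition \ref{mixing-representative}), unique up to $G$-equivariant weak homeomorphism. The $H$-fixation of $\xi$ then translates into $H$-invariance of the $G$-equivariant weak homeomorphism class of $\overline{T}$, and hence of the $\text{Out}(G,\mathcal{F})$-equivariant data attached to $\overline{T}$ — in particular the finite set $\text{Dyn}(\overline{T})$ (finite by Proposition \ref{dyn-finite}) and the finite set $\text{Ell}(\overline{T})$ (finite by Proposition \ref{per-finite}), as well as the projective class $[\overline{T}]$.

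Next I apply Proposition \ref{classification} to $\overline{T}$. Since $\overline{T}$ is $\mathcal{Z}$-averse, it cannot lie in $\mathcal{O}(G,\mathcal{F})$ (every Grushko tree is itself a $\mathcal{Z}$-splitting and so is $\mathcal{Z}$-compatible), so $\overline{T}$ falls into one of the last three cases of Proposition \ref{classification}. If $\overline{T}$ is arational, then by Lemma \ref{dense} no nonperipheral subgroup contained in a proper free factor can fix a point of $\overline{T}$; combined with very smallness, this forces every arc stabilizer of $\overline{T}$ to be trivial (a nontrivial arc stabilizer would be cyclic and nonperipheral, and thus contained in a proper $(G,\mathcal{F})$-free factor while fixing a point). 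Consequently $[\overline{T}]\in\partial P\mathcal{O}(G,\mathcal{F})$ is a virtually $H$-fixed tree with trivial arc stabilizers, giving the second alternative. If $\overline{T}$ admits a dynamical proper $(G,\mathcal{F})$-free factor, then $\text{Dyn}(\overline{T})$ is a nonempty finite $H$-invariant set of conjugacy classes of proper free factors, so a finite-index subgroup of $H$ fixes one of them, yielding the third alternative. Finally, if $\overline{T}$ has a nonperipheral point stabilizer contained in some proper $(G,\mathcal{F})$-free factor, the finite $H$-invariant set obtained by applying $\text{Fill}$ to the (nonperipheral) classes in $\text{Ell}(\overline{T})$ consists of conjugacy classes of proper $(G,\mathcal{F})$-free factors, and again $H$ virtually fixes one of them.

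The most delicate step I expect is the arational subcase: one must upgrade the $H$-invariance of the $\sim$-class of $\overline{T}$ into virtual $H$-invariance of an actual projective tree in $\partial P\mathcal{O}(G,\mathcal{F})$. This relies on the rigidity embodied in Proposition \ref{mixing-representative}, together with the fact that an arational tree is its own mixing representative and that distinct arational trees (up to projective rescaling) represent distinct $\sim$-classes, so that the $\sim$-class of $\overline{T}$ essentially reduces to the single projective tree $[\overline{T}]$. The other branches are more routine: they follow by combining Proposition \ref{classification} with the finiteness statements of Propositions \ref{per-finite} and \ref{dyn-finite} and the equivariance formulas $\Phi\,\text{Dyn}(T)=\text{Dyn}(\Phi T)$ and $\Phi\,\text{Fill}(\text{Ell}(T))=\text{Fill}(\text{Ell}(\Phi T))$ recorded just before Proposition \ref{dyn-finite}.
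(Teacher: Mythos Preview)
Your argument has a genuine gap, precisely at the point you yourself flag as ``most delicate.'' From $H$ fixing $\xi\in\partial_\infty FZ(G,\mathcal{F})$ you correctly deduce that $H$ preserves the weak homeomorphism class of the mixing representative $\overline{T}$ (Proposition \ref{mixing-representative}). But you then assert that this forces $H$ to preserve the projective class $[\overline{T}]$, justifying this by the claim that ``distinct arational trees (up to projective rescaling) represent distinct $\sim$-classes.'' That claim is false in general: a mixing tree can carry a nontrivial simplex of invariant length measures (non-unique ergodicity already occurs for arational trees in the free group case), so the $\sim$-class of $\overline{T}$ corresponds not to a single point but to a simplex in $\overline{P\mathcal{O}(G,\mathcal{F})}$. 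The group $H$ acts on this simplex, and there is no a priori fixed projective tree.

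The paper closes this gap differently and more directly, bypassing your case analysis via Proposition \ref{classification} entirely. Once $H$ (virtually) fixes $\xi$, the paper observes that $H$ preserves the simplex of projective length measures on the mixing representative, and invokes Guirardel's finite-dimensionality result \cite[Corollary 5.4]{Gui00} (valid here because $\overline{P\mathcal{O}(G,\mathcal{F})}$ has finite topological dimension). A finite-dimensional simplex has finitely many extremal points, so $H$ virtually fixes one of them; any such extremal point is a mixing tree, hence has dense orbits and therefore trivial arc stabilizers. This yields the second alternative directly, without ever asking whether $\overline{T}$ is arational. Your detour through $\text{Dyn}(\overline{T})$ and $\text{Fill}(\text{Ell}(\overline{T}))$ in the non-arational subcases is correct and does land you in the third alternative, but it is unnecessary: the simplex argument handles all boundary points uniformly.
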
 

\begin{proof}
Let $H$ be a subgroup of $\text{Out}(G,\mathcal{F})$. If $H$ is nonelementary, Theorem \ref{trichotomy} follows from Theorem \ref{nonelementary}. Otherwise, either $H$ virtually fixes the conjugacy class of a proper $(G,\mathcal{F})$-free factor, or $H$ virtually fixes a point $\xi\in\partial_{\infty}FZ(G,\mathcal{F})$. In the latter case, the group $H$ preserves the simplex of length measures in $\overline{P\mathcal{O}(G,\mathcal{F})}$ corresponding to a mixing representative of $\xi$, provided by Proposition \ref{mixing-representative}, and this simplex has finite dimension by \cite[Corollary 5.4]{Gui00} (the extension of Guirardel's result concerning finite dimensionality of this simplex to the case of free products is made possible by the fact that $\overline{P\mathcal{O}(G,\mathcal{F})}$ has finite topological dimension \cite[Theorem 0.2]{Hor14-5}). So $H$ virtually fixes any extremal point of this simplex, which is a tree with trivial arc stabilizers. 
\end{proof}

Our proof of Theorem \ref{nonelementary} uses techniques coming from the theory of random walks on groups. These were already used in \cite{Hor14-3} for giving a new proof of a result of Handel and Mosher \cite{HM09}, which establishes a dichotomy for subgroups of $\text{Out}(F_N)$, namely: every subgroup of $\text{Out}(F_N)$ (finitely generated or not) either contains a fully irreducible automorphism, or virtually fixes the conjugacy class of a proper free factor of $F_N$. All topological spaces will be equipped with their Borel $\sigma$-algebra. Let $\mu$ be a probability measure on $\text{Out}(G,\mathcal{F})$. A probability measure $\nu$ on $\overline{P\mathcal{O}(G,\mathcal{F})}$ is \emph{$\mu$-stationary} if $\mu\ast\nu=\nu$, i.e. for all $\nu$-measurable subsets $E\subseteq \overline{P\mathcal{O}(G,\mathcal{F})}$, we have

\begin{displaymath}
\nu(E)=\sum_{\Phi\in\text{Out}(G,\mathcal{F})}\mu(\Phi)\nu(\Phi^{-1}E).
\end{displaymath} 

\noindent We denote by $P\mathcal{AT}(G,\mathcal{F})$ the image of $\mathcal{AT}(G,\mathcal{F})$ in $\overline{P\mathcal{O}(G,\mathcal{F})}$. Our first goal will be to show that given a probability measure $\mu$ on $\text{Out}(G,\mathcal{F})$, any $\mu$-stationary measure on $\overline{P\mathcal{O}(G,\mathcal{F})}$ is supported on $P\mathcal{AT}(G,\mathcal{F})$. Since $\mathcal{AT}(G,\mathcal{F})\subseteq\mathcal{X}(G,\mathcal{F})$ (Proposition \ref{ATX}), it follows that any $\mu$-stationary measure on $\overline{P\mathcal{O}(G,\mathcal{F})}$ pushes to a $\mu$-stationary measure on $\partial_{\infty} FZ(G,\mathcal{F})$ via the map $\partial\psi$ provided by Theorem \ref{boundary-fz} (this map factors through $\overline{P\mathcal{O}(G,\mathcal{F})}$). We will make use of the following classical lemma, whose proof is based on a maximum principle argument. The following version of the statement appears in \cite[Lemma 3.3]{Hor14-3}. We denote by $gr(\mu)$ the subgroup of $\text{Out}(G,\mathcal{F})$ generated by the support of the measure $\mu$.

\begin{lemma} \label{disjoint-translations} (Ballmann \cite{Bal89})
Let $\mu$ be a probability measure on a countable group $G$, and let $\nu$ be a $\mu$-stationary probability measure on a $G$-space $X$. Let $D$ be a countable $G$-set, and let $\Theta:X\to D$ be a measurable $G$-equivariant map. If $E\subseteq X$ is a $G$-invariant measurable subset of $X$ satisfying $\nu(E)>0$, then $\Theta(E)$ contains a finite $gr(\mu)$-orbit.
\end{lemma}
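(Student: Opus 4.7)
The plan is to push the problem down to the countable set $D$ and then invoke a maximum principle on atoms. First, I would observe that since $E\subseteq X$ is $G$-invariant and $\nu$ is $\mu$-stationary, the restriction $\nu|_E$ is also $\mu$-stationary (any $\Phi^{-1}$ preserves $E$, so stationarity for measurable $A\subseteq E$ follows from stationarity for $A$ viewed as a subset of $X$). In particular $\nu(E)=\sum_{\Phi}\mu(\Phi)\nu(\Phi^{-1}E)=\nu(E)$, so I can normalize to get a $\mu$-stationary probability measure on $E$. Pushing forward via $\Theta$ and using $G$-equivariance of $\Theta$, I obtain a $\mu$-stationary probability measure $\overline{\nu}$ on the countable $G$-set $D$, supported on $\Theta(E)$.

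Next, since $D$ is countable and $\overline{\nu}$ is a probability measure, the quantity $M:=\sup_{d\in D}\overline{\nu}(\{d\})$ is positive and attained. Let $D_{\max}:=\{d\in D : \overline{\nu}(\{d\})=M\}$. Stationarity of $\overline{\nu}$ gives, for every $d\in D$,
\[
\overline{\nu}(\{d\})=\sum_{\Phi\in\mathrm{Out}(G,\mathcal{F})}\mu(\Phi)\,\overline{\nu}(\{\Phi^{-1}d\}).
\]
If $d\in D_{\max}$, the left-hand side equals $M$, while each term on the right is bounded by $\mu(\Phi)M$; since the $\mu(\Phi)$ sum to $1$, equality forces $\overline{\nu}(\{\Phi^{-1}d\})=M$ for every $\Phi$ in the support of $\mu$. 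Thus $D_{\max}$ is invariant under the support of $\mu$, and hence under the entire subgroup $gr(\mu)$ it generates.

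Finally, since $\overline{\nu}$ has total mass $1$ and each atom in $D_{\max}$ has mass $M>0$, the set $D_{\max}$ is finite (with at most $\lfloor 1/M\rfloor$ elements). It therefore decomposes into finitely many finite $gr(\mu)$-orbits, each of which is contained in $\mathrm{supp}(\overline{\nu})\subseteq\Theta(E)$. Any such orbit provides the required finite $gr(\mu)$-orbit in $\Theta(E)$.

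The proof is essentially just the classical maximum principle on atoms of a stationary measure, and there is no real obstacle; the only point to verify carefully is that the push-forward $\overline{\nu}$ is genuinely $\mu$-stationary on $D$, which relies crucially on the hypotheses that $E$ is $G$-invariant and $\Theta$ is $G$-equivariant.
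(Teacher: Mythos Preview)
Your argument is correct and is exactly the maximum principle argument the paper alludes to (the paper itself gives no proof, only citing Ballmann and \cite{Hor14-3}). Two cosmetic points: in the displayed stationarity identity the sum should range over $\Phi\in G$ rather than $\mathrm{Out}(G,\mathcal{F})$; and the identity only yields $\Phi^{-1}D_{\max}\subseteq D_{\max}$ for $\Phi\in\mathrm{supp}(\mu)$, so you should invoke the finiteness of $D_{\max}$ \emph{before} concluding full $gr(\mu)$-invariance (a self-injection of a finite set being a bijection).
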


We now define a $G$-equivariant map $\Theta$ from $\overline{P\mathcal{O}(G,\mathcal{F})}$ to the (countable) set $D$ of finite collections of conjugacy classes of proper $(G,\mathcal{F})$-free factors. Given a tree $T\in{P\mathcal{O}(G,\mathcal{F})}$, we define $\text{Red}(T)$ to be the finite collection of proper $(G,\mathcal{F})$-free factors that occur as vertex groups of trees obtained by equivariantly collapsing some of the edges of $T$ to points. The collection $\text{Red}(T)$ is nonempty because $(G,\mathcal{F})$ is nonsporadic. Given $T\in\partial P\mathcal{O}(G,\mathcal{F})$, the set of conjugacy classes of point stabilizers in $T$ is finite \cite{Jia91}. Every point stabilizer $G_v$ is contained in a unique minimal (possibly non proper) $(G,\mathcal{F})$-free factor $\text{Fill}(G_v)$. We let $\text{Per}(T)$ be the (possibly empty) finite set of conjugacy classes of proper $(G,\mathcal{F})$-free factors that arise in this way, and we set

\begin{displaymath}
\Theta(T):=\left\{
\begin{array}{ll}
\emptyset &\text{~if~} T\in P\mathcal{AT}(G,\mathcal{F})\\
\text{Red}(T) &\text{~if~} T\in P\mathcal{O}(G,\mathcal{F})\\
\text{Dyn}(T)\cup\text{Per}(T) &\text{~if~} T\in\partial P\mathcal{O}(G,\mathcal{F})\smallsetminus P\mathcal{AT}(G,\mathcal{F})
\end{array}\right..
\end{displaymath} 

\noindent Proposition \ref{classification} implies that $\Theta(T)=\emptyset$ if and only if $T\in P\mathcal{AT}(G,\mathcal{F})$. The following lemma was proved in \cite[Lemma 3.4]{Hor14-3}. Its proof adapts to the context of $(G,\mathcal{F})$-trees.

\begin{lemma}\label{Theta-measurable}
The set $P\mathcal{AT}(G,\mathcal{F})$ is measurable, and $\Theta$ is measurable. 
\qed
\end{lemma}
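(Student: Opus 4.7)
The plan is to adapt the proof of the analogous statement \cite[Lemma 3.4]{Hor14-3} for the free group case. Three ingredients are needed: countability of the collection of proper $(G,\mathcal{F})$-free factors, continuity of the translation length maps $T \mapsto ||g||_T$, and the classification result Proposition \ref{classification}. Countability follows because any proper $(G,\mathcal{F})$-free factor has finite Kurosh rank, hence is determined up to conjugacy by finite data extracted from the countable group $G$ (a subset of $\mathcal{F}$, finitely many conjugating elements, and a finitely generated free subgroup). Continuity is immediate from the Culler--Morgan embedding defining the axes topology, so for each $g\in G$ the set $\{T : ||g||_T=0\}$ is closed in $\overline{\mathcal{O}(G,\mathcal{F})}$, and descends to a closed subset of $\overline{P\mathcal{O}(G,\mathcal{F})}$.

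To show $P\mathcal{AT}(G,\mathcal{F})$ is Borel, I would first observe that $\partial P\mathcal{O}(G,\mathcal{F})$ is closed in $\overline{P\mathcal{O}(G,\mathcal{F})}$ (since $P\mathcal{O}(G,\mathcal{F})$ is an open subspace of its closure). By Proposition \ref{classification}, the complement of $P\mathcal{AT}(G,\mathcal{F})$ in $\partial P\mathcal{O}(G,\mathcal{F})$ decomposes as $E_1\cup E_2$, where $E_2$ is the set of trees carrying a nonperipheral elliptic subgroup contained in a proper $(G,\mathcal{F})$-free factor, and $E_1$ is the set of trees possessing a dynamical proper free factor. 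The set $E_2$ is the countable union, over nonperipheral $g\in G$ lying in some proper free factor, of the closed sets $\{T : ||g||_T=0\}$, hence is $F_\sigma$. For $E_1$, I would express for each proper free factor $H$ the condition ``$H$ is a dynamical free factor of $T$'' as the conjunction of $H$ not being elliptic (an open condition in the translation length data) together with the condition that $H$ acts on $T_H$ with dense orbits. The dense-orbit condition should admit a Borel description using that dense-orbit actions produce hyperbolic elements of arbitrarily small translation length, while Grushko actions have a positive infimum of hyperbolic translation lengths; concretely the condition becomes a countable combination of open and closed sets involving ratios $||h||_T/||h_0||_T$ for $h,h_0\in H$. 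Taking the countable union over proper free factors yields $E_1$ as a Borel set.

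For measurability of $\Theta$, since the codomain is countable it suffices to verify that $\Theta^{-1}(\mathcal{S})$ is Borel for each finite collection $\mathcal{S}$ of conjugacy classes of proper $(G,\mathcal{F})$-free factors, and this splits into three cases matching the definition of $\Theta$. On $P\mathcal{O}(G,\mathcal{F})$, the collection $\text{Red}(T)$ is determined by the open simplex of outer space containing $T$, giving a locally constant, hence Borel, map. On $P\mathcal{AT}(G,\mathcal{F})$, $\Theta$ is constantly $\emptyset$. On $\partial P\mathcal{O}(G,\mathcal{F})\setminus P\mathcal{AT}(G,\mathcal{F})$, the set $\text{Per}(T)$ is read off from the collection of elliptic subgroups (countably many translation length conditions, together with the $\text{Fill}$ operation which is combinatorially determined by the peripheral structure), and $\text{Dyn}(T)$ is captured by the dense-orbit characterization described above, together with the minimality condition on free factors. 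The main technical obstacle will be giving a clean Borel description of the dense-orbit condition, since $T_H$ may in principle mix simplicial and dense-orbit parts in its Levitt decomposition, so the translation length inequalities isolating the purely dense-orbit regime (and picking out the minimal such factors for $\text{Dyn}$) must be written out carefully.
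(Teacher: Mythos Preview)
Your proposal is essentially the same approach as the paper's: the paper gives no proof beyond citing \cite[Lemma~3.4]{Hor14-3} and asserting that the argument adapts, and you are proposing precisely that adaptation, with a reasonable outline of the ingredients (countability of proper $(G,\mathcal{F})$-free factors, continuity of length functions, and Proposition~\ref{classification}). Your identification of the dense-orbit condition for a free factor $H$ as the main technical point is accurate; note that since $H$ has finite Kurosh rank, the Levitt decomposition (Proposition~\ref{Levitt}) applies to $T_H$, so ``$H$ acts with dense orbits on $T_H$'' is equivalent to the projective condition that hyperbolic elements of $H$ have translation lengths accumulating at $0$, which is expressible as a countable Boolean combination of translation-length inequalities and hence Borel.
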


\begin{prop}\label{stationary}
Let $G$ be a countable group, and $\mathcal{F}$ be a free factor system of $G$. Let $\mu$ be a probability measure on $\text{Out}(G,\mathcal{F})$, whose support generates a nonelementary subgroup of $\text{Out}(G,\mathcal{F})$. Then every $\mu$-stationary measure on $\overline{P\mathcal{O}(G,\mathcal{F})}$ is concentrated on $P\mathcal{AT}(G,\mathcal{F})$.
\end{prop}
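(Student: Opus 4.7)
The plan is to use Ballmann's maximum principle (Lemma \ref{disjoint-translations}) together with the $\text{Out}(G,\mathcal{F})$-equivariant map
$$\Theta:\overline{P\mathcal{O}(G,\mathcal{F})}\to D$$
into the countable set $D$ of finite collections of conjugacy classes of proper $(G,\mathcal{F})$-free factors, which was constructed just before the statement. Recall that by Proposition \ref{classification}, the preimage $\Theta^{-1}(\emptyset)$ coincides with $P\mathcal{AT}(G,\mathcal{F})$.

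First I would set $E:=\overline{P\mathcal{O}(G,\mathcal{F})}\smallsetminus P\mathcal{AT}(G,\mathcal{F})$ and check that $E$ satisfies the hypotheses of Lemma \ref{disjoint-translations}. Measurability of $E$ is exactly Lemma \ref{Theta-measurable}, and $E$ is $\text{Out}(G,\mathcal{F})$-invariant because the notion of an arational $(G,\mathcal{F})$-tree (and hence $\mathcal{AT}(G,\mathcal{F})$) is defined only in terms of the $(G,\mathcal{F})$-free factor structure, which is preserved by any element of $\text{Out}(G,\mathcal{F})$.

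Next I would argue by contradiction: assume $\nu(E)>0$. Applying Lemma \ref{disjoint-translations} to the $\text{Out}(G,\mathcal{F})$-equivariant measurable map $\Theta$ yields a finite $gr(\mu)$-orbit inside $\Theta(E)$. For every $T\in E$, Proposition \ref{classification} ensures that $\Theta(T)$ is a \emph{nonempty} finite collection of conjugacy classes of proper $(G,\mathcal{F})$-free factors (finiteness of $\text{Dyn}(T)$ is Proposition \ref{dyn-finite}, and finiteness of $\text{Per}(T)$ follows from Proposition \ref{per-finite}). A finite $gr(\mu)$-orbit in $\Theta(E)$ thus consists of finitely many finite sets of conjugacy classes of proper $(G,\mathcal{F})$-free factors; taking their union produces a single finite set of conjugacy classes of proper $(G,\mathcal{F})$-free factors that is invariant under $gr(\mu)$. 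This directly contradicts the first condition in the definition of a nonelementary subgroup, which $gr(\mu)$ satisfies by hypothesis. Hence $\nu(E)=0$, i.e.\ $\nu$ is concentrated on $P\mathcal{AT}(G,\mathcal{F})$.

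The conceptual work of the proof is essentially already encoded in the definition of $\Theta$ and in Proposition \ref{classification}, which guarantee that anything outside $P\mathcal{AT}(G,\mathcal{F})$ canonically determines a nonempty finite set of proper $(G,\mathcal{F})$-free factors; after this, the argument is a clean application of the maximum principle for stationary measures, and the only real subtlety is to make sure that the target set $D$ is countable, which is clear because there are countably many conjugacy classes of finitely generated subgroups of the countable group $G$, and each proper $(G,\mathcal{F})$-free factor has finite Kurosh rank and is therefore determined up to conjugacy by countably many data. I do not foresee a genuine obstacle beyond correctly invoking the already-established structural results.
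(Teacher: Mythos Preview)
Your proposal is correct and follows essentially the same approach as the paper's proof: both apply Ballmann's lemma (Lemma \ref{disjoint-translations}) with the equivariant map $\Theta$ to the complement $E$ of $P\mathcal{AT}(G,\mathcal{F})$, and use nonelementarity to rule out a finite $gr(\mu)$-orbit among the nonempty elements of $D$. The only minor slip is your attribution of the finiteness of $\text{Per}(T)$ to Proposition \ref{per-finite} (which assumes trivial arc stabilizers); the paper instead cites Jiang for general $T\in\partial P\mathcal{O}(G,\mathcal{F})$, but this does not affect the argument since $\Theta$ is already defined as a map into $D$.
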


\begin{proof}[Proof of Proposition \ref{stationary}]
Let $\nu$ be a $\mu$-stationary measure on $\overline{P\mathcal{O}(G,\mathcal{F})}$. Let $E:=\overline{P\mathcal{O}(G,\mathcal{F})}\smallsetminus P\mathcal{AT}(G,\mathcal{F})$. By Proposition \ref{classification}, the image $\Theta(E)$ does not contain the empty set. However, nonelementarity of $gr(\mu)$ implies that the only finite $gr(\mu)$-orbit in $D$ is the orbit of the empty set. Lemma \ref{disjoint-translations} thus implies that $\nu(E)=0$, or in other words $\nu$ is concentrated on $P\mathcal{AT}(G,\mathcal{F})$.
\end{proof}

\begin{cor}\label{nonempty-limit-set}
Let $H\subseteq\text{Out}(G,\mathcal{F})$ be a nonelementary subgroup of $\text{Out}(G,\mathcal{F})$. Then the $H$-orbit of any point $x_0\in P\mathcal{O}(G,\mathcal{F})$ has a limit point in $P\mathcal{AT}(G,\mathcal{F})$.
\end{cor}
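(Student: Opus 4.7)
The plan is to deduce Corollary \ref{nonempty-limit-set} from Proposition \ref{stationary} by an averaging argument that produces a $\mu$-stationary measure whose support lies in the closure of the $H$-orbit of $x_0$.

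First, I would choose a probability measure $\mu$ on $\text{Out}(G,\mathcal{F})$ whose support is exactly $H$ (or any symmetric generating subset of $H$), for instance by fixing an enumeration $\Phi_1,\Phi_2,\dots$ of $H$ and setting $\mu(\Phi_n) = 2^{-n}$. Then $gr(\mu) = H$ is nonelementary by hypothesis, so Proposition \ref{stationary} applies to any $\mu$-stationary probability measure on $\overline{P\mathcal{O}(G,\mathcal{F})}$.

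Next, I would construct such a stationary measure whose support is controlled. Consider the Cesàro averages
\[
\nu_n := \frac{1}{n}\sum_{k=0}^{n-1} \mu^{\ast k} \ast \delta_{x_0},
\]
which are probability measures on $\overline{P\mathcal{O}(G,\mathcal{F})}$. Since each $\mu^{\ast k} \ast \delta_{x_0}$ is a countable convex combination of Dirac masses at points of the form $\Phi x_0$ with $\Phi \in H$, the support of every $\nu_n$ is contained in the $H$-orbit $H\cdot x_0$. By compactness of $\overline{P\mathcal{O}(G,\mathcal{F})}$ (recalled in Section \ref{sec-o}), some subsequence $(\nu_{n_j})$ converges in the weak-$\ast$ topology to a probability measure $\nu$, and the standard telescoping computation $\mu \ast \nu_n - \nu_n = \frac{1}{n}(\mu^{\ast n}\ast\delta_{x_0} - \delta_{x_0})$ shows that $\nu$ is $\mu$-stationary. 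Because weak-$\ast$ limits are supported on the closure of the union of the supports of the $\nu_{n_j}$, the support of $\nu$ is contained in $\overline{H \cdot x_0}$.

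Finally, I would apply Proposition \ref{stationary} to $\nu$: this measure is concentrated on $P\mathcal{AT}(G,\mathcal{F})$. In particular $\nu(P\mathcal{AT}(G,\mathcal{F})) = 1 > 0$, so the support of $\nu$ meets $P\mathcal{AT}(G,\mathcal{F})$. Combined with the containment $\mathrm{supp}(\nu) \subseteq \overline{H \cdot x_0}$, this produces a point of $P\mathcal{AT}(G,\mathcal{F})$ in the closure of the $H$-orbit of $x_0$, which is precisely a limit point of $H\cdot x_0$ in $P\mathcal{AT}(G,\mathcal{F})$ (note that $x_0 \in P\mathcal{O}(G,\mathcal{F})$ whereas $P\mathcal{AT}(G,\mathcal{F}) \subseteq \partial P\mathcal{O}(G,\mathcal{F})$, so such a point is genuinely a limit point, not $x_0$ itself). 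The only mild subtlety is checking that $P\mathcal{AT}(G,\mathcal{F})$ is measurable so that the value $\nu(P\mathcal{AT}(G,\mathcal{F}))$ makes sense, but this is exactly Lemma \ref{Theta-measurable}; no further obstacle is expected.
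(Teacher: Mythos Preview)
Your argument is correct and follows essentially the same route as the paper: choose a probability measure $\mu$ with $gr(\mu)=H$, take a weak-$\ast$ limit of Cesàro averages of $\mu^{\ast k}\ast\delta_{x_0}$ to obtain a $\mu$-stationary measure supported in $\overline{H\cdot x_0}$, and invoke Proposition~\ref{stationary} to conclude. The paper cites \cite[Lemma 2.2.1]{KM96} for the existence and stationarity of the limit measure, whereas you spell out the telescoping computation, but the content is the same.
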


\begin{proof}
Let $\mu$ be a probability measure on $\text{Out}(G,\mathcal{F})$ such that $\text{gr}(\mu)=H$. An example of such a measure is obtained by giving a positive weight $\mu(h)>0$ to every element $h\in H$, in such a way that $$\sum_{h\in H}\mu(h)=1$$ (and $\mu(g)=0$ if $g\in G\smallsetminus H$). Let $\delta_{x_0}$ be the Dirac measure at $x_0$. Since $\overline{P\mathcal{O}(G,\mathcal{F})}$ is compact \cite[Proposition 3.1]{Hor14-5}, the sequence of the Cesàro averages of the convolutions $\mu^{\ast n}\ast\delta_{x_0}$ has a weak-$\ast$ limit point $\nu$, which is a $\mu$-stationary measure on $\overline{P\mathcal{O}(G,\mathcal{F})}$, see \cite[Lemma 2.2.1]{KM96}. We have $\nu(\overline{Hx_0})=1$, where $Hx_0$ denotes the $H$-orbit of $x_0$ in $\overline{P\mathcal{O}(G,\mathcal{F})}$, and Proposition \ref{stationary} implies that $\nu(P\mathcal{AT}(G,\mathcal{F}))=1$. This implies that $\overline{Hx_0}\cap P\mathcal{AT}(G,\mathcal{F})$ is nonempty.
\end{proof}

As a consequence of Theorem \ref{boundary-fz} and Corollary \ref{nonempty-limit-set}, we get the following fact.

\begin{cor}\label{limit-set-fz}
Let $H\subseteq\text{Out}(G,\mathcal{F})$ be a nonelementary subgroup of $\text{Out}(G,\mathcal{F})$. Then the $H$-orbit of any point in $FZ(G,\mathcal{F})$ has a limit point in $\partial_{\infty} FZ(G,\mathcal{F})$.
\qed
\end{cor}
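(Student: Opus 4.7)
The plan is to transport the convergence statement of Corollary \ref{nonempty-limit-set}, which lives on $\overline{P\mathcal{O}(G,\mathcal{F})}$, across the coarsely well-defined projection $\psi:\mathcal{O}(G,\mathcal{F})\to FZ(G,\mathcal{F})$ by means of the boundary homeomorphism $\partial\psi$ from Theorem \ref{boundary-fz}. The key observation that makes this transport legal is Proposition \ref{ATX}: a limit point in $P\mathcal{AT}(G,\mathcal{F})$ automatically lies in $P\mathcal{X}(G,\mathcal{F})$, and hence has a well-defined image in $\partial_{\infty}FZ(G,\mathcal{F})$ under $\partial\psi$.

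Concretely, I would fix an arbitrary basepoint $x_0\in P\mathcal{O}(G,\mathcal{F})$ and let $y_0:=\psi(x_0)\in FZ(G,\mathcal{F})$. By Corollary \ref{nonempty-limit-set} applied to the nonelementary subgroup $H$, there exists a sequence $(\Phi_n)_{n\in\mathbb{N}}\in H^{\mathbb{N}}$ and a tree $T\in P\mathcal{AT}(G,\mathcal{F})$ such that $\Phi_n x_0\to T$ in $\overline{P\mathcal{O}(G,\mathcal{F})}$. Since $\mathcal{AT}(G,\mathcal{F})\subseteq\mathcal{X}(G,\mathcal{F})$ by Proposition \ref{ATX}, Theorem \ref{boundary-fz} applies and yields $\psi(\Phi_n x_0)\to\partial\psi(T)\in\partial_{\infty}FZ(G,\mathcal{F})$. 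Using the natural $\text{Out}(G,\mathcal{F})$-equivariance of $\psi$ (which holds up to a uniform coarse error, since both the action on $\mathcal{O}(G,\mathcal{F})$ and that on $FZ(G,\mathcal{F})$ are defined by precomposition), we have $\psi(\Phi_n x_0)=\Phi_n y_0$ up to bounded distance, and in particular $\Phi_n y_0\to\partial\psi(T)$ in $FZ(G,\mathcal{F})\cup\partial_{\infty}FZ(G,\mathcal{F})$.

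Finally, for an arbitrary point $y\in FZ(G,\mathcal{F})$, the isometry $\Phi_n$ satisfies $d(\Phi_n y,\Phi_n y_0)=d(y,y_0)$, so the sequence $(\Phi_n y)$ remains within bounded Hausdorff distance of $(\Phi_n y_0)$ and therefore has the same limit point $\partial\psi(T)\in\partial_{\infty}FZ(G,\mathcal{F})$. This provides the required limit point of the $H$-orbit of $y$.

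The only potentially delicate step is the passage from convergence in $\overline{P\mathcal{O}(G,\mathcal{F})}$ to convergence of $\psi$-images in $FZ(G,\mathcal{F})\cup\partial_{\infty}FZ(G,\mathcal{F})$; this is precisely the content of Theorem \ref{boundary-fz}, whose applicability is guaranteed by Proposition \ref{ATX}. Everything else amounts to noting that $\psi$ is coarsely $\text{Out}(G,\mathcal{F})$-equivariant and that isometries preserve distances, so the choice of basepoint does not matter.
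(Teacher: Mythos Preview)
Your proposal is correct and follows exactly the route the paper intends: the paper presents this corollary as an immediate consequence of Theorem \ref{boundary-fz} and Corollary \ref{nonempty-limit-set} (with Proposition \ref{ATX} implicit to ensure the limit tree lies in $\mathcal{X}(G,\mathcal{F})$), and you have simply spelled out that deduction in full, including the coarse equivariance of $\psi$ and the basepoint-independence argument.
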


\begin{proof}[Proof of Theorem \ref{nonelementary}]
Let $\mathcal{F}$ be a free factor system of $G$, and let $H$ be a nonelementary subgroup of $\text{Out}(G,\mathcal{F})$. Corollary \ref{nonempty-limit-set} shows that the $H$-orbit of any point in $FZ(G,\mathcal{F})$ has a limit point in $\partial_{\infty} FZ(G,\mathcal{F})$. As $H$ does not fix any element in $\partial_{\infty} FZ(G,\mathcal{F})$, the conclusion follows from the classification of subgroups of isometries of Gromov hyperbolic spaces (Theorem \ref{Gromov}).  
\end{proof}

\section{The inductive argument}\label{sec-6}

\subsection{Variations over the Tits alternative}

We recall from the introduction that a group $G$ is said to satisfy the Tits alternative relative to a class $\mathcal{C}$ of groups if every subgroup of $G$ either belongs to $\mathcal{C}$, or contains a rank two free subgroup. Our main result is the following. A group $H$ is \emph{freely indecomposable} if it does not split as a free product of the form $H=A\ast B$, where both $A$ and $B$ are nontrivial.

\begin{theo}\label{Tits}
Let $\{G_1,\dots,G_k\}$ be a finite collection of freely indecomposable countable groups, not isomorphic to $\mathbb{Z}$, let $F$ be a finitely generated free group, and let $$G:=G_1\ast\dots\ast G_k\ast F.$$ Let $\mathcal{C}$ be a collection of groups that is stable under isomorphisms, contains $\mathbb{Z}$, and is stable under subgroups, extensions, and passing to finite index supergroups. Assume that for all $i\in\{1,\dots,k\}$, both $G_i$ and $\text{Out}(G_i)$ satisfy the Tits alternative relative to $\mathcal{C}$.\\ 
Then $\text{Out}(G)$ and $\text{Aut}(G)$ satisfy the Tits alternative relative to $\mathcal{C}$. 
\end{theo}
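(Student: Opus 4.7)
The plan is to derive Theorem \ref{Tits} from the relative and more flexible Theorem \ref{theo-Tits-2}, which is amenable to induction on the complexity of the pair $(G,\mathcal{F})$. First I would observe that uniqueness of the Grushko decomposition forces every element of $\text{Out}(G)$ to permute the conjugacy classes $\{[G_1],\dots,[G_k]\}$, so $\text{Out}(G,\mathcal{F})$ has finite index in $\text{Out}(G)$; stability of $\mathcal{C}$ under finite-index supergroups reduces the first claim of Theorem \ref{Tits} to Theorem \ref{theo-Tits-2}. The statement for $\text{Aut}(G)$ then follows from the short exact sequence $1\to G/Z(G)\to\text{Aut}(G)\to\text{Out}(G)\to 1$, together with the Tits alternative for $G$ itself — which I would verify directly by considering the action of an arbitrary subgroup on the Grushko Bass--Serre tree $T^{\text{def}}$: vertex stabilizers are conjugates of the $G_i$, while any subgroup without a global fixed point is either virtually cyclic or contains a rank two free subgroup by a standard ping-pong argument.

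The proof of Theorem \ref{theo-Tits-2} proceeds by induction on $(\text{rk}_K(G,\mathcal{F}),|\mathcal{F}|)$ in lexicographic order, the auxiliary parameter being needed to handle the degenerate case when a proper free factor is infinite cyclic. The base cases are sporadic. There Propositions \ref{sporadic-1} and \ref{sporadic-2} exhibit $\text{Out}(G,\mathcal{F}^{(t)})$ as a quotient of a product of groups of the form $G_i/Z(G_i)$ or $(G_1\times G_1)/Z(G_1)$, each of which inherits the Tits alternative from the corresponding $G_i$ by subgroup, quotient and extension closure. The full $\text{Out}(G,\mathcal{F})$ then fits in an extension whose cokernel is a subgroup of $\prod_i\text{Out}(G_i)$, closing the base case.

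For the inductive step, let $H\subseteq\text{Out}(G,\mathcal{F})$ with $(G,\mathcal{F})$ nonsporadic, and apply the trichotomy of Theorem \ref{trichotomy}. If $H$ contains a rank two free subgroup, there is nothing to prove. If $H$ virtually preserves the conjugacy class of a proper $(G,\mathcal{F})$-free factor $A$, then up to finite index $H\subseteq\text{Out}(G,\mathcal{F}')$ where $\mathcal{F}'$ is obtained from $\mathcal{F}$ by absorbing all peripheral subgroups conjugate into $A$ into the new class $[A]$; by (\ref{rkK}) (or by strict decrease of $|\mathcal{F}|$ when $\text{rk}_K(A)=1$), the complexity strictly drops, and induction applies to $\text{Out}(G,\mathcal{F}')$. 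Finally, if $H$ virtually fixes a tree $T\in\partial P\mathcal{O}(G,\mathcal{F})$ with trivial arc stabilizers, then up to finite index $H\subseteq\text{Out}([T])$, and I would chain three extensions: a cyclic one for $\text{Out}([T])/\text{Out}(T)$ (from the scaling character $\lambda$), the natural map $\text{Out}(T)\to\prod_v\text{Out}(G_v)$ with kernel $\text{Out}(T,\{[G_v]\}^{(t)})$, and finally Corollary \ref{Tits-stabilizer} to handle this last kernel using that $G$ satisfies the Tits alternative.

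The main obstacle is the tree stabilizer case, because it simultaneously uses two different inductive inputs: the Tits alternative for each $\text{Out}(G_v,\mathcal{F}_{G_v})$ (to control the quotient piece), which is available precisely because Proposition \ref{per-finite} guarantees $\text{rk}_K(G_v)<\text{rk}_K(G,\mathcal{F})$, and the Tits alternative for $G$ itself (to invoke Corollary \ref{Tits-stabilizer}). Verifying that both the structure of point stabilizers of arc-trivial trees and the extension pieces of $\text{Out}([T])$ can indeed be handled strictly within the inductive hypothesis — in particular ensuring that peripheral pieces $\mathcal{F}_{G_v}$ have the form to which Theorem \ref{theo-Tits-2} applies — is where the bookkeeping is most delicate, and is what necessitates the refined lexicographic complexity rather than Kurosh rank alone.
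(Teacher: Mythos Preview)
Your overall architecture matches the paper's, but the free-factor branch of your induction has a genuine gap. When $H$ virtually fixes $[A]$ you write $H\subseteq\text{Out}(G,\mathcal{F}')$ and say ``induction applies to $\text{Out}(G,\mathcal{F}')$''. But the inductive hypothesis of Theorem~\ref{theo-Tits-2} for the pair $(G,\mathcal{F}')$ demands that every peripheral factor \emph{and its full outer automorphism group} satisfy the Tits alternative; the new peripheral factor is $A$, and you have no control over $\text{Out}(A)$. What induction gives you (via $\text{rk}_K(A,\mathcal{F}_A)<\text{rk}_K(G,\mathcal{F})$) is only the Tits alternative for $\text{Out}(A,\mathcal{F}_A)$, and this need not have finite index in $\text{Out}(A)$ once the factors of $\mathcal{F}$ are no longer freely indecomposable --- which is exactly what happens after the first step, since $A$ itself is typically a nontrivial free product. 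The paper circumvents this by carrying out the induction on Theorem~\ref{Tits-relative} (for $\text{Out}(G,\mathcal{F}^{(t)})$), whose hypothesis requires only the $G_i$, not $\text{Out}(G_i)$, to satisfy the alternative. It then uses the restriction morphism $\Psi:\text{Out}(G,\mathcal{F}^{(t)},A)\to\text{Out}(A,\mathcal{F}_A^{(t)})$, with kernel $\text{Out}(G,{\mathcal{F}'}^{(t)})$, and applies induction to image and kernel separately; neither step ever asks for $\text{Out}(A)$.

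There is also a bookkeeping error in your complexity: when $\text{rk}_K(A,\mathcal{F}_A)=1$ (so $A\cong\mathbb{Z}$), equation~(\ref{J}) gives $|\mathcal{F}'|=|\mathcal{F}|+1$, so your secondary coordinate \emph{increases}, not decreases. The paper uses $(\text{rk}_K,\text{rk}_f)$, and indeed $\text{rk}_f(G,\mathcal{F}')=\text{rk}_f(G,\mathcal{F})-1$ in that case by~(\ref{rkf}). Your treatment of the tree-stabilizer case and of $\text{Aut}(G)$ is fine and agrees with the paper.
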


In particular, Theorem \ref{Tits} applies to the case where $\mathcal{C}$ is either the class of virtually solvable groups (see \cite[Lemme 6.11]{Can11} for stability of $\mathcal{C}$ under extensions), or the class of virtually polycyclic groups. 

Theorem \ref{Tits} will be a consequence of the following relative version. For all $i\in\{1,\dots,k\}$, let $A_i\subseteq\text{Out}(G_i)$, and let $\mathcal{A}:=(A_1,\dots,A_k)$. We recall from Section \ref{sec-relative} that $\text{Out}(G,\mathcal{F}^{A})$ denotes the subgroup of $\text{Out}(G)$ consisting of those automorphisms that preserve the conjugacy classes of all subgroups $G_i$, and induce an outer automorphism in $A_i$ in restriction to each $G_i$. 

\begin{theo}\label{Tits-2}
Let $G$ be a countable group, let $\mathcal{F}$ be a free factor system of $G$, and let $\mathcal{A}$ be as above. Let $\mathcal{C}$ be a collection of groups that is stable under isomorphisms, contains $\mathbb{Z}$, and is stable under subgroups and extensions, and passing to finite index supergroups. Assume that for all $i\in\{1,\dots,k\}$, both $G_i$ and $A_i$ satisfy the Tits alternative relative to $\mathcal{C}$.\\ Then $\text{Out}(G,\mathcal{F}^{\mathcal{A}})$ satisfies the Tits alternative relative to $\mathcal{C}$.
\end{theo}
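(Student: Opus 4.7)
The plan is to proceed by induction on the Kurosh rank $\text{rk}_K(G,\mathcal{F})$, using the sporadic situations of Section \ref{sec-2} as the base case and deducing the inductive step from the trichotomy of Theorem \ref{trichotomy} together with the complexity-decrease formulas (\ref{J}) and (\ref{rkK}). Throughout the argument I will use freely that the hypotheses on $\mathcal{C}$ make it closed under subgroups, extensions, and finite-index supergroups, and that Kurosh's subgroup theorem propagates the Tits alternative relative to $\mathcal{C}$ through free products.

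For the base case, suppose $(G,\mathcal{F})$ is sporadic. The natural morphism
\begin{displaymath}
\text{Out}(G,\mathcal{F}^{\mathcal{A}}) \longrightarrow \prod_{i=1}^{k} A_i
\end{displaymath}
has kernel $\text{Out}(G,\mathcal{F}^{(t)})$, which by Propositions \ref{sporadic-1} and \ref{sporadic-2} is, up to an index-at-most-two subgroup, a central quotient of a product of the $G_i$. Since a rank-two free subgroup has trivial centre and therefore embeds into any central quotient of a group containing it, and since the hypothesis on the $G_i$ and the $A_i$ propagates to finite products and to finite-index central quotients using the stability axioms for $\mathcal{C}$, I obtain the Tits alternative relative to $\mathcal{C}$ for both the kernel and the image of the above morphism, and then for the whole extension.

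For the inductive step, let $H \subseteq \text{Out}(G,\mathcal{F}^{\mathcal{A}})$, which I view as a subgroup of $\text{Out}(G,\mathcal{F})$. Since $(G,\mathcal{F})$ is nonsporadic, Theorem \ref{trichotomy} applies to $H$. If $H$ contains a rank-two free subgroup, we are done. If a finite-index subgroup $H_1 \subseteq H$ fixes a tree $T \in \partial P\mathcal{O}(G,\mathcal{F})$ with trivial arc stabilizers, then after a further passage to a finite-index subgroup (to trivialise the cyclic scaling character $\lambda:\text{Out}([T])\to\mathbb{R}_+^*$ of Section \ref{sec-3} and to reach the subgroup $\text{Out}^0(T,\{[G_v]\}^{(t)})$ of Theorem \ref{Guirardel-Levitt}), $H_1$ embeds into $\prod_{v\in V} G_v^{d_v}/Z(G_v)$. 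By Proposition \ref{per-finite}, each vertex stabiliser $G_v$ has $\text{rk}_K(G_v)<\text{rk}_K(G,\mathcal{F})$, so the inductive hypothesis combined with Kurosh's theorem applied to the Kurosh decomposition of $G_v$ yields the Tits alternative relative to $\mathcal{C}$ for $G_v$; the central-quotient argument from the sporadic case then handles $\prod G_v^{d_v}/Z(G_v)$. Finally, if a finite-index subgroup $H_1$ of $H$ preserves the conjugacy class of a proper $(G,\mathcal{F})$-free factor $A$, I form $\mathcal{F}'$ by collapsing all $\mathcal{F}$-factors conjugate into $A$ to the single class $[A]$. By (\ref{rkK}), both $\text{rk}_K(G,\mathcal{F}')<\text{rk}_K(G,\mathcal{F})$ and $\text{rk}_K(A,\mathcal{F}_A)<\text{rk}_K(G,\mathcal{F})$. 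Applying the inductive hypothesis first to the smaller pair $(A,\mathcal{F}_A)$ yields the Tits alternative relative to $\mathcal{C}$ for $B_A := \text{Out}(A,\mathcal{F}_A^{\mathcal{A}_A})$, and then applying it to $(G,\mathcal{F}')$ with the augmented datum $\mathcal{A}' \cup \{B_A\}$ shows that $H_1$ sits in a group satisfying the Tits alternative relative to $\mathcal{C}$, hence so does $H$ by finite-index stability.

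The main obstacle is organising the induction so that the collection $\mathcal{A}$ is treated uniformly with the pair $(G,\mathcal{F})$: at the free-factor inductive step, the new $\mathcal{A}$-datum at the collapsed factor $[A]$ is itself an outer automorphism group $B_A$ whose Tits alternative must be established by a preceding induction step, and an analogous bookkeeping is needed in the tree-stabiliser case in order to relate the $\mathcal{A}$-constraints on $H$ to the hypothesis applied to the vertex stabilisers $G_v$. A careful tracking of these auxiliary data through the complexity decrease, together with the observation that any restriction map to a subgroup, a peripheral quotient, or a free-factor piece respects the $\mathcal{A}$-constraint, is what ultimately makes the induction go through.
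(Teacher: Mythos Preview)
Your overall strategy is reasonable, but the paper takes a much shorter route: it observes that the natural map $\text{Out}(G,\mathcal{F}^{\mathcal{A}})\to\prod_i A_i$ has kernel $\text{Out}(G,\mathcal{F}^{(t)})$, and then simply invokes Theorem~\ref{Tits-relative} (the case $\mathcal{A}$ trivial) together with stability of the Tits alternative under extensions. All the inductive work is done once, inside the proof of Theorem~\ref{Tits-relative}, with no $\mathcal{A}$-bookkeeping. Your attempt to carry the $\mathcal{A}$-data through the induction is not wrong in spirit, but it creates extra difficulties and, as written, contains two genuine errors.

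First, in the free-factor branch you assert that \eqref{rkK} gives $\text{rk}_K(G,\mathcal{F}')<\text{rk}_K(G,\mathcal{F})$. This is false when the proper free factor $A$ has $\text{rk}_K(A,\mathcal{F}_A)=1$ (for instance $A\cong\mathbb{Z}$): then \eqref{rkK} yields $\text{rk}_K(G,\mathcal{F}')=\text{rk}_K(G,\mathcal{F})$, and your induction on Kurosh rank alone does not terminate. This is exactly why the paper inducts on the pair $(\text{rk}_K,\text{rk}_f)$ for the lexicographic order, using \eqref{rkf} to see that $\text{rk}_f$ strictly drops in that residual case.

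Second, in the tree-stabilizer branch you claim that after passing to a finite-index subgroup, $H_1$ embeds into $\prod_{v}G_v^{d_v}/Z(G_v)$. This is not correct: Theorem~\ref{Guirardel-Levitt} only gives such an embedding for $\text{Out}^0(T,\{[G_v]\}^{(t)})$, which has finite index in $\text{Out}(T,\{[G_v]\}^{(t)})$ but \emph{not} in $\text{Out}([T])$. The subgroup of automorphisms acting by inner conjugation on every $G_v$ is typically of infinite index. The missing step is the restriction morphism $\text{Out}^0(T)\to\prod_v\text{Out}(G_v,\mathcal{F}_{G_v}^{(t)})$ (or, in your setup, into $\prod_v\text{Out}(G_v,\mathcal{F}_{G_v}^{\mathcal{A}_v})$), whose kernel lies in $\text{Out}(T,\{[G_v]\}^{(t)})$; one must apply the inductive hypothesis to each $\text{Out}(G_v,\mathcal{F}_{G_v}^{\ldots})$ using Proposition~\ref{per-finite}, and only then invoke Guirardel--Levitt for the kernel. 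Your final paragraph hints that you sense this, but the statement in the body of the proof is wrong as written.
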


When all subgroups in $\mathcal{A}$ are trivial, Theorem \ref{Tits-2} specifies as follows.

\begin{theo}\label{Tits-relative}
Let $G$ be a countable group, and let $\mathcal{F}$ be a free factor system of $G$. Let $\mathcal{C}$ be a collection of groups that is stable under isomorphisms, contains $\mathbb{Z}$, and is stable under subgroups and extensions, and passing to finite index supergroups. Assume that all peripheral subgroups of $G$ satisfy the Tits alternative relative to $\mathcal{C}$.\\ Then $\text{Out}(G,\mathcal{F}^{(t)})$ satisfies the Tits alternative relative to $\mathcal{C}$.
\end{theo}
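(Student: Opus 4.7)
The plan is to prove Theorem \ref{Tits-relative} by induction on the lexicographic complexity $(\text{rk}_K(G,\mathcal{F}),\text{rk}_f(G,\mathcal{F}))$, combining the trichotomy of Theorem \ref{trichotomy} for the inductive step with the sporadic descriptions provided by Propositions \ref{sporadic-1} and \ref{sporadic-2} for the base case. The overall shape of the argument mirrors the Ivanov/McCarthy--Papadopoulos strategy for mapping class groups outlined in the introduction.

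For the base case, $(G,\mathcal{F})$ is sporadic, and Propositions \ref{sporadic-1}--\ref{sporadic-2} present $\text{Out}(G,\mathcal{F}^{(t)})$ as (a finite-index supergroup of) a group of the form $G_1/Z(G_1)\times G_2/Z(G_2)$ or $(G_1\times G_1)/Z(G_1)$. Observing that a rank-two free group has trivial center, any rank-two free subgroup of $G_i$ injects into $G_i/Z(G_i)$, so a standard lifting argument combined with stability of $\mathcal{C}$ under extensions, subgroups, and finite-index supergroups transfers the Tits alternative relative to $\mathcal{C}$ from each $G_i$ to $G_i/Z(G_i)$, and then to the product, and finally to $\text{Out}(G,\mathcal{F}^{(t)})$.

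For the inductive step, assume $(G,\mathcal{F})$ is nonsporadic and let $H\subseteq\text{Out}(G,\mathcal{F}^{(t)})$ be arbitrary. Apply Theorem \ref{trichotomy} to $H$ viewed inside $\text{Out}(G,\mathcal{F})$. If $H$ contains a rank-two free subgroup, we are done. If $H$ has a finite-index subgroup $H^0$ fixing a tree $T\in\partial P\mathcal{O}(G,\mathcal{F})$ with trivial arc stabilizers, then $H^0\subseteq\text{Out}(T,\{[G_v]\}^{(t)})$; Proposition \ref{per-finite} yields finitely many orbits of point stabilizers, each with $\text{rk}_K(G_v)<\text{rk}_K(G,\mathcal{F})$, so an inductive application of the theorem to the Kurosh structure of each $G_v$ combined with stability of the relative Tits alternative under free products shows each $G_v$ satisfies the Tits alternative relative to $\mathcal{C}$. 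Theorem \ref{Guirardel-Levitt} then embeds a finite-index subgroup of $\text{Out}(T,\{[G_v]\}^{(t)})$ into $\prod_v G_v^{d_v}/Z(G_v)$, and the trivial-center argument of the base case together with extension-stability of $\mathcal{C}$ promotes the relative Tits alternative to this product, hence to $H^0$, and finally to $H$. If instead $H$ has a finite-index subgroup $H^0$ fixing the conjugacy class of a proper $(G,\mathcal{F})$-free factor $A$, form the enlarged factor system $\mathcal{F}'$ by removing all peripheral classes conjugate into $A$ and inserting $[A]$. Each $\Phi\in H^0$ induces an element of $\text{Out}(A,\mathcal{F}_A^{(t)})$, producing a morphism $\rho:H^0\to\text{Out}(A,\mathcal{F}_A^{(t)})$ whose kernel is contained in $\text{Out}(G,\mathcal{F}'^{(t)})$. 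Equations \eqref{J}, \eqref{rkf}, \eqref{rkK} show both $(G,\mathcal{F}')$ and $(A,\mathcal{F}_A)$ have strictly smaller complexity than $(G,\mathcal{F})$, so the induction hypothesis applies to both and, by stability of $\mathcal{C}$ under extensions, $H^0$ and hence $H$ satisfy the Tits alternative relative to $\mathcal{C}$.

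The main obstacle is the second case: Corollary \ref{Tits-stabilizer} is stated for the classical Tits alternative, and one must verify that the Guirardel--Levitt embedding of Theorem \ref{Guirardel-Levitt} still delivers the relative version. The $G_v$'s are not peripheral subgroups of $G$ but rather subgroups with smaller Kurosh rank, so a short Bass--Serre-theoretic lemma is needed to propagate the relative Tits alternative from the peripheral subgroups arising in the Kurosh decomposition of $G_v$ to $G_v$ itself before feeding it into the stabilizer embedding. A secondary but routine issue is verifying that the induction parameter strictly decreases in the third case for every proper $(G,\mathcal{F})$-free factor $A$, including when $A$ is cyclic: if $\text{rk}_f(A)\geq 1$ then $\text{rk}_f(G,\mathcal{F}')$ drops by \eqref{rkf}, while if $\text{rk}_f(A)=0$ then properness of $A$ forces $\text{rk}_K(A)\geq 2$ and $\text{rk}_K(G,\mathcal{F}')$ drops by \eqref{rkK}, so the lexicographic ordering handles both cases uniformly.
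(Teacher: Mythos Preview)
Your overall architecture---induction on $(\text{rk}_K,\text{rk}_f)$, sporadic base case, and the trichotomy of Theorem~\ref{trichotomy} for the inductive step---matches the paper's proof, and your handling of the free-factor branch (including the lexicographic drop) is correct.

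The genuine gap is in the tree branch. Your assertion that $H^0\subseteq\text{Out}(T,\{[G_v]\}^{(t)})$ is false, and this is not a cosmetic slip. Two layers are missing. First, the trichotomy only gives that $H^0$ fixes the \emph{projective} class $[T]$; one must use the scaling character $\lambda:\text{Out}([T])\to\mathbb{R}_+^{\ast}$ (whose image is cyclic) to realise $H^0$ as a cyclic extension of a subgroup of $\text{Out}(T)$. Second, and more importantly, elements of $\text{Out}(T)\cap\text{Out}(G,\mathcal{F}^{(t)})$ have no reason to act by inner automorphisms on the point stabilizers $G_v$: the $G_v$ are typically \emph{not} peripheral, so the hypothesis $\Phi\in\text{Out}(G,\mathcal{F}^{(t)})$ says nothing about $\Phi|_{G_v}$. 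The paper closes this by passing to a finite-index subgroup $\text{Out}^0(T)$ that preserves each conjugacy class $[G_v]$, and then using the restriction morphism
\[
\text{Out}^0(T)\longrightarrow\prod_{v\in V}\text{Out}(G_v,\mathcal{F}_{G_v}^{(t)}),
\]
whose kernel lies in $\text{Out}(T,\{[G_v]\}^{(t)})$. The \emph{induction hypothesis} is then applied to each factor $\text{Out}(G_v,\mathcal{F}_{G_v}^{(t)})$ (legitimate because $\text{rk}_K(G_v,\mathcal{F}_{G_v})<\text{rk}_K(G,\mathcal{F})$ by Proposition~\ref{per-finite}), while the kernel is handled via Theorem~\ref{Guirardel-Levitt}. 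In your outline the induction hypothesis is never invoked on these outer automorphism groups: you only argue that each $G_v$ itself satisfies the Tits alternative (which is indeed needed to feed into Guirardel--Levitt, and follows, as you note, from the Kurosh decomposition of $G_v$), but this alone does not control $H^0$. Without the restriction morphism and the inductive step on its target, the tree branch does not close.
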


In the classical case where $\mathcal{C}$ is the class of virtually solvable groups, we also mention that our proof of Theorem \ref{Tits} also provides a bound on the degree of solvability of the finite-index solvable subgroup arising in the statement.

\begin{question}
If all groups $G_i$ and $\text{Out}(G_i)$ satisfy the Tits alternative relative to the class of virtually abelian subgroups, does $\text{Out}(G)$ also satisfy the Tits alternative relative to this class ? Similarly, if all groups $G_i$ satisfy the Tits alternative relative to the class of virtually abelian subgroups, does $\text{Out}(G,\mathcal{F}^{(t)})$ also satisfy the Tits alternative relative to this class ? The issue here is that this class is not stable under extensions. Our question is motivated by the classical case of finitely generated free groups, for which Bestvina, Feighn and Handel have proved that every virtually solvable subgroup of $\text{Out}(F_N)$ is actually virtually abelian and finitely generated, with a bound on the index of the abelian subgroup that only depends on $N$ (\cite{BFH05}, see also \cite{Ali02}).
\end{question}

We first explain how to derive Theorems \ref{Tits-2} and \ref{Tits} from Theorem \ref{Tits-relative}, before proving Theorem \ref{Tits-relative} in the next section.

\begin{proof}[Proof of Theorem \ref{Tits-2}]
There is a morphism from $\text{Out}(G,\mathcal{F}^{\mathcal{A}})$ to the direct product $A_1\times\dots\times A_k$, whose kernel is equal to $\text{Out}(G,\mathcal{F}^{(t)})$. Since $\mathcal{C}$ is stable under extensions, the class of groups satisfying the Tits alternative relative to $\mathcal{C}$ is stable under extensions, so Theorem \ref{Tits-2} follows from Theorem \ref{Tits-relative}.
\end{proof}

\begin{proof}[Proof of Theorem \ref{Tits}]
Let $\mathcal{F}:=\{[G_1],\dots,[G_k]\}$. As all $G_i$'s are freely indecomposable, the group $\text{Out}(G)$ permutes the conjugacy classes in $\mathcal{F}$. Therefore, there exists a finite-index subgroup $\text{Out}^0(G)$ of $\text{Out}(G)$ which preserves all conjugacy classes in $\mathcal{F}$. For all $i\in\{1,\dots,k\}$, the group $G_i$ is equal to its own normalizer in $G$, so every element $\Phi\in\text{Out}^{0}(G)$ induces a well-defined element of $\text{Out}(G_i)$. In other words, the subgroup $\text{Out}^0(G)$ is a subgroup of $\text{Out}(G,\mathcal{F}^{\mathcal{A}})$, with $A_i=\text{Out}(G_i)$ for all $i\in\{1,\dots,k\}$. Theorem \ref{Tits} thus follows from Theorem \ref{Tits-2} (the statement for the group $\text{Aut}(G)$ also follows, because if both $G$ and $\text{Out}(G)$ satisfy the Tits alternative relative to $\mathcal{C}$, then so does $\text{Aut}(G)$).
\end{proof}

\subsection{Proof of Theorem \ref{Tits-relative}}

The proof is by induction on the pair $(\text{rk}_K(G,\mathcal{F}),\text{rk}_f(G,\mathcal{F}))$, for the lexicographic order. Let $\mathcal{F}$ be a free factor system of $G$. The conclusion holds if $\text{rk}_K(G,\mathcal{F})=1$: in this case, the group $G$ is either peripheral, or isomorphic to $\mathbb{Z}$. It also holds in the sporadic cases by Propositions \ref{sporadic-1} and \ref{sporadic-2}. We now assume that $(G,\mathcal{F})$ is nonsporadic, and let $H$ be a subgroup of $\text{Out}(G,\mathcal{F}^{(t)})$. We will show that either $H$ contains a rank two free subgroup, or $H\in\mathcal{C}$. Using Theorem \ref{trichotomy}, we can assume that either $H$ preserves a finite set of conjugacy classes of proper $(G,\mathcal{F})$-free factors, or that $H$ virtually fixes a tree with trivial arc stabilizers in $\partial P\mathcal{O}(G,\mathcal{F})$. 
\\
\\
\indent We first assume that $H$ has a finite index subgroup $H^0$ which preserves the conjugacy class of a proper $(G,\mathcal{F})$-free factor $G'$. We denote by $\text{Out}(G,\mathcal{F}^{(t)},G')$ the subgroup of $\text{Out}(G,\mathcal{F}^{(t)})$ made of those elements that preserve the conjugacy class of $G'$ (so $H^0$ is a subgroup of $\text{Out}(G,\mathcal{F}^{(t)},G')$). Since $G'$ is equal to its own normalizer in $G$, every element $\Phi\in\text{Out}(G,\mathcal{F}^{(t)},G')$ induces by restriction a well-defined outer automorphism $\Phi_{G'}$ of $G'$. The automorphism $\Phi_{G'}$ coincides with a conjugation by an element $g\in G$ in restriction to every factor in $\mathcal{F}_{G'}$ (where we recall that $\mathcal{F}_{G'}$ is the collection of $G'$-conjugacy classes of subgroups in $\mathcal{F}$ that are contained in $G'$). Since $G'$ is malnormal, we have $g\in G'$. In other words, there is a restriction morphism  $$\Psi:\text{Out}(G,\mathcal{F}^{(t)},G')\to\text{Out}(G',\mathcal{F}_{G'}^{(t)}).$$ Since $G'$ is a $(G,\mathcal{F})$-free factor, there exist $i_1<\dots<i_s$ such that $G$ splits as $$G=G'\ast G'_{i_1}\ast\dots\ast G'_{i_s}\ast F',$$ where $G'_{i_j}$ is conjugate to $G_{i_j}$ for all $j\in\{1,\dots,s\}$, and $F'$ is a finitely generated free group. We let $\mathcal{F}':=\{[G'],[G'_{i_1}],\dots,[G'_{i_s}]\}$. Then the kernel of $\Psi$ is equal to $\text{Out}(G,{\mathcal{F}'}^{(t)})$.

Recall from Equations \eqref{rkf} and \eqref{rkK} in Section \ref{sec-relative} that $\text{rk}_f(G',\mathcal{F}_{G'})+\text{rk}_f(G,\mathcal{F}')=\text{rk}_f(G,\mathcal{F})$, and $\text{rk}_K(G',\mathcal{F}_{G'})+\text{rk}_K(G,\mathcal{F}')=\text{rk}_K(G,\mathcal{F})+1$. Since $G'$ is a proper $(G,\mathcal{F})$-free factor, we either have $\text{rk}_K(G',\mathcal{F}_{G'})\ge 2$, in which case $\text{rk}_K(G,\mathcal{F}')<\text{rk}_K(G,\mathcal{F})$, or else $\text{rk}_K(G',\mathcal{F}_{G'})=\text{rk}_f(G',\mathcal{F}_{G'})=1$, in which case $\text{rk}_K(G,\mathcal{F}')=\text{rk}_K(G,\mathcal{F})$ and $\text{rk}_f(G,\mathcal{F}')<\text{rk}_f(G,\mathcal{F})$. Since $G'$ is a proper $(G,\mathcal{F})$-free factor, we also have $\text{rk}_K(G',\mathcal{F}_{G'})<\text{rk}_K(G,\mathcal{F})$. Our induction hypothesis therefore implies that both $\text{Out}(G',\mathcal{F}_{G'}^{(t)})$ and $\text{Out}(G,{\mathcal{F}'}^{(t)})$ satisfy the Tits alternative relative to $\mathcal{C}$. Since $\mathcal{C}$ is stable under extensions, the class of groups satisfying the Tits alternative relative to $\mathcal{C}$ is stable under extensions. So $H^0$, and hence $H$, satisfies the Tits alternative relative to $\mathcal{C}$. 
\\
\\
\indent We now assume that $H$ has a finite index subgroup $H^0$ which fixes the projective class of a tree $[T]\in\overline{P\mathcal{O}(G,\mathcal{F})}$ with trivial arc stabilizers. Then $H^0$ is a cyclic extension of a subgroup $H'$ that fixes a nonprojective tree $T\in\overline{\mathcal{O}(G,\mathcal{F})}$ \cite{GL14-2}. It is enough to show that $\text{Out}(T)$ satisfies the Tits alternative relative to $\mathcal{C}$.

Denote by $V$ the finite set of $G$-orbits of points with nontrivial stabilizer in $T$, and by $\{G_v\}_{v\in V}$ the collection of point stabilizers in $T$. As any element of $\text{Out}(T)$ induces a permutation of the finite set $V$, some finite index subgroup $\text{Out}^0(T)$ of $\text{Out}(T)$ preserves the conjugacy class of all groups $G_v$ with $v\in V$. As $T$ has trivial arc stabilizers, all point stabilizers in $T$ are equal to their normalizer in $G$. As above, there is a morphism from $\text{Out}^0(T)$ to the direct product of all $\text{Out}(G_v,\mathcal{F}_{G_v}^{(t)})$, whose kernel is contained in $\text{Out}(T,\{[G_v]\}^{(t)})$. 

Corollary \ref{Tits-stabilizer} shows that $\text{Out}(T,\{[G_v]\}^{(t)})$ satisfies the Tits alternative relative to $\mathcal{C}$. Since $T$ has trivial arc stabilizers, Proposition \ref{per-finite} implies that $\text{rk}_K(G_v,\mathcal{F}_{G_v})\le\text{rk}_K(G,\mathcal{F})-1$ for all $v\in V$. Therefore, our induction hypothesis implies that $\text{Out}(G_v,\mathcal{F}_{G_v}^{(t)})$ satisfies the Tits alternative relative to $\mathcal{C}$. As the Tits alternative is stable under extensions, we deduce that $\text{Out}(T)$, and hence $H$, satisfies the Tits alternative relative to $\mathcal{C}$. 
\qed

\section{Applications}\label{sec-7}

\subsection{Outer automorphisms of right-angled Artin groups}

Given a finite simplicial graph $\Gamma$, the \emph{right-angled Artin group} $A_{\Gamma}$ is the group defined by the following presentation. Generators of $A_{\Gamma}$ are the vertices of $\Gamma$, and relations are given by commutation of any two generators that are joined by an edge in $\Gamma$. As a consequence of Theorem \ref{Tits} and of work by Charney and Vogtmann \cite{CV11}, we show that the outer automorphism group of any right-angled Artin group satisfies the Tits alternative.

\begin{theo} \label{tits-raag}
For all finite simplicial graphs $\Gamma$, the group $\text{Out}(A_{\Gamma})$ satisfies the Tits alternative.
\end{theo}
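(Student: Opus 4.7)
The plan is to prove Theorem \ref{tits-raag} by induction on the number $|V(\Gamma)|$ of vertices of $\Gamma$. The base case $|V(\Gamma)|\le 1$ is trivial, since $A_\Gamma$ is then either trivial or isomorphic to $\mathbb{Z}$, and so is its outer automorphism group up to finite index. For the inductive step, I would split into two cases according to whether $\Gamma$ is connected.

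For the disconnected case, let $\Gamma_1,\dots,\Gamma_k$ be the connected components of $\Gamma$. Then $A_\Gamma\cong A_{\Gamma_1}\ast\dots\ast A_{\Gamma_k}$. After reindexing so that $\Gamma_1,\dots,\Gamma_m$ are the components with at least two vertices and the remaining components are isolated vertices, we obtain $A_\Gamma\cong A_{\Gamma_1}\ast\dots\ast A_{\Gamma_m}\ast F_{k-m}$, which is the Grushko decomposition of $A_\Gamma$ (each $A_{\Gamma_i}$ for $i\le m$ is freely indecomposable and not cyclic, which can be verified directly from the action on the Salvetti complex, or taken from the connected case below treated for a smaller graph). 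Each $|V(\Gamma_i)|<|V(\Gamma)|$, so by induction each $\text{Out}(A_{\Gamma_i})$ satisfies the Tits alternative; and each $A_{\Gamma_i}$ itself satisfies the Tits alternative because right-angled Artin groups are linear. Theorem \ref{Tits} then yields the Tits alternative for $\text{Out}(A_\Gamma)$.

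For the connected case (with $|V(\Gamma)|\ge 2$), the plan is to invoke the arguments of Charney and Vogtmann \cite{CV11}. They build a filtration of $\text{Out}(A_\Gamma)$ whose successive quotients are either abelian or subgroups of outer automorphism groups $\text{Out}(A_{\Gamma'})$ for certain proper subgraphs $\Gamma'$ of $\Gamma$ (obtained via link and star subgraphs, and restriction/projection homomorphisms in the sense of Laurence). Their induction went through under a homogeneity hypothesis on $\Gamma$ precisely because, without it, the subgraphs $\Gamma'$ appearing in the induction could be disconnected, so that $A_{\Gamma'}$ would split as a free product and fall outside the scope of their inductive hypothesis. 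Since such disconnected subgraphs are now handled by the disconnected case above via Theorem \ref{Tits}, Charney and Vogtmann's argument goes through without the homogeneity assumption, and we conclude using the inductive hypothesis on graphs with fewer vertices.

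The main obstacle is the connected case: one must verify that every use of the Tits alternative in the Charney--Vogtmann induction is covered either by the inductive hypothesis on a strictly smaller graph (possibly disconnected, handled by Case 1), or by the known Tits alternative for linear groups, or by stability of the Tits alternative under extensions (which is automatic once $\mathcal{C}$ is the class of virtually solvable groups). This bookkeeping, essentially indicated in \cite[Section 7]{CV11}, is what allows the two cases to close the induction together.
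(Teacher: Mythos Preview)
Your proposal is correct and follows essentially the same approach as the paper: induction on $|V(\Gamma)|$, using Theorem \ref{Tits} together with the Grushko decomposition to reduce to the connected case, and then applying Charney--Vogtmann's projection morphisms (to $\text{Out}(A_{lk[v]})$ for maximal $[v]$, with free abelian kernel) to pass to strictly smaller, possibly disconnected, graphs. The paper makes the connected step slightly more explicit by splitting on whether $\Gamma$ has one or several maximal equivalence classes of vertices (Propositions \ref{single-maximal} and \ref{amalgamated-projection}), but this is exactly the machinery you are invoking.
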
  

Let $N$ be the number of components of $\Gamma$ consisting of a single point, and let $\Gamma_1,\dots,\Gamma_k$ be the connected components of $\Gamma$ consisting of more than one point. Then we have $A_{\Gamma}=A_{\Gamma_1}\ast\dots\ast A_{\Gamma_k}\ast F_N$. All subgroups $A_{\Gamma_i}$ of this decomposition are freely indecomposable and not isomorphic to $\mathbb{Z}$: it is the Grushko decomposition of $A_{\Gamma}$. 

Theorem \ref{tits-raag} was first proven by Charney, Crisp and Vogtmann in the case where $\Gamma$ is connected and triangle-free \cite{CCV07}, then extended by Charney and Vogtmann in \cite{CV11} to the case of graphs satisfying some homogeneity condition, where it was noticed that the full version would follow from Theorem \ref{Tits}. We now explain how to make this deduction. The reader is referred to \cite{Cha07} for a survey paper on right-angled Artin groups, and to \cite{CCV07,CV09,CV11} for a study of their automorphism groups. 

Let $\Gamma$ be a finite simplicial connected graph. Let $v\in\Gamma$ be a vertex of $\Gamma$. The \emph{link} of $v$, denoted by $lk(v)$, is the full subgraph of $\Gamma$ spanned by all vertices adjacent to $v$. The \emph{star} of $v$, denoted by $st(v)$, is the full subgraph of $\Gamma$ spanned by $v$ and $lk(v)$. The relation $\le$ defined on the set of vertices of $\Gamma$ by setting $v\le w$ whenever $lk(v)\subseteq st(w)$ is transitive, and induces a partial ordering on the set of equivalence classes of vertices $[v]$, where $w\in [v]$ if and only if $v\le w$ and $w\le v$ \cite[Lemma 2.2]{CV09}. A vertex $v$ of $\Gamma$ is \emph{maximal} if its equivalence class is maximal for this relation. The \emph{link} $lk(\Theta)$ of a subgraph $\Theta$ of $\Gamma$ is the intersection of the links of all vertices in $\Theta$. The \emph{star} $st(\Theta)$ of $\Theta$ is the full subgraph of $\Gamma$ spanned by both $\Theta$ and its link. Given a full subgraph $\Theta$ of $\Gamma$, the group $A_{\Theta}$ embeds as a subgroup of $A_{\Gamma}$. 

Laurence \cite{Lau95}, extending work of Servatius \cite{Ser89}, gave a finite generating set of $\text{Out}(A_{\Gamma})$, consisting of graph automorphisms, inversions of a single generator, transvections $v\mapsto vw$ with $v\le w$, and partial conjugations by a generator $v$ on one component of $\Gamma\smallsetminus st(v)$.

The subgroup $\text{Out}^0(A_{\Gamma})$ of $\text{Out}(A_{\Gamma})$ generated by inversions, transvections and partial conjugations, has finite index in $\text{Out}(A_{\Gamma})$. Assume that $\Gamma$ is connected, and let $v$ be a maximal vertex. Then any element of $\text{Out}^0(A_{\Gamma})$ has a representative $f_v$ which preserves both $A_{[v]}$ and $A_{st[v]}$ \cite[Proposition 3.2]{CV09}. Restricting $f_v$ to $A_{st[v]}$ gives a \emph{restriction morphism}
\begin{displaymath}
R_v:\text{Out}^0(A_{\Gamma})\to\text{Out}^0(A_{st[v]})
\end{displaymath}

\noindent \cite[Corollary 3.3]{CV09}. The map from $A_{\Gamma}$ to $A_{\Gamma\smallsetminus [v]}$ that sends each generator in $[v]$ to the identity induces an \emph{exclusion morphism} 
\begin{displaymath}
E_v:\text{Out}^0(A_{\Gamma})\to\text{Out}^0(A_{\Gamma\smallsetminus [v]}).
\end{displaymath}

\noindent Since $v$ is a maximal vertex for the subgraph $st[v]$, and since $lk[v]=st[v]\smallsetminus [v]$, we can compose the restriction morphism on $A_{\Gamma}$ with the exclusion morphism on $A_{st[v]}$ to get a \emph{projection morphism}
\begin{displaymath}
P_v:\text{Out}^0(A_{\Gamma})\to \text{Out}^0(A_{lk[v]}).
\end{displaymath}

\noindent \cite[Corollary 3.3]{CV09}. By combining the projection morphisms for all maximal equivalence classes of vertices of $\Gamma$, we get a morphism
\begin{displaymath}
P:\text{Out}^0(A_{\Gamma})\to\prod\text{Out}^0(A_{lk[v]}),
\end{displaymath}

\noindent where the product is taken over the set of maximal equivalence classes of vertices of $\Gamma$.

\begin{prop} (Charney--Vogtmann \cite[Theorem 4.2]{CV09}) \label{amalgamated-projection}
If $\Gamma$ is a connected graph that contains at least two equivalence classes of maximal vertices, then the kernel of $P$ is a free abelian subgroup of $\text{Out}^0(A_{\Gamma})$.
\end{prop}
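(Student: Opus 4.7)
My plan is to exploit Laurence's generating set for $\text{Out}^0(A_\Gamma)$ — inversions, transvections $u \mapsto uw$ (with $u \le w$), and partial conjugations — and to determine precisely which generators survive in $\ker P$. For a fixed maximal equivalence class $[v]$, I would first unpack the definition: $P_v$ is $E_v \circ R_v$, so an element $\phi$ lies in $\ker P_v$ exactly when its restriction to $A_{st[v]}$, after killing the generators of $[v]$, acts as an inner automorphism of $A_{lk[v]}$. Running through Laurence's generators one by one, I would check that (i) an inversion $\iota_w$ is killed by all $P_v$ only if $w$ never appears nontrivially in any $A_{lk[v]}$, which under the hypothesis that $\Gamma$ is connected with at least two maximal classes forces $\iota_w \notin \ker P$; (ii) a transvection $u\mapsto uw$ survives every $P_v$ precisely when $w$ belongs to a maximal equivalence class and the pair $(u,w)$ is ``extremal'' in a controlled sense; (iii) partial conjugations by $w$ become inner after restriction to $A_{st[w]}$, and under the hypothesis of two maximal classes they can be shown to map trivially through every $P_v$.

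Having isolated the surviving generators (essentially the extremal transvections $u\mapsto uw$ with $w$ maximal), the next step is to check they commute in $\text{Out}^0(A_\Gamma)$. This is a direct computation using the commutation relations between transvections given by Servatius, together with the observation that when $w,w'$ are both maximal, the conditions $u\le w$ and $u'\le w'$ prevent the bad configurations that would make $[u\mapsto uw, u'\mapsto u'w']$ nontrivial in $\text{Out}$. So $\ker P$ is abelian.

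To upgrade ``abelian'' to ``free abelian,'' I would embed $\ker P$ into a free abelian target. The natural candidate is the abelianization action: every $\phi \in \text{Out}^0(A_\Gamma)$ induces an automorphism of $A_\Gamma^{\text{ab}} \cong \mathbb{Z}^{|V(\Gamma)|}$, and the extremal transvections above act as elementary unipotent matrices $I+E_{u,w}$. These are visibly linearly independent in $\text{GL}(\mathbb{Z}^{|V(\Gamma)|})$, and since $\ker P$ is already known to be abelian they generate a free abelian subgroup of the appropriate rank; the map $\ker P \to \text{GL}(A_\Gamma^{\text{ab}})$ restricted to $\ker P$ is then injective on this subgroup. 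The main obstacle I anticipate is step (iii) above, handling partial conjugations: showing that a nontrivial partial conjugation cannot lie in $\ker P$ requires using the hypothesis of two distinct maximal classes in an essential way, because one must find, for any partial conjugation by $w$ on a component $C$ of $\Gamma\smallsetminus st(w)$, a maximal $[v]$ such that $lk[v]$ sees the conjugation in a nontrivial way — which is exactly where connectedness and the two-maximal-class hypothesis come in, ruling out the possibility that $C$ and $st(w)$ conspire to hide the conjugation from every $P_v$.
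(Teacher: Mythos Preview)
The paper does not prove this proposition; it simply quotes it from Charney--Vogtmann \cite[Theorem 4.2]{CV09}. So there is no ``paper's own proof'' to compare against. Your outline, however, has a structural gap that is worth flagging.

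The central issue is at the very first step. Knowing a generating set for $\text{Out}^0(A_\Gamma)$ and checking which of those generators lie in $\ker P$ does \emph{not} give you a generating set for $\ker P$. Kernels of homomorphisms are normal subgroups, and an element of $\ker P$ is typically a word in Laurence generators none of which individually lies in $\ker P$. So steps (i)--(iii), even if carried out correctly, would only identify the intersection of $\ker P$ with the Laurence generating set, not $\ker P$ itself. Charney and Vogtmann's actual argument does not proceed this way: they analyse an arbitrary element of $\ker P$ directly, using that its restriction to each $A_{st[v]}$ projects to an inner automorphism of $A_{lk[v]}$, and deduce structural constraints from that.

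There is also an internal inconsistency in your treatment of partial conjugations: in (iii) you assert they ``map trivially through every $P_v$'' (hence lie in $\ker P$), but in your final paragraph you say the obstacle is ``showing that a nontrivial partial conjugation cannot lie in $\ker P$.'' These cannot both be the goal. And whichever way it resolves, it interacts badly with your freeness argument: partial conjugations act trivially on the abelianisation $A_\Gamma^{\text{ab}}$, so if any of them (or any nontrivial product involving them) lies in $\ker P$, the map $\ker P \to \mathrm{GL}(A_\Gamma^{\text{ab}})$ is not injective and your proof that $\ker P$ is \emph{free} abelian collapses.
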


\begin{prop} (Charney--Vogtmann \cite[Proposition 4.4]{CV09}) \label{single-maximal}
If $\Gamma$ is a connected graph that contains a single equivalence class $[v]$ of maximal vertices, then $A_{[v]}$ is abelian, and there is a surjective morphism
\begin{displaymath}
\text{Out}(A_{\Gamma})\to GL(A_{[v]})\times\text{Out}(A_{lk([v])}),
\end{displaymath}

\noindent whose kernel is a free abelian subgroup of $\text{Out}(A_{\Gamma})$.
\end{prop}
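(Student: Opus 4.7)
My plan is to establish the three assertions of the proposition in turn: that $A_{[v]}$ is abelian, that the stated homomorphism exists and is surjective, and that its kernel is free abelian.

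\textbf{Abelianness of $A_{[v]}$.} By Servatius' classical dichotomy for equivalence classes of vertices, $[v]$ either spans a clique in $\Gamma$ (in which case $A_{[v]}\cong\mathbb{Z}^{|[v]|}$) or an anticlique (in which case $A_{[v]}$ is free of rank $|[v]|$). I would rule out the second case by a path argument. If $v_1,v_2\in[v]$ were non-adjacent, connectedness of $\Gamma$ provides a path $v_1=u_0,u_1,\dots,u_m=v_2$ which, after truncating, has no interior vertex in $[v]$. Maximality of $[v]$ forces $u_i<v_1$, hence $lk(u_i)\subseteq st(v_1)$, for each $1\le i\le m-1$. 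An induction on $i$ then yields $u_{i+1}\in st(v_1)$ for all $i$, so $v_2$ is adjacent to $v_1$, a contradiction.

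\textbf{Construction and surjectivity of the morphism.} The factor to $\text{Out}(A_{lk[v]})$ is the projection morphism $P_v$ already defined in the paper. For the factor to $GL(A_{[v]})$, I would first note that the conjugacy class of $A_{[v]}$ is preserved by every element of $\text{Out}(A_\Gamma)$, since graph automorphisms permute $\le$-equivalence classes and hence fix the unique maximal one, and each Laurence generator visibly normalizes $A_{[v]}$ up to conjugation. Choosing representatives that stabilize $A_{[v]}$ setwise and restricting yields a homomorphism to $GL(A_{[v]})$; well-definedness modulo inner automorphisms follows from the fact that two such representatives differ by conjugation by an element centralizing $A_{[v]}$. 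Surjectivity onto $GL(A_{[v]})$ uses the elementary matrices realized by transvections $v_i\mapsto v_iv_j$ with $v_i,v_j\in[v]$, together with diagonal $\pm 1$ factors from inversions; surjectivity onto $\text{Out}(A_{lk[v]})$ is inherited from the surjectivity of $P_v$.

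\textbf{Structure of the kernel.} Any element of the kernel acts trivially on both $A_{[v]}$ and $A_{lk[v]}$ modulo inner automorphisms. Using Laurence's generating set, I expect the kernel to be generated by the transvections $u\mapsto uw$ with $u\notin[v]$ and $w\in[v]$ (permitted since $u<v$ by maximality), together with partial conjugations by vertices $w\in[v]$ acting on components of $\Gamma\smallsetminus st(w)$. Each such generator modifies a fixed symbol, or the letters appearing in a specified component, by left-multiplication by an element of the abelian group $A_{[v]}$; two such operations acting on disjoint symbols commute because $A_{[v]}$ is abelian, yielding an embedding of the kernel into a finite direct sum of copies of $A_{[v]}\cong\mathbb{Z}^k$.

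The main obstacle I anticipate is the kernel step: careful bookkeeping is needed to confirm that the generators, subject to Laurence-type peak-reduction relations, genuinely assemble into a free abelian group rather than one with unexpected extra relations or torsion, and that partial conjugations by distinct $w\in[v]$ interact with the transvections in the expected linear way. The first step is elementary once Servatius' dichotomy is invoked, and the construction and surjectivity step largely reuses the restriction/projection machinery already developed in \cite{CV09}.
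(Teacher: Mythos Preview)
The paper does not contain a proof of this proposition: it is quoted from \cite[Proposition 4.4]{CV09} and used as a black box in the proof of Theorem~\ref{tits-raag}. There is therefore no argument in the present paper to compare your proposal against.

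For what it is worth, your outline follows the same architecture as the original Charney--Vogtmann proof. Two small cautions. First, in your abelianness argument the crucial input is not merely that $[v]$ is maximal but that it is the \emph{unique} maximal class; this is what guarantees $u<v$ for every $u\notin[v]$, which your induction step uses. Second, you invoke $P_v$ for the factor mapping to $\text{Out}(A_{lk[v]})$ and assert that surjectivity is ``inherited from the surjectivity of $P_v$'', but nothing in this paper (nor in the bare definition of $P_v$ recalled here) asserts that $P_v$ is surjective in general. In \cite{CV09} one first shows $st[v]=\Gamma$ under the single-maximal-class hypothesis, which makes the restriction map $R_v$ the identity and reduces $P_v$ to the exclusion map $E_v$; surjectivity is then checked on Laurence generators. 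Your identification of the kernel step as the delicate one is accurate.
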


\begin{proof} [Proof of Theorem \ref{tits-raag}]
The proof is by induction on the number of vertices of $\Gamma$. The case of a graph having a single vertex is obvious. Thanks to Theorem \ref{Tits} and the description of the Grushko decomopsition of $A_{\Gamma}$, we can assume that $\Gamma$ is connected. Let $v$ be a maximal vertex of $\Gamma$. As $lk_{[v]}$ has stricly fewer vertices than $\Gamma$, it follows from the induction hypothesis that $\text{Out}(A_{lk[v]})$ satisfies the Tits alternative, and so does $\text{Out}^0(A_{lk[v]})$. If $\Gamma$ contains a single equivalence class of maximal vertices, then it follows from Proposition \ref{single-maximal}, and from Tits' original version of the alternative for linear groups \cite{Tit72}, that $\text{Out}(A_{\Gamma})$ satisfies the Tits alternative. If $\Gamma$ contains at least two equivalence classes of maximal vertices, then it follows from Proposition \ref{amalgamated-projection} that $\text{Out}(A_{\Gamma})$ satisfies the Tits alternative. 
\end{proof}

\subsection{Outer automorphisms of relatively hyperbolic groups}

Let $G$ be a group, and $\mathcal{P}$ be a finite collection of subgroups of $G$. Following Bowditch \cite{Bow12} (see \cite{Hru10,Osi06} for equivalent definitions), we say that $G$ is \emph{hyperbolic relative to $\mathcal{P}$} if $G$ admits a simplicial action on a connected graph $\mathcal{K}$ such that

\begin{itemize}
\item the graph $\mathcal{K}$ is Gromov hyperbolic, and for all $n\in\mathbb{N}$, every edge of $\mathcal{K}$ is contained in finitely many simple circuits of length $n$, and
\item the edge stabilizers for the action of $G$ on $\mathcal{K}$ are finite, and there are finitely many orbits of edges, and
\item the set $\mathcal{P}$ is a set of representatives of the conjugacy classes of the infinite vertex stabilizers.
\end{itemize}

\begin{theo}\label{tits-rh}
Let $G$ be a torsion-free group, which is hyperbolic relative to a finite family $\mathcal{P}$ of finitely generated subgroups. Let $\mathcal{C}$ be a collection of groups that is stable under isomorphisms, contains $\mathbb{Z}$, and is stable under subgroups, extensions, and passing to finite index supergroups. Assume that for all $H\in\mathcal{P}$, both $H$ and $\text{Out}(H)$ satisfy the Tits alternative relative to $\mathcal{C}$.\\ Then $\text{Out}(G,\mathcal{P})$ satisfies the Tits alternative relative to $\mathcal{C}$.
\end{theo}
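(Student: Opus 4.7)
The plan is to combine Theorem \ref{Tits-2} with Guirardel and Levitt's structural description \cite{GL14} of the outer automorphism group of a relatively hyperbolic group that is freely indecomposable relative to its parabolic structure. The argument has two stages: a free product reduction, and a JSJ-based analysis in the freely indecomposable case.

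First I would take the Grushko decomposition of $G$ relative to $\mathcal{P}$: write $G = G_1 \ast \cdots \ast G_k \ast F_N$ where each $G_i$ is freely indecomposable relative to the subset $\mathcal{P}_i \subseteq \mathcal{P}$ of parabolics (up to conjugation) contained in $G_i$, no $G_i$ is infinite cyclic, and $F_N$ is a finitely generated free group. Setting $\mathcal{F} := \{[G_1],\dots,[G_k]\}$, the factors are their own normalizers by malnormality in a free product, so a finite-index subgroup of $\text{Out}(G,\mathcal{P})$ embeds into $\text{Out}(G,\mathcal{F}^{\mathcal{A}})$ with $A_i := \text{Out}(G_i,\mathcal{P}_i)$. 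By stability of $\mathcal{C}$ under finite-index supergroups and by Theorem \ref{Tits-2}, it suffices to show that for every $i$, both $G_i$ and $\text{Out}(G_i,\mathcal{P}_i)$ satisfy the Tits alternative relative to $\mathcal{C}$. For $G_i$ itself, note that $G_i$ is again torsion-free and hyperbolic relative to $\mathcal{P}_i$, so by the classical fact that a relatively hyperbolic group whose parabolics lie in (or satisfy the Tits alternative relative to) a suitable class again satisfies this Tits alternative, and using the hypothesis on the groups in $\mathcal{P}$, we get the statement for $G_i$.

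The heart of the proof is then to handle $\text{Out}(G_i,\mathcal{P}_i)$ in the freely indecomposable case, using the main result of \cite{GL14}. Guirardel and Levitt produce a canonical JSJ decomposition of $G_i$ over elementary (cyclic or parabolic) subgroups relative to $\mathcal{P}_i$, and show that a finite-index subgroup $\text{Out}^{0}(G_i,\mathcal{P}_i)$ fits into an exact sequence
\begin{equation*}
1 \longrightarrow T \longrightarrow \text{Out}^{0}(G_i,\mathcal{P}_i) \longrightarrow \prod_{v \in V_{\mathrm{surf}}} \mathrm{MCG}(\Sigma_v) \times \prod_{v \in V_{\mathrm{par}}} \text{Out}(P_v,\mathrm{Inc}_v) \longrightarrow 1,
\end{equation*}
where $\Sigma_v$ is the $2$-orbifold underlying a surface-type vertex, $P_v$ is a parabolic vertex group with its incident edge system $\mathrm{Inc}_v$, and $T$ is a group of generalized twists. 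Rigid (non-surface, non-parabolic) vertex groups contribute only finite quotients and may be absorbed into the finite-index reduction. Each mapping class group satisfies the Tits alternative by Ivanov--McCarthy \cite{Iva84,Iva92,McC85}, and since $\mathcal{C}$ contains $\mathbb{Z}$ and is stable under extensions and finite-index supergroups it contains all virtually abelian groups, so mapping class groups satisfy the Tits alternative relative to $\mathcal{C}$. The group $\text{Out}(P_v,\mathrm{Inc}_v)$ is a subgroup of $\text{Out}(P_v)$, which satisfies the Tits alternative relative to $\mathcal{C}$ by hypothesis; stability of $\mathcal{C}$ under subgroups transfers the property. Finally, the twist group $T$ is an iterated extension of centralizers of edge stabilizers, each cyclic or contained in a parabolic $P_v$; by closure of $\mathcal{C}$ under subgroups and extensions the group $T$ itself lies in $\mathcal{C}$. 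Stability of the Tits alternative relative to $\mathcal{C}$ under extensions then yields the conclusion for $\text{Out}^{0}(G_i,\mathcal{P}_i)$, and hence for $\text{Out}(G_i,\mathcal{P}_i)$.

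The main obstacle is extracting the precise form of the Guirardel--Levitt exact sequence in the present relative setting and verifying that each of its pieces fits the hypotheses of $\mathcal{C}$: in particular one has to check that the twists $T$ are built only from parabolic and cyclic pieces (so that $T \in \mathcal{C}$) and that restricting from $\text{Out}(P_v)$ to the subgroup $\text{Out}(P_v,\mathrm{Inc}_v)$ does not destroy the Tits alternative relative to $\mathcal{C}$, which reduces to the stability of $\mathcal{C}$ under passing to subgroups. Once these bookkeeping points are settled, the logical structure is exactly the same as in the proof of Theorem \ref{Tits}: reduce to freely indecomposable pieces via Theorem \ref{Tits-2}, then conclude by dissecting the automorphism group of each freely indecomposable piece into factors that individually satisfy the desired alternative.
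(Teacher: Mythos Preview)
Your proposal is correct and follows essentially the same approach as the paper: reduce to the freely indecomposable case via the relative Grushko decomposition and the main theorem of the paper, then apply Guirardel--Levitt's structure theorem together with the Tits alternative for mapping class groups. The only cosmetic difference is that the paper quotes Guirardel--Levitt in the form of Theorem~\ref{out-relhyp}, where the kernel $\mathcal{T}$ is already finitely generated free abelian and the target is $\prod\mathrm{MCG}(\Sigma_i)\times\prod_{H\in\mathcal{P}}\text{Out}(H)$, so no separate analysis of twist groups or incident-edge subgroups is needed.
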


\begin{proof}
The peripheral subgroups $G_i$ arising in the Grushko decomposition of $G$ relative to $\mathcal{P}$ (see \cite{GL10} for a definition of the relative Grushko decomposition) are torsion-free, freely indecomposable relative to $\mathcal{P}_{G_i}$ (i.e. they do not split as a free product in which all subgroups in $\mathcal{P}_{G_i}$ are conjugate into one of the factors), and hyperbolic relative to $\mathcal{P}_{G_i}$. Each subgroup $G_i$ satisfies the Tits alternative relative to $\mathcal{C}$ as soon as all groups in $\mathcal{P}$ do (this follows from \cite{Gro87}). Our main result (Theorem \ref{Tits}) therefore enables us to reduce to the case where $G$ is freely indecomposable relative to $\mathcal{P}$. In this case, we can use the description of $\text{Out}(G,\mathcal{P})$ stated below, which is due to Guirardel and Levitt. Since the Tits alternative holds for mapping class groups of compact surfaces (Ivanov \cite{Iva84}, McCarthy \cite{McC85}), we deduce the Tits alternative for $\text{Out}(G,\mathcal{P})$.
\end{proof}

\begin{theo} (Guirardel--Levitt \cite[Theorem 1.4]{GL14}) \label{out-relhyp}
Let $G$ be a torsion-free group, which is hyperbolic relative to a finite family $\mathcal{P}$ of finitely generated subgroups, and freely indecomposable relative to $\mathcal{P}$. Then some finite index subgroup $\text{Out}^0(G,\mathcal{P})$ of $\text{Out}(G,\mathcal{P})$ fits in an exact sequence
\begin{displaymath}
1\to\mathcal{T}\to\text{Out}^0(G,\mathcal{P})\to\prod_{i=1}^p MCG(\Sigma_i)\times\prod_{H\in\mathcal{P}}\text{Out}(H),
\end{displaymath}

\noindent where $\mathcal{T}$ is finitely generated free abelian, and $MCG(\Sigma_i)$ is the mapping class group of a compact surface $\Sigma_i$. 
\end{theo}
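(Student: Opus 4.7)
The plan is to exploit the canonical JSJ decomposition of $(G,\mathcal{P})$ over the class of (maximal) cyclic subgroups, viewing all parabolic subgroups as elliptic. Since $G$ is torsion-free, hyperbolic relative to $\mathcal{P}$, and freely indecomposable relative to $\mathcal{P}$, such a canonical JSJ tree exists by the work of Bowditch and the JSJ theory of Guirardel and Levitt. Its vertex groups fall into three families: the peripheral subgroups in $\mathcal{P}$, \emph{quadratically hanging} (QH) vertex groups whose structure is that of the fundamental group of a compact $2$-orbifold $\Sigma_i$ (with boundary components conjugate into adjacent edge groups), and \emph{rigid} vertex groups, which by definition admit no nontrivial modular deformation relative to their incident edges and the peripheral structure.

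First, because the JSJ is canonical (preserved up to conjugation by all of $\text{Out}(G,\mathcal{P})$), the action of $\text{Out}(G,\mathcal{P})$ on the finite underlying graph of groups factors through a finite group. I pass to the finite index subgroup $\text{Out}^0(G,\mathcal{P})$ acting trivially on this graph and preserving each vertex and edge group up to conjugation. Since each vertex group is malnormal in $G$ relative to the splitting, each element of $\text{Out}^0(G,\mathcal{P})$ induces a well-defined outer automorphism on every vertex group. This gives a restriction morphism
\[
\rho\colon\text{Out}^0(G,\mathcal{P})\to\prod_{v\text{ rigid}}\text{Out}(G_v,\text{inc})\times\prod_{i=1}^p MCG(\Sigma_i)\times\prod_{H\in\mathcal{P}}\text{Out}(H),
\]
where $\text{Out}(G_v,\text{inc})$ denotes outer automorphisms preserving the conjugacy classes of incident edge groups. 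Rigidity forces this factor to be trivial (modulo passing again to finite index), so rigid vertices contribute nothing to the image. On QH vertices, preserving the peripheral structure and acting by an automorphism of the orbifold group is exactly the data of an element of $MCG(\Sigma_i)$, giving the surface factors. The parabolic factor is immediate.

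Second, I identify the kernel $\mathcal{T}$ of $\rho$. An element of $\mathcal{T}$ is an outer automorphism of $G$ that restricts to an inner automorphism on every vertex group of the JSJ. By the general deformation theory of graphs of groups (Bass--Jiang, Levitt), such automorphisms are generated by \emph{Dehn twists} supported on the edges of the JSJ: for each edge $e$ with cyclic stabilizer $\langle z_e\rangle$, one twists the adjacent vertex group by a power of $z_e$. Because the JSJ graph is finite, the group of such twists is finitely generated. Two twists along different edges commute up to an inner automorphism, and a single twist along an edge with infinite cyclic stabilizer has infinite order in $\text{Out}$; since $G$ is torsion-free the resulting abelian group has no torsion. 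Hence $\mathcal{T}$ is finitely generated free abelian.

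The main obstacle will be the construction of the canonical cyclic JSJ decomposition in the \emph{relative} setting (trees that are invariant under $\text{Out}(G,\mathcal{P})$ and in which every parabolic is elliptic), together with verifying the crucial rigidity property of rigid vertex groups: any outer automorphism preserving a rigid vertex group and its incident edge groups must be inner. In the relatively hyperbolic, torsion-free case this follows from the combination of Bowditch's analysis of peripheral splittings with the relative JSJ machinery of Guirardel--Levitt, but it is by far the most delicate ingredient and the reason the theorem is hard. Once this is in place, the surface factor identification and the Dehn twist analysis of the kernel are comparatively routine.
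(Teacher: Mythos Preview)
The paper does not prove this theorem: it is quoted as a black box from Guirardel--Levitt \cite[Theorem~1.4]{GL14} and invoked in the proof of Theorem~\ref{tits-rh}. There is therefore no proof in the paper to compare your proposal against.

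Your outline is, in broad strokes, the argument Guirardel and Levitt actually give, but several points are stated imprecisely. The canonical splitting you need is the tree of cylinders of the JSJ deformation space over elementary (here: cyclic) subgroups relative to $\mathcal{P}$; its $\text{Out}(G,\mathcal{P})$-invariance is a nontrivial output of that construction, not of Bowditch's peripheral splitting. The vertex groups do \emph{not} come in three flavours ``parabolic, QH, rigid'': they are QH or rigid, and each parabolic subgroup is elliptic, possibly sitting properly inside a rigid vertex group. The map to $\prod_{H\in\mathcal{P}}\text{Out}(H)$ is therefore not a vertex-restriction map; it comes directly from malnormality of parabolics in $G$. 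For rigid vertices, the crucial input is that $\text{Out}(G_v,\text{inc})$ is \emph{finite} (via a Bestvina--Paulin/Rips argument using that $G_v$ is itself relatively hyperbolic), not that it is trivial by definition of rigidity; this finiteness is exactly what one absorbs when passing to $\text{Out}^0$. Finally, identifying the kernel with the group of twists is indeed Levitt's analysis, but showing that the twist group is \emph{free} abelian (and not merely finitely generated abelian) uses that edge groups are infinite cyclic together with malnormality properties of vertex groups; your sentence ``two twists along different edges commute up to an inner automorphism'' is not the right formulation, and the absence of relations among twists requires an argument.
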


When the parabolic subgroups are virtually polycyclic, we get the following result.

\begin{theo} \label{tits-relhyp}
Let $G$ be a torsion-free group, which is hyperbolic relative to a finite family of virtually polycyclic subgroups. Then $\text{Out}(G)$ satisfies the Tits alternative relative to the class of virtually polycyclic groups.
\end{theo}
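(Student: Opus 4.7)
The strategy is to apply Theorem~\ref{tits-rh} to $G$ with $\mathcal{C}$ equal to the class of virtually polycyclic groups, and then to pass from $\text{Out}(G,\mathcal{P})$ to $\text{Out}(G)$ by a finite-index argument. The first task is to verify that $\mathcal{C}$ satisfies the closure hypotheses of Theorem~\ref{tits-rh}: it is obviously stable under isomorphisms, contains $\mathbb{Z}$, and is stable under subgroups and under finite-index supergroups (a finite-index overgroup of a group containing a finite-index polycyclic subgroup still contains one). Stability under extensions follows from the fact that if both $N$ and $Q$ are virtually polycyclic, then any extension $1\to N\to E\to Q\to 1$ is also virtually polycyclic, as can be seen by pulling back finite-index polycyclic subgroups of $N$ and $Q$ and using that extensions of polycyclic groups by polycyclic groups are polycyclic.

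Next, I verify that every parabolic subgroup $H\in\mathcal{P}$ and its outer automorphism group satisfy the Tits alternative relative to $\mathcal{C}$. For $H$ itself the statement is trivial, since every subgroup of a virtually polycyclic group is virtually polycyclic. For $\text{Out}(H)$, I invoke a classical result due to Auslander (see for instance Segal's book on polycyclic groups): when $H$ is virtually polycyclic, $\text{Out}(H)$ has a finite-index subgroup that embeds in $GL_n(\mathbb{Z})$ for some $n$. Tits' original alternative \cite{Tit72} applied to this linear subgroup shows that each of its subgroups either contains a rank-two free subgroup or is virtually solvable; and by a theorem of Mal'cev, every virtually solvable subgroup of $GL_n(\mathbb{Z})$ is virtually polycyclic. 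Since $\mathcal{C}$ is stable under finite-index supergroups, the Tits alternative relative to $\mathcal{C}$ then transfers to $\text{Out}(H)$ itself.

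Theorem~\ref{tits-rh} therefore yields that $\text{Out}(G,\mathcal{P})$ satisfies the Tits alternative relative to $\mathcal{C}$. To finish, I observe that $\text{Out}(G,\mathcal{P})$ has finite index in $\text{Out}(G)$: in a torsion-free relatively hyperbolic group the maximal parabolic subgroups are intrinsically characterized (for instance as the maximal subgroups containing no loxodromic element for the action on Bowditch's graph $\mathcal{K}$, or equivalently as the stabilizers of parabolic points on the Bowditch boundary), so every element of $\text{Aut}(G)$ permutes their finitely many conjugacy classes, and a finite-index subgroup of $\text{Out}(G)$ fixes each such class. One more application of stability of $\mathcal{C}$ under finite-index supergroups concludes the proof. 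The only step that requires genuine input beyond the formal machinery is the combination of Auslander's linearity result with Mal'cev's theorem: it is essential to work with the class of virtually polycyclic groups rather than the coarser class of virtually solvable groups, precisely because only the former has the extension-closure property needed to invoke Theorem~\ref{tits-rh}.
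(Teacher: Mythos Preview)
Your argument follows the same route as the paper's: verify that virtually polycyclic groups form an admissible class $\mathcal{C}$, use Auslander's linearity result together with Tits' alternative and Mal'cev's theorem to handle $\text{Out}(H)$ for each parabolic $H$, apply Theorem~\ref{tits-rh}, and then pass to $\text{Out}(G)$ by a finite-index argument.

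There is one genuine gap in the finite-index step. You assert that the maximal parabolic subgroups are ``intrinsically characterized'' as maximal subgroups containing no loxodromic element for the action on $\mathcal{K}$, or as stabilizers of parabolic points on the Bowditch boundary. But both descriptions depend on the chosen peripheral structure $\mathcal{P}$, not just on $G$: an automorphism $\varphi$ of $G$ produces a new relatively hyperbolic structure with peripheral family $\varphi(\mathcal{P})$, a new graph $\mathcal{K}$, and a new boundary, and nothing you have said forces $\varphi(\mathcal{P})$ to coincide with $\mathcal{P}$ up to conjugacy. Concretely, if some $H\in\mathcal{P}$ is infinite cyclic (the torsion-free virtually cyclic case), its conjugacy class need not be $\text{Aut}(G)$-invariant, just as in a free group no cyclic subgroup is canonical. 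The paper closes this gap by first discarding any virtually cyclic members of $\mathcal{P}$ (which is harmless, since $G$ remains relatively hyperbolic) and then invoking a genuinely group-theoretic characterization: the remaining parabolics are exactly the maximal subgroups that are not virtually cyclic and contain no rank-two free subgroup. With that adjustment your proof is complete and matches the paper's.
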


\begin{proof} 
We first recall that the outer automorphism group $\text{Out}(P)$ of a virtually polycyclic group $P$ satisfies the Tits alternative relative to the class of virtually polycyclic groups. Indeed, a theorem of Auslander \cite{Aus67} establishes that $\text{Out}(P)$ embeds as a subgroup of $SL_N(\mathbb{Z})$ for some $N\in\mathbb{N}$. Tits' original statement of the Tits alternative \cite{Tit72} implies that $\text{Out}(P)$ satisfies the Tits alternative relative to the class of virtually solvable groups (every linear group over a field of characteristic $0$, finitely generated or not, satisfies the Tits alternative). In addition, a theorem of Mal'cev states that solvable subgroups of $SL_N(\mathbb{Z})$ are polycyclic \cite{Mal51}. Hence $\text{Out}(P)$ satisfies the Tits alternative relative to the class of virtually polycyclic groups. 

Denote by $\mathcal{P}$ the collection of parabolic subgroups. We can assume that $\mathcal{P}$ does not contain any virtually cyclic subgroup. Then every element of $\text{Out}(G)$ induces a permutation of the conjugacy classes of the subgroups in $\mathcal{P}$. Indeed, subgroups in $\mathcal{P}$ can be characterized as the maximal subgroups which do not contain a free subgroup of rank $2$, and are not virtually cyclic. Therefore, the group $\text{Out}(G,\mathcal{P})$ is a finite index subgroup of $\text{Out}(G)$. Theorem \ref{tits-relhyp} thus follows from Theorem \ref{tits-rh}.
\end{proof}

\bibliographystyle{amsplain}
\bibliography{/Users/Camille/Documents/Bibliographie}

\end{document}